  \newunit\newblock{}
  \newunit\newblock{}
\newtheorem{theorem}{Theorem}[section]
\newtheorem{lemma}[theorem]{Lemma}
\newtheorem{corollary}[theorem]{Corollary}
\theoremstyle{definition}
\newtheorem{definition}[theorem]{Definition}
\theoremstyle{remark}
\newtheorem{example}[theorem]{Example}
\newtheorem{reduction}{Reduction}
\renewcommand{\hat}{\widehat}
\DeclareMathOperator{\GL}{GL}
\DeclareMathOperator\id{id}
\DeclareMathOperator\tr{trace}
\DeclareMathOperator{\kron}{Kr}
\DeclareMathOperator{\conj}{Conj}
\DeclareMathOperator{\rconj}{RConj}
\DeclareMathOperator{\St}{St}
\DeclareMathOperator{\Stab}{Stab}
\newcommand\CC{\mathbf{C}}
\newcommand\Fq{\mathbf{F}_q}
\newcommand\ii{\mathbf{i}}
\newcommand\jj{\mathbf{j}}
\newcommand\kk{\mathbf{k}}
\newcommand\gen[1]{\langle#1\rangle}
\theoremstyle{remark}
\newtheorem{remark}[theorem]{Remark}
\numberwithin{equation}{section}
\newcommand{\abs}[1]{\lvert#1\rvert}
\title[Kronecker coefficients and simultaneous conjugacy classes]{Kronecker Coefficients and\\Simultaneous Conjugacy Classes}
\dedicatory{Dedicated to the memory of A.~P.~Balachandran.}
\author[Ganguly, Paul, Prasad, Raghavan, Velmurugan]{Jyotirmoy Ganguly$^1$, Digjoy Paul$^2$\\Amritanshu Prasad$^{3,5}$, K N Raghavan$^4$, Velmurugan S$^{3,5}$}
\address{$^1$GITAM University, Bengaluru, India.\\$^2$Indian Institute of Science, Bengaluru, India.\\$^3$The Institute of Mathematical Sciences, Chennai, India.\\$^4$Krea University, Sri City, India.\\$^5$Homi Bhabha National Institute, Mumbai, India.\\}
\begin{document}
\begin{abstract}
	A Kronecker coefficient is the multiplicity of an irreducible representation of a finite group $G$ in a tensor product of irreducible representations.
	We define Kronecker Hecke algebras and use them as a tool to study Kronecker coefficients in finite groups.
	We show that the number of simultaneous conjugacy classes in a finite group $G$ is equal to the sum of squares of Kronecker coefficients, and the number of simultaneous conjugacy classes that are closed under elementwise inversion is the sum of Kronecker coefficients weighted by Frobenius-Schur indicators.
	We use these tools to investigate which finite groups have multiplicity-free tensor products.
	We introduce the class of doubly real groups, and show that they are precisely the real groups which have multiplicity-free tensor products.
	We show that non-Abelian groups of odd order, non-Abelian finite simple groups, and most finite general linear groups do not have multiplicity-free tensor products.
\end{abstract}
\maketitle
\section{Introduction}\label{section:introduction}
Let $G$ be a finite group.
In this article all representations are assumed to be over the complex numbers.
Given irreducible representations $U_1,\dotsc,U_{d+1}$ of $G$, define the \emph{generalized Kronecker coefficient} $\kappa(U_1,\dotsc,U_{d+1})$ to be the multiplicity of the trivial representation of $G$ in $U_1\otimes \dotsb \otimes U_{d+1}$.
If $W'$ denotes the contragredient of a representation $W$ of $G$, $\kappa(U,V,W')$ is the multiplicity of $W$ in $U\otimes V$.
Known by the name of Clebsch-Gordan coefficients in the context of Lie groups, these multiplicities were studied by Murnaghan~\cite{MR1507347} for the symmetric group, who called them Kronecker coefficients.

Let $G$ act on $G^d$ by simultaneous conjugation:
\begin{equation}\label{eq:simultaneous-conjugation}
	g\cdot (g_1,\dotsc,g_d) = (gg_1g^{-1},\dotsc,gg_dg^{-1}).
\end{equation}
The orbits of this action are called \emph{simultaneous conjugacy classes}.
A simultaneous conjugacy class in $G^d$ is said to be \emph{real} if it is closed under componentwise inversion.
Denote the set of simultaneous conjugacy classes in $G^d$ by $\conj_d(G)$ and the set of real simultaneous conjugacy classes by $\rconj_d(G)$.

In this article, we show that the number of simultaneous conjugacy classes in $G^d$ is related to the Kronecker coefficients of $G$.
\begin{align}\label{eq:A}\tag{A}
	|\conj_d(G)| & = \sum_{(V_1,\dotsc,V_{d+1})} \kappa(V_1,\dotsc,V_{d+1})^2,\\\tag{B}\label{eq:B}
	|\rconj_d(G)| & = \sum_{(V_1,\dotsc,V_{d+1})} \sigma(V_1)\dotsb \sigma(V_{d+1})\kappa(V_1,\dotsc,V_{d+1}).
\end{align}
The sums are over all $(d+1)$-tuples of irreducible representations of $G$, and for an irreducible representation $U$ of $G$, $\sigma(U)$ denotes its Frobenius-Schur indicator.
In the context of symmetric groups, these results were discovered by Ben Geloun and Ramgoolam~\cite{MR4591600}.
Eq.~\eqref{eq:B} incorporates subtleties that arise from the Frobenius-Schur indicator absent in the case of symmetric groups.

The main tool is the Kronecker-Hecke algebra $\kron_d(G)$ of $G$, introduced in Section~\ref{section:kronecker-hecke-algebra},
which is a group Hecke algebra of $G^{d+1}$ with respect to the diagonal subgroup $\Delta G$.
Indeed, Eq.~\eqref{eq:A} is a consequence of the Wedderburn decomposition of $\kron_d(G)$ (Theorem~\ref{theorem:sum-of-squares}).
Eq.~\eqref{eq:B} is an application of a theorem of Frame on Hecke algebras (see Theorem~\ref{theorem:trace-dagger-kron}).

Given a group $G$ and a subgroup $K$, $(G,K)$ is called a \emph{Gelfand pair} if the Hecke algebra $H(G,K)$ is commutative.
Sometimes, this commutativity follows from Gelfand's trick: if $KgK = Kg^{-1}K$ for all $g\in G$, then $H(G,K)$ is commutative.
In this case $(G,K)$ is called a \emph{symmetric Gelfand pair}~\cite[Section~4.3]{MR2389056}.
Theorem~\ref{theorem:easy-gelfand} is a result of independent interest: we show that $(G,K)$ is a symmetric Gelfand pair if and only if the space $V^K$ of $K$-invariant vectors is at most one-dimensional for every irreducible representation $V$ of $G$, and if $V^K\neq 0$, then the Frobenius-Schur indicator $\sigma(V)=1$.

In the context of Kronecker-Hecke algebras, $(G^{d+1},\Delta G)$ is a Gelfand pair if and only if it has multiplicity-free $d$-fold tensor products (Theorem~\ref{theorem:commutativity-kronecker-hecke}), i.e.,
\begin{equation}\label{eq:commutativity-kronecker-hecke}
	\kappa(V_1,\dotsc,V_{d+1})\leq 1 \text{ for all }(V_1,\dotsc,V_{d+1})\in \hat G^{d+1}.
\end{equation}
On the other hand, $(G^{d+1}, \Delta G)$ is a symmetric Gelfand pair if and only if $G$ is $d$-real (Theorem~\ref{theorem:commutativity-kronecker-hecke}), which means that every $d$-fold simultaneous conjugacy class in $G^d$ contains its coordinatewise inverse (Definition~\ref{definition:d-real}).

Burnside's lemma gives a formula~\eqref{eq:burnside} for the number of simultaneous conjugacy classes in $G^d$.
Eq.~\eqref{eq:B} gives rise to a formula (Theorem~\ref{theorem:rational-gf}) for the number of real simultaneous conjugacy classes in $G^d$:
\begin{equation}\label{eq:C}\tag{C}
	|\rconj_d(G)| = \frac 1{|G|}\sum_{g\in G} r(g)^{d+1},	
\end{equation}
where $r(g)$ is the number of square roots of $g$ in $G$.
For example, for the monster group $M$, one easily computes $|\rconj_2(M)|=240440865730496103575552476238$ using its character table in GAP.

When the point $g$ is chosen uniformly at random from the finite group $G$, the number $r(g)$ of square roots of $g$ in $G$ is a random variable.
The formula~\eqref{eq:C} has the following probabilistic interpretation:
\begin{quote}
	The number of real simultaneous conjugacy classes in $G^d$ is the $(d+1)$st moment of $r(g)$.
	In particular, for groups $G$ and $H$, $|\rconj_d(G)| = |\rconj_d(H)|$ for all $d\geq 1$ if and only if $r(g)$ has the same probability distribution for $G$ and $H$.
\end{quote}

It turns out that groups for which $d$-fold tensor products are multiplicity-free for $d\geq 3$ are Abelian (Theorem~\ref{theorem:higher-hecke-commutative}).
Hence we focus on the case where $d=2$.
A group has multiplicity-free $2$-fold tensor products if the tensor product $U\otimes V$ of any two irreducible representations $U$ and $V$ is multiplicity-free.
We refer to them as \emph{groups with multiplicity-free tensor products}.

A group is said to be \emph{doubly real} if every pair of elements is simultaneously conjugate to its inverse.
Theorem~\ref{theorem:d-real} implies that a doubly real group has multiplicity-free tensor products. 
The converse of this result holds for real groups: every real group with multiplicity-free tensor products is doubly real (Theorem~\ref{theorem:real-multiplcity-free}).

In Section~\ref{section:doubly-real}, we show that dihedral groups and extraspecial $2$-groups are doubly real.
Generalized quaternion groups are doubly real if and only if they are real.
Heisenberg groups over finite fields are doubly real if and only if the underlying field has order $2$.

In Section~\ref{sec:groups-that-dont} we exhibit several classes of groups that do not have multiplicity-free tensor products.
Non-Abelian groups of odd order do not have multiplicity-free tensor products (Theorem~\ref{theorem:odd-order}).
Symmetric groups $S_n$ for $n\geq 5$ and alternating groups $A_n$ for $n\geq 4$ do not have multiplicity-free tensor products.
In fact, no non-Abelian finite simple group has multiplicity-free tensor products (Theorem~\ref{theorem:finite-simple}).

These findings are summarized in Figure~\ref{fig:summary}, where MFTP stands for groups having multiplicity-free tensor products and NMFTP stands for groups without multiplicity-free tensor products.

\begin{figure}
	\begin{center}
		\scalebox{0.9}{
		\begin{tikzpicture}[font=\footnotesize]
			\path (3,0) node [draw, align=left] (P1) { Groups};
			\path (-3,-1.5) node [draw, align=left] (P21) {MFTP};
			\path (3,-1.5) node [draw, align=left] (P22) {Real};
			\path (9,-1.5) node [draw, align=left] (P23) {NMFTP};
			\path (-4.5,-3) node [draw, align=left] (P31) {Abelian};
			\path (-2.5,-3) node [draw, align=left] (P32) {$Q(A)$};
			\path (0,-3) node [draw, align=left] (P33) {Doubly Real};
			\path (6,-4.2) node [draw, align=left] (P34) {Non-Abelian\\Simple};
			\path (3.9,-6.5) node [draw, align=left] (P35) {Simple Real\\except $C_2$};
			\path (6.7,-6.5) node [draw, align=left] (P37) {Non-Abelian\\ odd order};
			\path (9.1,-6.5) node [draw, align=left] (P38) {$H_n(\mathbb{F}_q)$\\ $q>2$};
			\path (11.2,-6.24) node [draw, align=left] (P50) {$GL_n(\mathbf F_q)$\\except\\$n=q=2$};
			\path (-1.5,-6.5) node [draw, align=left] (P42) {Extraspecial\\ $2$ groups};
			\path (1.3,-6.5) node [draw, align=left] (P43) {Generalized\\Dihedral};
			\path (5.5,-2.8) node [draw, align=left] (P39) {$S_n$\\$n\geq 5$};
			\path (-3.9,-6.5) node [draw, align=left] (P46) {Real \\$Q(A)$};
			\draw
			(P1) edge (P21)
			(P1) edge (P22)
			(P21) edge (P31)
			(P21) edge (P32)
			(P22) edge (P33)
			(P22) edge (P35)
			(P34) edge (P35)
			(P23) edge (P37)
			(P23) edge (P38)
			(P1) edge (P23)
			(P23) edge (P34)
			(P33) edge (P42)
			(P33) edge (P43)
			(P23) edge (P39)
			(P21) edge (P33)
			(P22) edge (P39)
			(P33) edge (P46)
			(P23) edge (P50)
			(P32) edge (P46);
		\end{tikzpicture}
		}
	\end{center}
	\caption{A summary of our findings on groups with multiplicity-free tensor products and doubly real groups.}\label{fig:summary}
\end{figure}
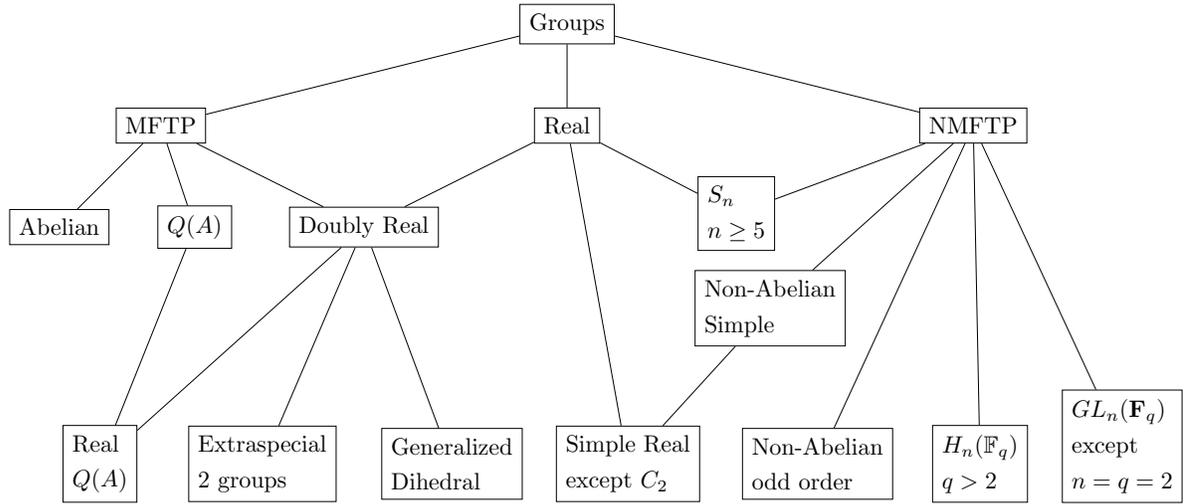

\section{The Kronecker Hecke Algebra}\label{section:kronecker-hecke-algebra}
\subsection{Group Hecke Algebras}
Recall that the group algebra $\CC[G]$ of a finite group $G$ is the complex vector space with basis $\{1_g\mid g\in G\}$ endowed with multiplication defined by bilinearly extending the rule $1_g1_h = 1_{gh}$ for all $g,h\in G$.
For a subgroup $K$ of $G$, let
\begin{displaymath}
	\epsilon_K = \frac 1{\abs K}\sum_{k\in K} 1_k.
\end{displaymath}
Then $\epsilon_K$ is idempotent in $\CC[G]$.
The Hecke algebra of $G$ with respect to $K$ is defined as
\begin{displaymath}
	H(G,K) = \epsilon_K\CC[G]\epsilon_K.
\end{displaymath}

Let $\hat G$ denote a set of representatives of the isomorphism classes of irreducible representations of $G$.
For a representatation $V$ of $G$ let $V'$ denote its contragredient (viewed as a right $\CC[G]$-module).
The Wedderburn decomposition
\begin{equation}\label{eq:wedderburn}
	\CC[G] \cong \bigoplus_{V\in \hat G} V\otimes V',
\end{equation}
is an isomorphism of $(\CC[G], \CC[G])$-bimodules.
Let $V^K=\epsilon_K V$, the subspace of $K$-invariant vectors in $V$.
Eq.~\eqref{eq:wedderburn} gives the Wedderburn decomposition of $H(G,K)$ as
\begin{equation}\label{eq:wedderburn-hecke}
	H(G,K) \cong \bigoplus_{V\in \hat G} V^K\otimes (V^K)'.
\end{equation}
The orbits for the action of $K\times K$ on $G$ by $(k_1,k_2)g = k_1gk_2^{-1}$ are called \emph{double cosets} of $K$ in $G$.
Let $K\backslash G/K$ denote the set of double cosets of $K$ in $G$.
\begin{theorem}\label{theorem:dimension-hecke-algebra}
	Let $G$ be a finite group and $K$ be a subgroup.
	Then
	\begin{equation}\label{eq:dimension-hecke-algebra}
		\sum_{V\in \hat G} \dim {(V^K)}^2 = \abs{K\backslash G/K}.
	\end{equation}
\end{theorem}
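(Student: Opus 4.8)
The plan is to identify both sides of \eqref{eq:dimension-hecke-algebra} with $\dim H(G,K)$. By the Wedderburn decomposition \eqref{eq:wedderburn-hecke}, $\dim H(G,K) = \sum_{V\in\hat G}\dim(V^K)\dim((V^K)') = \sum_{V\in\hat G}\dim(V^K)^2$, so the left-hand side is accounted for. It then remains to show $\dim H(G,K) = |K\backslash G/K|$, which I would do by exhibiting an explicit basis of $H(G,K)$ indexed by the double cosets of $K$ in $G$.

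For $g\in G$, set $e_g = \epsilon_K 1_g \epsilon_K \in H(G,K)$. Expanding the idempotents, $e_g = |K|^{-2}\sum_{k_1,k_2\in K}1_{k_1gk_2}$, so the coefficient of $1_x$ in $e_g$ is $|K|^{-2}$ times the number of pairs $(k_1,k_2)\in K\times K$ with $k_1gk_2 = x$. By the orbit–stabilizer theorem applied to the action of $K\times K$ on $G$ by $(k_1,k_2)\cdot g = k_1gk_2^{-1}$, this count is $|K|^2/|KgK|$ when $x\in KgK$ and $0$ otherwise. Hence $e_g = |KgK|^{-1}\sum_{x\in KgK}1_x$; in particular $e_g$ depends only on the double coset $KgK$, is nonzero, and is supported exactly on $KgK$.

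Since distinct double cosets are disjoint, the elements $\{e_g : KgK\in K\backslash G/K\}$ have pairwise disjoint supports in the basis $\{1_x\}$ of $\CC[G]$ and are therefore linearly independent. They also span $H(G,K)$: every element of $H(G,K)$ has the form $\epsilon_K a\epsilon_K$ with $a = \sum_{g\in G}c_g1_g\in\CC[G]$, and $\epsilon_K a\epsilon_K = \sum_{g\in G}c_g e_g$, which, after grouping the terms by double coset, is a linear combination of the $e_g$. Thus $\{e_g : KgK \in K\backslash G/K\}$ is a basis of $H(G,K)$ and $\dim H(G,K) = |K\backslash G/K|$, which completes the proof.

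There is no serious obstacle here; the one point that needs a little care is the coefficient computation identifying $e_g$ with a positive multiple of the indicator of the double coset $KgK$, since this is what simultaneously yields nonvanishing and the disjoint-support property used for linear independence. (As an alternative to the Hecke-algebra argument, one can proceed representation-theoretically: by Frobenius reciprocity $\dim V^K$ is the multiplicity of $V$ in the permutation representation $\CC[G/K]\cong\Ind_K^G\mathbf 1$, so $\sum_{V\in\hat G}(\dim V^K)^2 = \langle \Ind_K^G\mathbf 1,\Ind_K^G\mathbf 1\rangle$, and the latter inner product counts the $K$-orbits on $G/K$, i.e. the double cosets of $K$ in $G$.)
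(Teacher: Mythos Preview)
Your proof is correct and follows the same approach as the paper: both compute $\dim H(G,K)$ in two ways, using the Wedderburn decomposition~\eqref{eq:wedderburn-hecke} for the left-hand side and the double-coset description of $H(G,K)$ for the right. The paper simply asserts that $H(G,K)$ consists of those $\sum_g a_g 1_g$ with $g\mapsto a_g$ constant on double cosets, whereas you carry this out explicitly by exhibiting the basis $\{e_g\}$; your version is more detailed but not different in substance.
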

\begin{proof}
	The Hecke algebra $H(G,K)$ consists of elements of the form $\sum_{g\in G} a_g 1_g$ such that the function $g\mapsto a_g$ is constant on double cosets of $K$ in $G$.
	Therefore its dimension is $\abs{K\backslash G/K}$.
	On the other hand, by the Wedderburn decomposition~\eqref{eq:wedderburn-hecke}, the dimension is $\sum_{V\in \hat G} {\dim(V^K)}^2$.
\end{proof}
\begin{theorem}\label{theorem:commutativity}
	Let $G$ be a finite group and $K$ be a subgroup.
	\begin{enumerate}[1.]
		\item The algebra $H(G,K)$ is commutative if and only if $\dim V^K\leq 1$ for every $V\in \hat G$.
		\item If $KgK = Kg^{-1}K$ for all $g\in G$, then $H(G,K)$ is commutative (Gelfand's Trick).
	\end{enumerate}
\end{theorem}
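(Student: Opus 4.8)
The plan is to read both statements off the Wedderburn decomposition \eqref{eq:wedderburn-hecke}, using for the second part an additional anti-automorphism argument (Gelfand's trick proper).

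\textbf{Part 1.} I would first note that \eqref{eq:wedderburn-hecke} is an isomorphism of \emph{algebras}, not merely of vector spaces: in \eqref{eq:wedderburn} each summand $V\otimes V'$ is a two-sided ideal of $\CC[G]$ isomorphic as an algebra to $\End_\CC(V)$, and compressing by the idempotent $\epsilon_K$ turns $\End_\CC(V)$ into $\End_\CC(V^K)$, since $\epsilon_K$ acts on $V$ as the projection onto $V^K$. This yields
\begin{displaymath}
	H(G,K)\cong\bigoplus_{V\in\hat G}\End_\CC(V^K),
\end{displaymath}
a finite direct product of matrix algebras $M_{d_V}(\CC)$ with $d_V=\dim V^K$. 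Such a product is commutative if and only if every factor is, and $M_{d_V}(\CC)$ is commutative precisely when $d_V\le 1$; this is exactly the assertion of part 1. The only point needing care is the multiplicativity of \eqref{eq:wedderburn-hecke}, which follows because \eqref{eq:wedderburn} respects the ring structure on $\CC[G]$ and conjugation by the idempotent $\epsilon_K$ is compatible with multiplication.

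\textbf{Part 2.} I would introduce the linear map $\ast\colon\CC[G]\to\CC[G]$ defined on the basis by $1_g\mapsto 1_{g^{-1}}$. A check on basis elements shows it is an anti-automorphism, $(xy)^\ast=y^\ast x^\ast$. Since $K$ is closed under inversion, $\epsilon_K^\ast=\epsilon_K$, so $\ast$ carries $\epsilon_K\CC[G]\epsilon_K$ into itself and restricts to an anti-automorphism of $H(G,K)$. As recorded in the proof of Theorem~\ref{theorem:dimension-hecke-algebra}, $H(G,K)$ consists of the elements $x=\sum_{g\in G}a_g1_g$ for which $g\mapsto a_g$ is constant on double cosets $KgK$. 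Under the hypothesis $KgK=Kg^{-1}K$, the elements $g$ and $g^{-1}$ lie in the same double coset, so $a_{g^{-1}}=a_g$ for all $g$, whence $x^\ast=\sum_g a_g1_{g^{-1}}=\sum_h a_{h^{-1}}1_h=x$. Thus $\ast$ is the identity on $H(G,K)$, and for any $x,y\in H(G,K)$ we get $xy=(xy)^\ast=y^\ast x^\ast=yx$, so $H(G,K)$ is commutative.

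\textbf{Main obstacle.} There is no substantial difficulty here; the two things I would be careful about are that \eqref{eq:wedderburn-hecke} is genuinely an algebra isomorphism (so that commutativity of $H(G,K)$ is equivalent to commutativity of each block), and that $\ast$ really does stabilize $H(G,K)$ and act there as the identity under the double-coset hypothesis. One could instead deduce part 2 from part 1 by arguing that $KgK=Kg^{-1}K$ for all $g$ forces $\dim V^K\le 1$, but that route is the more delicate computation that resurfaces in the proof of Theorem~\ref{theorem:easy-gelfand}, so the self-contained anti-automorphism argument is preferable.
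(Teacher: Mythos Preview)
Your proof is correct and follows essentially the same approach as the paper: both parts rest on the Wedderburn decomposition~\eqref{eq:wedderburn-hecke} for Part~1 and on the anti-involution $1_g\mapsto 1_{g^{-1}}$ for Part~2. The only cosmetic difference is that for Part~1 the paper phrases the argument via the right regular $H(G,K)$-module (commutative $\Leftrightarrow$ multiplicity-free), whereas you read off $H(G,K)\cong\bigoplus_V M_{d_V}(\CC)$ directly and observe that a product of matrix algebras is commutative iff every block has size at most $1$; these are the same argument in slightly different clothing.
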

\begin{proof}
	The semisimple algebra $H(G,K)$ is the endomorphism algebra of the right regular $H(G,K)$-module.
	Hence, it is commutative if and only if this module has a multiplicity-free decomposition into simples.
	From the Wedderburn decomposition~\eqref{eq:wedderburn-hecke}, this is the case if and only if $\dim V^K\leq 1$ for every $V\in \hat G$.

	For each $g\in G$, let $1_g^\dagger = 1_{g^{-1}}$.
	By linear extension, $\dagger$ extends to an anti-involution on $\CC[G]$.
	Since $\epsilon_K^\dagger = \epsilon_K$, $\dagger$ restricts to an anti-involution on $H(G,K)$.
	If $KgK = Kg^{-1}K$ for all $g\in G$, then the restriction of $\dagger$ to $H(G,K)$ is the identity.
	If the identity map is an anti-involution, the algebra $H(G,K)$ has to be commutative.
\end{proof}
\subsection{Kronecker Hecke Algebra and Simultaneous Conjugacy}\label{sec:kronecker-hecke}
For every finite group $G$ and every positive integer $d$, the $d$th \emph{Kronecker Hecke algebra} of $G$ is defined as 
\begin{displaymath}
	\kron_d(G) = H(G^{d+1},\Delta G).
\end{displaymath}
The group $G$ acts on $G^d$ by \emph{simultaneous conjugation}, i.e., for $g\in G$ and $(g_1,\dotsc,g_d)\in G^d$,
\begin{displaymath}
	g\cdot (g_1,\dotsc,g_d) = (gg_1g^{-1},\dotsc,gg_d{g}^{-1}).
\end{displaymath}
The orbits of this action are \emph{simultaneous conjugacy classes}.
We denote the set of simultaneous conjugacy classes in $G^d$ by $\conj_d(G)$.

\begin{lemma}\label{lemma:bijection}
	The map $(g_1,\dotsc,g_d)\mapsto (g_1,\dotsc,g_d,1)$ gives rise to a bijection
	\begin{displaymath}
		\conj_d(G)\to \Delta G\backslash G^{d+1}/\Delta G.
	\end{displaymath}
\end{lemma}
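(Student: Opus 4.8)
The plan is to show that the composite of the assignment $(g_1,\dotsc,g_d)\mapsto(g_1,\dotsc,g_d,1)$ with the quotient map $G^{d+1}\to\Delta G\backslash G^{d+1}/\Delta G$ factors through $\conj_d(G)$ and induces a bijection, by checking well-definedness, surjectivity, and injectivity in turn. Throughout I would use that the $\Delta G$--$\Delta G$ double coset of a tuple consists of all tuples obtained from it by componentwise left multiplication by a fixed $a\in G$ together with componentwise right multiplication by a fixed $b^{-1}$, and that simultaneous conjugation is exactly the special case $b=a$.

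First, well-definedness: if $(g_1',\dotsc,g_d')=(gg_1g^{-1},\dotsc,gg_dg^{-1})$, then $(g_1',\dotsc,g_d',1)=(gg_1g^{-1},\dotsc,gg_dg^{-1},g\cdot 1\cdot g^{-1})$ lies in the $\Delta G$--$\Delta G$ double coset of $(g_1,\dotsc,g_d,1)$ (take $a=b=g$). Hence the composite is constant on simultaneous conjugacy classes and descends to a map $\conj_d(G)\to\Delta G\backslash G^{d+1}/\Delta G$.

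Next, surjectivity: given any $(h_1,\dotsc,h_{d+1})\in G^{d+1}$, right-multiplying every coordinate by $h_{d+1}^{-1}$ (i.e. acting by the diagonal element with $b=h_{d+1}$, $a=1$) produces the tuple $(h_1h_{d+1}^{-1},\dotsc,h_dh_{d+1}^{-1},1)$ in the same double coset, and this is the image of the simultaneous conjugacy class of $(h_1h_{d+1}^{-1},\dotsc,h_dh_{d+1}^{-1})$. The organizing idea, reused in the last step, is that normalizing the final coordinate to $1$ selects a canonical representative of each double coset.

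Finally, injectivity: if $(g_1,\dotsc,g_d,1)$ and $(g_1',\dotsc,g_d',1)$ lie in the same double coset, write $g_i'=ag_ib^{-1}$ for $i\le d$ and $1=a\cdot 1\cdot b^{-1}$ for suitable $a,b\in G$; the last equation forces $a=b$, so $g_i'=ag_ia^{-1}$ for all $i$ and the two tuples are simultaneously conjugate. There is no substantive obstacle in any of this; the only point demanding care is the bookkeeping of which side $\Delta G$ acts on, so that the normalization is applied to the correct (last) coordinate and the constraint genuinely collapses $a$ and $b$.
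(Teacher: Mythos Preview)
Your proof is correct and follows essentially the same approach as the paper: the paper simply states that the inverse is induced by $(g_1,\dotsc,g_{d+1})\mapsto (g_1 g_{d+1}^{-1},\dotsc,g_d g_{d+1}^{-1})$ and declares the verification easy, while you carry out that verification explicitly by checking well-definedness, surjectivity (via exactly this normalization of the last coordinate), and injectivity (via the constraint $a=b$ forced by the last coordinate).
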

\begin{proof}
	It is easy to check that the map gives rise to a well-defined function whose inverse is induced by the map $(g_1,\dotsc,g_{d+1})\mapsto (g_1 g_{d+1}^{-1},\dotsc,g_d g_{d+1}^{-1})$.
\end{proof}
\begin{theorem}\label{theorem:sum-of-squares}
	Let $G$ be a finite group and $d$ be a positive integer.
	Then
	\begin{displaymath}
		\sum_{(V_1,\dotsc,V_{d+1})\in \hat G^{d+1}} \kappa(V_1,\dotsc,V_{d+1})^2 = \abs{\conj_d(G)}.
	\end{displaymath}
\end{theorem}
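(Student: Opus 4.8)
The plan is to apply Theorem~\ref{theorem:dimension-hecke-algebra} to the group $G^{d+1}$ with subgroup $K=\Delta G$, and then translate both sides into the stated quantities. First I would record that the relevant Hecke algebra is $\kron_d(G)=H(G^{d+1},\Delta G)$, so Theorem~\ref{theorem:dimension-hecke-algebra} immediately gives
\[
	\sum_{W\in\widehat{G^{d+1}}}\dim(W^{\Delta G})^2=\abs{\Delta G\backslash G^{d+1}/\Delta G}.
\]
By Lemma~\ref{lemma:bijection}, the right-hand side equals $\abs{\conj_d(G)}$, so it remains only to rewrite the left-hand side as a sum of squares of generalized Kronecker coefficients.

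Next I would invoke the classification of irreducible representations of a direct product: every $W\in\widehat{G^{d+1}}$ is isomorphic to an external tensor product $V_1\boxtimes\dotsb\boxtimes V_{d+1}$ with $(V_1,\dotsc,V_{d+1})\in\hat G^{d+1}$, and this correspondence is a bijection $\hat G^{d+1}\to\widehat{G^{d+1}}$. Under this identification, restricting $W$ along the diagonal embedding $\Delta G\hookrightarrow G^{d+1}$ yields the representation $V_1\otimes\dotsb\otimes V_{d+1}$ of $G$ with the diagonal action. Hence $W^{\Delta G}$ is the space of $G$-invariants in $V_1\otimes\dotsb\otimes V_{d+1}$, which is canonically $\Hom_G(\mathbf 1,V_1\otimes\dotsb\otimes V_{d+1})$, so $\dim W^{\Delta G}$ is the multiplicity of the trivial representation of $G$ in $V_1\otimes\dotsb\otimes V_{d+1}$, that is, $\kappa(V_1,\dotsc,V_{d+1})$ by definition. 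Substituting this and re-indexing the sum over $\hat G^{d+1}$ produces the claimed identity.

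The argument is a chain of translations, so I do not expect a real obstacle; the one point requiring a little care is the identification of $\widehat{G^{d+1}}$ with $\hat G^{d+1}$ via external tensor products (so that the sum over irreducibles of $G^{d+1}$ becomes a sum over $(d+1)$-tuples of irreducibles of $G$) together with the compatibility of this identification with restriction along $\Delta G$. Alternatively, one can bypass Theorem~\ref{theorem:dimension-hecke-algebra} and argue directly from the Wedderburn decomposition~\eqref{eq:wedderburn-hecke} applied to $\kron_d(G)$: its dimension is $\abs{\Delta G\backslash G^{d+1}/\Delta G}=\abs{\conj_d(G)}$ on one hand, and $\sum_{W}\dim(W^{\Delta G})^2=\sum_{(V_1,\dotsc,V_{d+1})}\kappa(V_1,\dotsc,V_{d+1})^2$ on the other. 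This is the form of the argument the introduction attributes to the Wedderburn decomposition, and it has the virtue of exhibiting the summands $\kappa(V_1,\dotsc,V_{d+1})^2$ as the dimensions of the matrix blocks of $\kron_d(G)$.
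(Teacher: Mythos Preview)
Your proposal is correct and follows exactly the paper's approach: apply Theorem~\ref{theorem:dimension-hecke-algebra} to $(G^{d+1},\Delta G)$, use Lemma~\ref{lemma:bijection} to identify $|\Delta G\backslash G^{d+1}/\Delta G|$ with $|\conj_d(G)|$, and identify $\dim(V_1\otimes\dotsb\otimes V_{d+1})^{\Delta G}$ with $\kappa(V_1,\dotsc,V_{d+1})$ via the classification of irreducibles of $G^{d+1}$. The alternative you mention via the Wedderburn decomposition is also precisely what the paper alludes to in the introduction.
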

\begin{proof}
	The theorem follows from Lemma~\ref{lemma:bijection} and Eq.~\eqref{eq:dimension-hecke-algebra}.
	For $\kron_d(G)$, the right-hand side of Eq.~\eqref{eq:dimension-hecke-algebra} is $\abs{\Delta G\backslash G^{d+1}/\Delta G}$ which, by Lemma~\ref{lemma:bijection}, is the same as $\abs{\conj_d(G)}$.
	The irreducible representations of $G^{d+1}$ are of the form $V_1\otimes\dotsb\otimes V_{d+1}$ where $(V_1,\dotsb,V_{d+1})\in \hat G^{d+1}$.
	Since $\dim (V_1\otimes\dotsb\otimes V_{d+1})^{\Delta G} = \kappa(V_1,\dotsc,V_{d+1})$, the left-hand side of~\eqref{eq:dimension-hecke-algebra} becomes $\sum \kappa(V_1,\dotsc,V_{d+1})^2$.
\end{proof}
In the group algebra $\CC[G^d]$, the subspace
\begin{displaymath}
	\CC[G^d/\sim]:=\Big\{\sum_{(g_1,\dotsc,g_d)\in G^d} a_{g_1,\dotsc,g_d} 1_{(g_1,\dotsc g_d)}\mid a_{g_1,\dotsc,g_d} = a_{g\cdot (g_1,\dotsc,g_d)}\text{ for all }g, g_1,\dotsc,g_d\in G\Big\}
\end{displaymath}
is a subalgebra.
\begin{theorem}\label{theorem:conj-kron}
	For every finite group $G$ and every positive integer $d$, the subalgebra $\CC[G^d/\sim]$ of $\CC[G^d]$ is isomorphic to the Kronecker-Hecke algebra $\kron_d(G)$.
\end{theorem}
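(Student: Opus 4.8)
The plan is to identify $\kron_d(G)=H(G^{d+1},\Delta G)$ by exploiting the semidirect-product structure of $G^{d+1}$. First I would set $\Gamma=G^{d+1}$, $N=G^d\times\{1\}$ and $K=\Delta G$, and record that $N\trianglelefteq\Gamma$, $N\cap K=\{1\}$, and $\Gamma=NK$, the factorization being $(g_1,\dotsc,g_{d+1})=(g_1g_{d+1}^{-1},\dotsc,g_dg_{d+1}^{-1},1)\cdot(g_{d+1},\dotsc,g_{d+1})$. Thus $\Gamma\cong N\rtimes K$, and under the identification $N\cong G^d$ obtained by forgetting the last coordinate, the conjugation action of $K\cong G$ on $N$ is exactly simultaneous conjugation. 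Since the coefficient functions of elements of $\CC[G^d/\sim]$ are by definition constant on simultaneous conjugacy classes, this identifies $\CC[G^d/\sim]$ with $\CC[N]^K$, the subalgebra of $K$-invariants in $\CC[N]\subseteq\CC[\Gamma]$.

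The theorem then reduces to a general statement: whenever $\Gamma=NK$ with $N$ normal, $K$ a subgroup and $N\cap K=\{1\}$, the map $\phi\colon x\mapsto x\epsilon_K$ is an algebra isomorphism $\CC[N]^K\to H(\Gamma,K)$. The key observation is that $K$-invariance of $x\in\CC[N]^K$ amounts to $1_kx1_k^{-1}=x$ for every $k\in K$, so that $x$ commutes with $\epsilon_K$; therefore $x\epsilon_K=\epsilon_Kx\epsilon_K$ lies in $H(\Gamma,K)$, and $\phi(x)\phi(y)=x\epsilon_Ky\epsilon_K=xy\epsilon_K^2=\phi(xy)$, while $\phi$ is unital because $\phi(1)=\epsilon_K$. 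Injectivity is immediate from the unique factorization $\gamma=nk$: writing $x=\sum_n a_n1_n$ gives $x\epsilon_K=\frac{1}{\abs K}\sum_{n,k}a_n1_{nk}$ with all the $1_{nk}$ distinct. For surjectivity I would count dimensions: the $K$-orbits in $N$ biject with $K\backslash\Gamma/K$ via $\mathcal O\mapsto K\mathcal O K$, since $KnK=\bigsqcup_{n'\in\mathcal O}n'K$ when $\mathcal O$ is the orbit of $n$, so $\dim\CC[N]^K=\abs{K\backslash\Gamma/K}=\dim H(\Gamma,K)$; alternatively one checks directly that $\phi$ carries the orbit-sum basis of $\CC[N]^K$ onto scalar multiples of the double-coset basis of $H(\Gamma,K)$.

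Specializing to $\Gamma=G^{d+1}$, $N\cong G^d$, $K=\Delta G$ gives $\CC[G^d/\sim]=\CC[N]^K\cong H(G^{d+1},\Delta G)=\kron_d(G)$; here the needed dimension count is precisely Lemma~\ref{lemma:bijection}. I do not expect a real obstacle; the only steps demanding care are the verification that $\phi$ is multiplicative, which rests entirely on the commutation $x\epsilon_K=\epsilon_Kx$ available exactly because $x$ is $K$-invariant, and the bookkeeping that matches $\CC[G^d/\sim]$ with $\CC[N]^K$ and the induced action with simultaneous conjugation.
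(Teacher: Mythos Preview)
Your proof is correct and follows essentially the same strategy as the paper: embed $\CC[G^d]$ into $\CC[G^{d+1}]$ via the normal subgroup $N=G^d\times\{1\}$, recognize the $\Delta G$-conjugation action as simultaneous conjugation, and invoke Lemma~\ref{lemma:bijection} for the dimension count. The paper's two-line proof leaves the actual isomorphism implicit (simply citing the embedding $1_{(g_1,\dotsc,g_d)}\mapsto 1_{(g_1,\dotsc,g_d,1)}$ and Lemma~\ref{lemma:bijection}), whereas you make the map $x\mapsto x\epsilon_K$ explicit, verify multiplicativity via the commutation $x\epsilon_K=\epsilon_Kx$, and frame everything as a general fact about internal semidirect products $\Gamma=N\rtimes K$; this added care is an improvement in exposition rather than a genuinely different route.
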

\begin{proof}
The map $1_{(g_1,\dotsc,g_d)}\mapsto 1_{(g_1,\dotsc,g_d,1)}$ is an algebra embedding of $\CC[G^d]$ in $\CC[G^{d+1}]$.
By Lemma~\ref{lemma:bijection}, it induces and isomorphism $\CC[G^d/\sim]\cong \kron_d(G)$.
\end{proof}
\begin{remark}
	When $d=1$, $\CC[G/\sim]$ is the center of $\CC[G]$.
	Theorem~\ref{theorem:sum-of-squares} reduces to the familiar result that the number of isomorphism classes of irreducible representations of $G$ is equal to the number of conjugacy classes of $G$.
\end{remark}
\subsection{Kronecker Hecke Algebras and Multiplicity-Free Tensor Products}\label{section:multiplicity-free}
\begin{definition}
	Let $G$ be a finite group and $d$ be a positive integer.
	Then $G$ is said to have multiplicity-free $d$-fold tensor products if, for all irreducible representations $V_1,\dotsc,V_d$ of $G$, the representation $V_1\otimes\dotsb\otimes V_d$ has a multiplicity-free decomposition into irreducible representations of $G$.
\end{definition}

The condition that $G$ has multiplicity-free $2$-fold tensor products is equivalent to the condition that $U\otimes V$ has a multiplicity-free decomposition into irreducible representations for all $U,V\in \hat G$.
In this case, we say that $G$ has multiplicity-free tensor products.
\begin{definition}\label{definition:d-real}
	A simultaneous conjugacy class in $G^d$ is said to be real if it is closed under componentwise inversion.
	The group $G$ is said to be $d$-real if every simultaneous conjugacy class in $G^d$ is real.
\end{definition}
\begin{theorem}\label{theorem:commutativity-kronecker-hecke}
	Let $G$ be a finite group and $d$ be a positive integer.
	\begin{enumerate}[1.]
		\item Then $G$ has multiplicity-free $d$-fold tensor products if and only if $\kron_d(G)$ is commutative.
		\item If $G$ is $d$-real then $G$ has multiplicity-free $d$-fold tensor products.
	\end{enumerate}
\end{theorem}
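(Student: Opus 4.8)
The plan is to derive both parts by specialising Theorem~\ref{theorem:commutativity} to the pair $(G^{d+1},\Delta G)$, exactly as Theorem~\ref{theorem:sum-of-squares} specialises Theorem~\ref{theorem:dimension-hecke-algebra}. For part~(1) I would begin from Theorem~\ref{theorem:commutativity}(1): the algebra $\kron_d(G)=H(G^{d+1},\Delta G)$ is commutative if and only if $\dim W^{\Delta G}\leq 1$ for every irreducible representation $W$ of $G^{d+1}$. Every such $W$ is of the form $V_1\otimes\dotsb\otimes V_{d+1}$ with $(V_1,\dotsc,V_{d+1})\in\hat G^{d+1}$, and $\dim(V_1\otimes\dotsb\otimes V_{d+1})^{\Delta G}=\kappa(V_1,\dotsc,V_{d+1})$ — the identification already used in the proof of Theorem~\ref{theorem:sum-of-squares}. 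Hence $\kron_d(G)$ is commutative precisely when $\kappa(V_1,\dotsc,V_{d+1})\leq 1$ for all $(d+1)$-tuples of irreducibles. It then remains to recognise this as the multiplicity-free condition: the multiplicity of an irreducible $W$ in $V_1\otimes\dotsb\otimes V_d$ equals the multiplicity of the trivial representation in $V_1\otimes\dotsb\otimes V_d\otimes W'$, i.e.\ $\kappa(V_1,\dotsc,V_d,W')$, and as $W$ runs over $\hat G$ so does $W'$. So ``$G$ has multiplicity-free $d$-fold tensor products'' and ``$\kappa\leq 1$ on every $(d+1)$-tuple of irreducibles'' are literally the same assertion, which proves~(1).

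For part~(2) I would show that $d$-realness is precisely the hypothesis of Gelfand's trick, Theorem~\ref{theorem:commutativity}(2), for the pair $(G^{d+1},\Delta G)$. By Lemma~\ref{lemma:bijection} every double coset of $\Delta G$ in $G^{d+1}$ has a representative of the normal form $g=(g_1,\dotsc,g_d,1)$, whose double coset corresponds to the simultaneous conjugacy class of $(g_1,\dotsc,g_d)$ in $G^d$. Since $g^{-1}=(g_1^{-1},\dotsc,g_d^{-1},1)$ is again of this form, the double coset of $g^{-1}$ corresponds to the class of $(g_1^{-1},\dotsc,g_d^{-1})$, i.e.\ to the componentwise inverse. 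Because the validity of $\Delta G\, g\,\Delta G=\Delta G\, g^{-1}\,\Delta G$ depends only on the double coset of $g$, and every double coset meets the normal form, $G$ is $d$-real if and only if $\Delta G\, g\,\Delta G=\Delta G\, g^{-1}\,\Delta G$ for all $g\in G^{d+1}$. By Theorem~\ref{theorem:commutativity}(2) this forces $\kron_d(G)$ to be commutative, and part~(1) upgrades this to multiplicity-free $d$-fold tensor products.

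I do not anticipate a serious obstacle: the one point requiring care is the bookkeeping in part~(2) — verifying that the anti-involution $g\mapsto g^{-1}$ on $\Delta G\backslash G^{d+1}/\Delta G$ corresponds to componentwise inversion on $\conj_d(G)$ under the bijection of Lemma~\ref{lemma:bijection}, and not to some twisted variant — which is why I would argue with the explicit representatives $(g_1,\dotsc,g_d,1)$ rather than with arbitrary double-coset representatives. An equivalent shortcut for part~(2) is available through Theorem~\ref{theorem:conj-kron}: transport the anti-involution $1_{(g_1,\dotsc,g_d)}\mapsto 1_{(g_1^{-1},\dotsc,g_d^{-1})}$ of $\CC[G^d]$ to $\CC[G^d/\sim]\cong\kron_d(G)$; when $G$ is $d$-real this map fixes the indicator of every simultaneous conjugacy class, hence is the identity, and an algebra admitting the identity as an anti-involution is commutative.
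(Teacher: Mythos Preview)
Your proposal is correct and follows essentially the same approach as the paper: both parts are obtained by specialising Theorem~\ref{theorem:commutativity} to the pair $(G^{d+1},\Delta G)$, and in part~(2) you, like the paper, verify the self-inverse double-coset hypothesis of Gelfand's trick and then invoke Theorem~\ref{theorem:commutativity}(2). The only cosmetic difference is that for part~(2) the paper carries out an explicit computation with an arbitrary $(g_1,\dotsc,g_{d+1})\in G^{d+1}$ to show $\Delta G(g_1,\dotsc,g_{d+1})\Delta G=\Delta G(g_1^{-1},\dotsc,g_{d+1}^{-1})\Delta G$, whereas you first normalise via Lemma~\ref{lemma:bijection} to representatives of the form $(g_1,\dotsc,g_d,1)$ and observe that the condition depends only on the double coset; both routes are equally valid and amount to the same verification.
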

\begin{proof}
	The first assertion follows immediately from the first assertion of Theorem~\ref{theorem:commutativity}.
	For the second assertion, note that if $G$ is $d$-real, then for all $(g_1,\dotsc,g_{d+1})\in G^{d+1}$ there exists $g\in G$ such that
	\begin{displaymath}
		g g_i g_{d+1}^{-1} g^{-1} = g_{d+1}g_i^{-1} \text{ for }i=1,\dotsc,d.
	\end{displaymath}
	We have
	\begin{align*}
		\Delta(g_{d+1}^{-1}g)(g_1,\dotsc,g_d,g_{d+1})\Delta(g_{d+1}^{-1}g^{-1}) & = \Delta(g_{d+1}^{-1})(gg_1g_{d+1}^{-1}g^{-1},\dotsc,gg_dg_{d+1}^{-1}g^{-1},1)\\
		& = \Delta(g_{d+1}^{-1})(g_{d+1}g_1^{-1},\dotsc,g_{d+1}g_d^{-1},1)\\
		& = (g_1^{-1},\dotsc,g_d^{-1},g_{d+1}^{-1}).
	\end{align*}
	Hence, $\Delta(G)(g_1,\dotsc,g_{d+1})\Delta(G) = \Delta(G)(g_1^{-1},\dotsc,g_d^{-1},g_{d+1}^{-1})\Delta(G)$ for all $(g_1,\dotsc,g_{d+1})\in G^{d+1}$.
	By the second assertion of Theorem~\ref{theorem:commutativity}, $\kron_d(G)$ is commutative.
\end{proof}
\begin{remark}
	For any finite group $G$, the Kronecker-Hecke algebra $\kron_1(G)$ is commutative.
	This follows from the fact that $\kappa(U,V)=\delta_{UV'}$ for all $U,V\in \hat G$.
\end{remark}
\begin{theorem}\label{theorem:higher-hecke-commutative}
	Any finite group $G$ with multiplicity-free $3$-fold tensor products is commutative.
\end{theorem}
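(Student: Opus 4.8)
The plan is to prove the contrapositive: if $G$ is non-Abelian, then $G$ does not have multiplicity-free $3$-fold tensor products. By the first assertion of Theorem~\ref{theorem:commutativity-kronecker-hecke} this is equivalent to the failure of Eq.~\eqref{eq:commutativity-kronecker-hecke} for $d=3$, so it suffices to exhibit a quadruple $(V_1,V_2,V_3,V_4)\in\hat G^4$ with $\kappa(V_1,V_2,V_3,V_4)\geq 2$. Since $G$ is non-Abelian it has an irreducible representation $V$ with $n:=\dim V\geq 2$, and I would take the quadruple $(V,V',V,V')$.

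Using the adjunction $\Hom_G(\mathbf 1,A\otimes B)\cong\Hom_G(A',B)$ together with the self-duality $(V\otimes V')'\cong V\otimes V'$, one computes
\begin{displaymath}
	\kappa(V,V',V,V') = \dim\Hom_G(\mathbf 1, V\otimes V'\otimes V\otimes V') = \dim\End_G(V\otimes V').
\end{displaymath}
Now $V\otimes V'\cong\End(V)$ is reducible: it contains the trivial representation with multiplicity exactly $1$ (since $\dim\Hom_G(\mathbf 1, V\otimes V')=\dim\End_G(V)=1$ by Schur), and this copy is a \emph{proper} subrepresentation because $\dim(V\otimes V')=n^2\geq 4>1$. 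Hence $V\otimes V'$ has at least two irreducible constituents, so by the Wedderburn decomposition $\dim\End_G(V\otimes V')\geq 1^2+1^2=2$. Therefore $\kappa(V,V',V,V')\geq 2$, which contradicts Eq.~\eqref{eq:commutativity-kronecker-hecke} for $d=3$; consequently $G$ must be Abelian.

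The argument is essentially formal and I do not anticipate a genuine obstacle; the only point requiring care is the bookkeeping in the Hom–tensor adjunction and the elementary observation that the tensor product of a $(\geq 2)$-dimensional irreducible with its contragredient always splits off, but is not exhausted by, a copy of the trivial representation. An equivalent route avoiding the adjunction is to note directly that, for $n\geq 2$, $\End(V)$ (equivalently $V\otimes V'$) decomposes as $\mathbf 1\oplus(\text{something nonzero})$, or to work with the tensor square $V\otimes V\cong \Sym^2 V\oplus\wedge^2 V$ (both summands nonzero for $n\geq 2$); in either case the relevant sum of squares of multiplicities is at least $2$, giving the same contradiction.
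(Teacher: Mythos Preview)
Your proof is correct and follows essentially the same route as the paper: both take an irreducible $V$ with $\dim V\geq 2$, consider the quadruple $(V,V',V,V')$ (the paper writes $(V,V',V',V)$, which is the same by symmetry), and show that the trivial representation occurs in the fourfold tensor product with multiplicity at least $2$. The paper does this by explicitly expanding $(1\oplus W)\otimes(1\oplus W')$ with $V\otimes V'=1\oplus W$, whereas you phrase the same computation as $\dim\End_G(V\otimes V')\geq 2$; these are two ways of saying the same thing.
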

\begin{proof}
	If $G$ is non-commutative, it has an irreducible representation $V$ of dimension at least $2$.
	Let $V'$ denote the contragredient representation of $V$.
	The trivial representation occurs in $V\otimes V'$, so $V\otimes V'=1\oplus W$ for some representation $W$ of $V$.
	Now
	\begin{displaymath}
		V\otimes V'\otimes V'\otimes V = (1\oplus W)\otimes (1\oplus W') = 1\oplus(W\oplus W')\oplus (W\otimes W'). 
	\end{displaymath}
	Since $W\otimes W'$ contains the trivial representation, we must have $\kappa(V,V',V',V)\geq 2$.
\end{proof}
\section{Real Simultaneous Conjugacy Classes and Kronecker Coefficients}\label{section:real-simulateneous-conjugacy}
\subsection{Frobenius-Schur Indicators and Self-Inverse Double Cosets}\label{sec:appl-hecke-algebr}
The Frobenius--Schur~\cite{FS} indicator $\sigma(V)$ of an irreducible representation $V$ of $G$ is defined as
\begin{displaymath}
    \sigma(V) = \frac 1{|G|}\sum_{g\in G} \chi_V(g^2).
\end{displaymath}
If $V$ is self-contragredient, then $\sigma(V)\in \{1,-1\}$, and otherwise $\sigma(V)=0$.
For self-contragredient representations $\sigma(V)=1$ (resp. $\sigma(V)=-1$) if and only if $V$ admits a non-degenerate $G$-invariant symmetric (resp. alternating) bilinear form.

The following result is due to Frame~\cite{MR4027}.
\begin{theorem}\label{theorem:trace-dagger-hecke}
	Let $G$ be a group and $K$ any subgroup.
	Then
	\begin{displaymath}
		\sum_{V\in \hat G} \sigma(V)\dim(V^K) = \#\{KgK\in K\backslash G/K \mid KgK=Kg^{-1}K\}.
	\end{displaymath}
\end{theorem}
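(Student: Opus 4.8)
The plan is to compute the trace of the linear involution $\dagger$ on $H(G,K)$ --- the restriction of the anti-involution $1_g\mapsto 1_{g^{-1}}$ of $\CC[G]$, which preserves $H(G,K)$ since $\epsilon_K^\dagger=\epsilon_K$ (as in the proof of Theorem~\ref{theorem:commutativity}) --- in two ways, and to compare the answers. On one hand, the elements $\epsilon_K1_g\epsilon_K$, as $KgK$ ranges over $K\backslash G/K$, form a basis of $H(G,K)$: each is a nonzero multiple of the double-coset sum $\sum_{h\in KgK}1_h$, and these are linearly independent with pairwise disjoint supports and span $H(G,K)$ by the description of that algebra in the proof of Theorem~\ref{theorem:dimension-hecke-algebra}. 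Since $(\epsilon_K1_g\epsilon_K)^\dagger=\epsilon_K1_{g^{-1}}\epsilon_K$ and $(KgK)^{-1}=Kg^{-1}K$, in this basis $\dagger$ is the permutation matrix of the involution $KgK\mapsto Kg^{-1}K$ on $K\backslash G/K$; hence $\trace(\dagger\mid H(G,K))$ is the number of self-inverse double cosets, the right-hand side of the theorem.

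On the other hand, I would evaluate the same trace through the Wedderburn decomposition~\eqref{eq:wedderburn-hecke}, read as $H(G,K)\cong\bigoplus_{V\in\hat G}\End(V^K)$. On $\CC[G]\cong\bigoplus_V\End(V)$ the map $\dagger$ acts on the $V$-block by $\rho_V(g)\mapsto\rho_V(g^{-1})$, and on the minimal central idempotents $e_V^\dagger=e_{V'}$, so $\dagger$ carries the $V$-block onto the $V'$-block; since $e_V$ is central and $\epsilon_K^\dagger=\epsilon_K$, compressing by $\epsilon_K$ is compatible with this, and $\dagger$ carries the summand $\End(V^K)$ onto $\End((V')^K)$. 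If $V\not\cong V'$ these are distinct summands, so the pair $\{V,V'\}$ contributes $0$ to the trace, matching $\sigma(V)=\sigma(V')=0$. If $V\cong V'$, fix a nondegenerate $G$-invariant bilinear form $B$ on $V$, symmetric when $\sigma(V)=1$ and alternating when $\sigma(V)=-1$; invariance of $B$ gives $\rho_V(g^{-1})=\rho_V(g)^*$ (the $B$-adjoint), so $\dagger$ on $\End(V)$ is $A\mapsto A^*$. The projection $\rho_V(\epsilon_K)$ of $V$ onto $V^K$ is $B$-self-adjoint (an average of the $\rho_V(k)$ for $k\in K$, whose $B$-adjoints $\rho_V(k^{-1})$ occur in the same average), from which one deduces that $B$ restricts to a nondegenerate bilinear form on $V^K$ of the same symmetry type, and that $\dagger$ on the summand $\End(V^K)$ is the adjoint with respect to $B|_{V^K}$.

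The remaining ingredient is the elementary fact that, for a finite-dimensional space $U$ carrying a nondegenerate bilinear form, the adjoint involution on $\End(U)$ has trace $\dim U$ if the form is symmetric and $-\dim U$ if it is alternating: decompose $\End(U)$ into its self-adjoint and anti-self-adjoint parts, which under $A\mapsto B(A\cdot,\cdot)$ correspond to the spaces of symmetric and of alternating bilinear forms on $U$ (the two roles interchanged in the alternating case), of dimensions $n(n+1)/2$ and $n(n-1)/2$; the trace is the difference, $\pm n$. Applying this with $U=V^K$ shows the $V$-block contributes $\sigma(V)\dim V^K$ to the trace; summing over $\hat G$ and equating with the count of self-inverse double cosets proves the theorem. (Taking $K=\{1\}$ recovers the classical Frobenius--Schur identity $\#\{g\in G:g^2=1\}=\sum_{V\in\hat G}\sigma(V)\dim V$.)

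The step demanding the most care is the end of the second paragraph: verifying that compressing the $V$-block by the self-adjoint idempotent $\rho_V(\epsilon_K)$ genuinely converts the $B$-adjoint on $\End(V)$ into the $B|_{V^K}$-adjoint on $\End(V^K)$, and in particular that $B|_{V^K}$ remains nondegenerate, so that its symmetry type --- and with it the sign in the linear-algebra fact --- is well defined; the rest is routine. A variant that sidesteps the block analysis is to note, via the Peirce decomposition of $\CC[G]$ relative to $\epsilon_K$, that $\trace(\dagger\mid H(G,K))=\trace\bigl(x\mapsto\epsilon_Kx^\dagger\epsilon_K\mid\CC[G]\bigr)$, to evaluate the latter in the basis $\{1_g\}$ as $\frac1{|K|}\#\{g\in G:g^2\in K\}$ (using the equivalence $g=k_1g^{-1}k_2\iff(k_1^{-1}g)^2=k_1^{-1}k_2$ and summing over $k_1,k_2\in K$), and then to invoke the classical identities $\#\{g\in G:g^2=k\}=\sum_{V\in\hat G}\sigma(V)\chi_V(k)$ and $\frac1{|K|}\sum_{k\in K}\chi_V(k)=\dim V^K$.
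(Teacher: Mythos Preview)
Your proof is correct and takes a genuinely different route from the paper's. The paper follows Frame: it first proves the combinatorial identity
\[
\#\{KxK\mid x^{-1}\in KxK\}=\frac{1}{|G|}\#\{(Kx,g)\mid xg^{2}\in Kx\}
\]
via a Burnside-type count, then recognises the right-hand side as $\frac{1}{|G|}\sum_{g}\tr(g^{2};\CC[K\backslash G])$ and evaluates this average through the decomposition $\CC[K\backslash G]\cong\bigoplus_{V}V\otimes V^{K}$, which produces $\sum_{V}\sigma(V)\dim V^{K}$ directly from the definition of the indicator. Your main argument instead computes the trace of the involution $\dagger$ on $H(G,K)$ itself: on the double-coset basis this is visibly the number of self-inverse double cosets, while on the Wedderburn side you identify $\dagger$ block-by-block with the $B$-adjoint and invoke the elementary trace computation on $\End(U)$. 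This bypasses Frame's lemma entirely and is more structural; the paper's approach, in exchange, never needs to analyse how $\dagger$ interacts with the Wedderburn blocks or to check that $B|_{V^{K}}$ stays nondegenerate, since the indicator enters only through its character-sum definition. Your end-of-proof variant via the Peirce decomposition lands on the same intermediate quantity $\tfrac{1}{|K|}\#\{g\in G: g^{2}\in K\}$ that the paper's computation does (their $\tfrac{1}{|G|}\#\{(Kx,g)\mid xg^{2}\in Kx\}$ rewrites to exactly this), so that alternative is closer in spirit to Frame's argument, though reached by a different manipulation.
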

Frame easily deduces Theorem~\ref{theorem:trace-dagger-hecke} using the following lemma~\cite[Lemma~3.1]{MR4027}:
\begin{lemma}
    We have
    \begin{equation}\label{eq:frame-3.1}
        \#\{KxK\mid x^{-1}\in K x K\} = \frac 1{|G|} \#\{(Kx,g) \mid xg^2\in Kx\}.
    \end{equation}
\end{lemma}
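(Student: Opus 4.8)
The plan is to route both sides of the identity through the auxiliary set $T=\{h\in G\mid h^2\in K\}$, establishing that $\#\{(Kx,g)\mid xg^2\in Kx\}=[G:K]\cdot\abs T$ and that $\abs T=\abs K\cdot\#\{KxK\mid x^{-1}\in KxK\}$; dividing the first equation by $\abs G$ and using the second then yields the claim. The first equation is easy: the condition $xg^2\in Kx$, i.e.\ $xg^2x^{-1}\in K$, depends only on the coset $Kx$ (replacing $x$ by $k_0x$ conjugates $xg^2x^{-1}$ by $k_0\in K$), so the left-hand count equals $\sum_{Kx\in K\backslash G}\#\{g\in G\mid xg^2x^{-1}\in K\}$; for a fixed representative $x$ the bijection $g\mapsto h:=xgx^{-1}$ of $G$ carries $xg^2x^{-1}$ to $h^2$, so each summand is $\abs T$ and the sum is $[G:K]\cdot\abs T$.

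For the second equation I would study the map $\phi\colon T\to K\backslash G/K$ sending $h$ to $KhK$. If $h^2=k\in K$ then $h^{-1}=k^{-1}h\in Kh\subseteq KhK$, so $KhK$ is self-inverse; conversely, if $KxK$ is self-inverse, write $x^{-1}=k_1xk_2$ and note that $h:=k_1x=x^{-1}k_2^{-1}$ lies in $KxK$ with $h^2=k_1k_2^{-1}\in K$. Hence $\phi$ is a surjection of $T$ onto the set of self-inverse double cosets, and its fibre over such a coset $D$ is precisely $T\cap D$. Thus it remains to prove that $\abs{T\cap D}=\abs K$ for every self-inverse double coset $D$.

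This last point is the main obstacle. Fix $h_0\in T\cap D$ and set $L:=K\cap h_0^{-1}Kh_0$. Since $h_0^2\in K$, a direct computation gives $k_1h_0k_2\in T$ if and only if $k_2k_1\in L$, so reparametrising shows $\beta\colon L\times K\to D$, $\beta(u,k):=k^{-1}uh_0k$, is a surjection onto $T\cap D$. I would then let $L$ act on $L\times K$ by $v\cdot(u,k):=\bigl(vu\cdot h_0v^{-1}h_0^{-1},\,vk\bigr)$ — this is well defined, i.e.\ lands back in $L\times K$, because $h_0^2\in K$, and it is a group action because $(h_0v^{-1}h_0^{-1})(h_0w^{-1}h_0^{-1})=h_0(wv)^{-1}h_0^{-1}$ — and verify that $\beta(p)=\beta(q)$ precisely when $p$ and $q$ lie in one $L$-orbit. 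Since this action is free (already on the second coordinate), $\abs{T\cap D}=\abs{L\times K}/\abs L=\abs K$. Summing over the self-inverse double cosets then gives $\abs T=\abs K\cdot\#\{KxK\mid x^{-1}\in KxK\}$, which combined with the first paragraph finishes the proof. The only delicate parts are the two directions of ``$\beta(p)=\beta(q)$ iff same orbit''; both come down to using the membership relations $v,u\in L$ and the identity $h_0^2\in K$ to push factors into and out of $K$.
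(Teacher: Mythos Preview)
Your proof is correct, but it follows a genuinely different route from the paper's.

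The paper realizes double cosets as $\Delta G$-orbits on $K\backslash G\times K\backslash G$ via $(Kx,Ky)\mapsto Kyx^{-1}K$, observes that self-inverse double cosets correspond to \emph{symmetric} pairs (those whose swap lies in the same orbit), and then applies Burnside's lemma to count the $\Delta G$-orbits on the set $X$ of symmetric pairs. Unwinding the fixed-point count $\sum_g |X^g|$ produces the right-hand side directly. So the paper's argument is essentially a one-line application of orbit counting once the right $G$-set has been identified.

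Your argument instead pivots on the auxiliary set $T=\{h\in G\mid h^2\in K\}$, reducing both sides to $|T|$: the right-hand side via the coset-by-coset bijection $g\mapsto xgx^{-1}$, and the left-hand side via the claim that $|T\cap D|=|K|$ for every self-inverse double coset $D$. The latter you establish by an explicit free $L$-action on $L\times K$ (with $L=K\cap h_0^{-1}Kh_0$) whose orbits are exactly the fibres of $\beta$. This is more hands-on than the paper's Burnside argument and requires tracking the subgroup $L$ carefully, but it has the virtue of being completely elementary---no orbit-counting formula, just bijections and a free action---and it isolates the striking fact that every self-inverse double coset meets $T$ in exactly $|K|$ elements, which is not visible in the paper's proof.
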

We give a simpler proof Frame's lemma.
\begin{proof}
	    The function $(Kx,Ky)\mapsto Kyx^{-1}K$ induces a bijection:
    \begin{equation}\label{eq:double-coset-bijection}
        (K\backslash G \times K\backslash G)/\Delta G \xrightarrow{\sim} K\backslash G /K,
    \end{equation}
    A double coset $KxK$ is said to be self-inverse if $x^{-1}\in KxK$.
    A pair $(Kx, Ky)$ is said to be symmetric if $(Ky, Kx)$ lies in the same $\Delta G$-orbit as $(Kx, Ky)$.
    Let $X\subset K\backslash G \times K\backslash G$ be the set of symmetric pairs.
    The bijection~\eqref{eq:double-coset-bijection} restricts to a bijection between $X/\Delta G$ and the set of self-inverse double cosets.

    Hence, by Burnside's lemma, the left side of~\eqref{eq:frame-3.1} can be expressed as
    \begin{align*}
        \frac 1{|G|}\sum_{g\in G} |X^g| &= \frac 1{|G|} \sum_{g\in G}\quad \sum_{\{(Kx, Ky)\mid \exists u\in G, Kxu=Ky=Kyg, Kyu=Kx=Kxg\}} 1\\
        & = \frac 1{|G|} \sum_{\{(Kx, Ky, u, g) \mid Kxu=Ky=Kyg, Kyu=Kx=Kxg \}} \frac 1{|\Stab_G(Kx,Ky)|}\\
        & = \frac 1{|G|} \sum_{\{(Kx, u) \mid Kxu^2 = Kx\}} \sum_{\{g\mid Kxg = Kx, Kxug = Kxu\} } \frac 1{|\Stab_G(Kx, Kxu)|}.
    \end{align*}
    The inner sum in the last step evaluates to $1$, giving the right hand side of~\eqref{eq:frame-3.1}.
\end{proof}
\begin{proof}[Frame's proof of Theorem~\ref{theorem:trace-dagger-hecke}]
	For any $g\in G$, we have
	\begin{displaymath}
		\tr(g^2; \CC[K\backslash G]) = \#\{Kx\in K\backslash G \mid xg^2 \in  Kx\}.
	\end{displaymath}
	Averaging over $g\in G$, and applying~\eqref{eq:frame-3.1}, we get
	\begin{align}\label{eq:average-trace-1}
		\frac 1{|G|}\tr(g^2;\CC[K\backslash G]) &= \frac 1{|G|}\#\{(Kx, g) \mid xg^2 \in  Kx\} \\
		\nonumber & = \#\{KxK\mid x^{-1} \in KxK\}
	\end{align}
	On the other hand the Wedderburn decomposition of $G[G]$ yields
	\begin{displaymath}
		\CC[K\backslash G] \cong \bigoplus_{V\in \hat G}  V \otimes V^K,
	\end{displaymath}
	so that
	\begin{displaymath}
		\tr(g^2;\CC[K\backslash G]) = \sum_{V\in \hat G} \dim V^K\chi_V(g^2).
	\end{displaymath}
	Averaging over $g\in G$ and using the Frobenius-Schur indicator, we get
	\begin{equation}\label{eq:average-trace-2}
		\frac 1{|G|}\sum_{g\in G} \tr(g^2;\CC[K\backslash G]) = \sum_{V\in \hat G} \dim V^K \sigma(V).
	\end{equation}
	Comparing with~\eqref{eq:average-trace-1} gives the desired result.
\end{proof}
\begin{theorem}\label{theorem:easy-gelfand}
	Let $G$ be a finite group and $K$ be a subgroup.
	Then $KgK=Kg^{-1}K$ for all $g\in G$ if and only if the following conditions hold for every irreducible representation $V$ of $G$:
	\begin{enumerate}[1.]
		\item $\dim(V^K)\leq 1$.
		\item If $\dim(V^K)=1$, then $\sigma(V)=1$.
	\end{enumerate}
\end{theorem}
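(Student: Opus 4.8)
The plan is to combine three facts already in hand: the double-coset count $\sum_{V\in\hat G}\dim(V^K)^2 = \abs{K\backslash G/K}$ (Theorem~\ref{theorem:dimension-hecke-algebra}), the commutativity criterion together with Gelfand's trick (Theorem~\ref{theorem:commutativity}), and Frame's identity $\sum_{V\in\hat G}\sigma(V)\dim(V^K) = \#\{KgK : KgK = Kg^{-1}K\}$ (Theorem~\ref{theorem:trace-dagger-hecke}). The only extra ingredients are the elementary facts that $\sigma(V)\in\{-1,0,1\}$, so $1-\sigma(V)\ge 0$ for every $V$, and that $n\le n^2$ for every nonnegative integer $n$, with equality exactly when $n\in\{0,1\}$.

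For the forward implication I would assume $KgK = Kg^{-1}K$ for all $g\in G$. Gelfand's trick then makes $H(G,K)$ commutative, so $\dim V^K\le 1$ for all $V\in\hat G$ by Theorem~\ref{theorem:commutativity}; this is condition~(1). For condition~(2), note that every double coset is now self-inverse, so $\#\{KgK : KgK = Kg^{-1}K\} = \abs{K\backslash G/K}$. Comparing Frame's identity with Theorem~\ref{theorem:dimension-hecke-algebra} and using $\dim(V^K)^2=\dim(V^K)$ (which holds because $\dim V^K\in\{0,1\}$), one obtains
\[
	\sum_{V\in\hat G}\sigma(V)\dim(V^K) = \sum_{V\in\hat G}\dim(V^K),
\]
hence $\sum_{V\in\hat G}(1-\sigma(V))\dim(V^K)=0$. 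Each summand is nonnegative, so each vanishes, giving $\sigma(V)=1$ whenever $\dim V^K=1$.

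For the converse I would assume (1) and (2). Condition~(1) gives $\dim(V^K)^2=\dim(V^K)$ for every $V\in\hat G$, and conditions~(1)--(2) together give $\sigma(V)\dim(V^K)=\dim(V^K)$ for every $V\in\hat G$ (both sides vanish when $\dim V^K=0$, and both equal $1$ when $\dim V^K=1$, by (2)). Therefore
\begin{align*}
	\#\{KgK : KgK = Kg^{-1}K\} &= \sum_{V\in\hat G}\sigma(V)\dim(V^K)\\
	&= \sum_{V\in\hat G}\dim(V^K) = \sum_{V\in\hat G}\dim(V^K)^2 = \abs{K\backslash G/K},
\end{align*}
using Frame's identity, then the two preceding equalities, then Theorem~\ref{theorem:dimension-hecke-algebra}. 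The set of self-inverse double cosets thus exhausts all double cosets, so $KgK=Kg^{-1}K$ for all $g\in G$.

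This is essentially bookkeeping once Frame's theorem is available, so I do not expect a serious obstacle; the one point that must not be glossed over is the sign constraint $\sigma(V)\le 1$, which is exactly what converts the equality of the two weighted sums into a termwise conclusion in the forward direction. As an aside, one can avoid invoking Gelfand's trick in the forward direction altogether by chaining $\abs{K\backslash G/K} = \sum_{V\in\hat G}\dim(V^K)^2 \ge \sum_{V\in\hat G}\dim(V^K) \ge \sum_{V\in\hat G}\sigma(V)\dim(V^K) = \abs{K\backslash G/K}$, which forces equality throughout and hence both $\dim V^K\in\{0,1\}$ and $\sigma(V)=1$ when $\dim V^K=1$ at once.
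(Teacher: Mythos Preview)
Your proposal is correct and follows essentially the same route as the paper: the paper's proof simply observes that $KgK=Kg^{-1}K$ for all $g$ is equivalent, via Theorems~\ref{theorem:dimension-hecke-algebra} and~\ref{theorem:trace-dagger-hecke}, to $\sum_V \dim(V^K)^2 = \sum_V \sigma(V)\dim(V^K)$, and then asserts that this equality is equivalent to conditions~(1) and~(2). Your ``aside'' chain of inequalities is precisely the implicit justification of that last assertion, so the paper's argument is in fact your alternative version; your main forward argument, routing through Gelfand's trick to get condition~(1) first, is a harmless detour that arrives at the same place.
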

\begin{proof}
	By Theorems~\ref{theorem:dimension-hecke-algebra} and~\ref{theorem:trace-dagger-hecke}, $KgK=Kg^{-1}K$ for all $g\in G$ if and only if
	\begin{displaymath}
		\sum_{V\in \hat G} \dim(V^K)^2 = \sum_{V\in \hat G} \sigma(V)\dim(V^K).
	\end{displaymath}
	Such an equality holds if and only if $\dim(V^K)\leq 1$ for all $V\in \hat G$ and $\sigma(V)=1$ whenever $\dim(V^K)=1$.
	\end{proof}
\begin{example}
	Let $G=GL_n(\Fq)$ and $K=P_k(\Fq)$ be the subgroup that fixes a $k$-dimensional subspace in $\Fq^n$.
	Then $KgK=Kg^{-1}K$ for all $g\in G$, so any irreducible representation of $GL_n(\Fq)$ that has invariant vectors under $P_k(\Fq)$ has Frobenius-Schur indicator $1$.
\end{example}
\subsection{Application to Kronecker Coefficients}\label{section:application-kronecker}
Recall that $\rconj_d(G)$ denotes the set of real simultaneous conjugacy classes in $G^d$.
Applying Theorem~\ref{theorem:trace-dagger-hecke} to Kronecker-Hecke algebras gives
\begin{theorem}\label{theorem:trace-dagger-kron}
	Let $G$ be a finite group and $d$ be a positive integer.
	Then
	\begin{displaymath}
		|\rconj_d(G)| = \sum_{(V_1,\dotsc,V_{d+1})\in \hat G^{d+1}} \sigma(V_1)\dotsb \sigma(V_{d+1})\kappa(V_1,\dotsc,V_{d+1})
	\end{displaymath}
	is equal to the number of real simultaneous conjugacy classes of $d$-tuples in $G$.
\end{theorem}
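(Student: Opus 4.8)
The plan is to apply Frame's theorem (Theorem~\ref{theorem:trace-dagger-hecke}) to the Kronecker--Hecke algebra $\kron_d(G)=H(G^{d+1},\Delta G)$, in the same way that Theorem~\ref{theorem:sum-of-squares} was deduced from Theorem~\ref{theorem:dimension-hecke-algebra}. Taking $G^{d+1}$ as the ambient group and $\Delta G$ as the subgroup, Theorem~\ref{theorem:trace-dagger-hecke} yields
\begin{displaymath}
	\sum_{W\in\widehat{G^{d+1}}}\sigma(W)\dim\bigl(W^{\Delta G}\bigr)=\#\{\Delta G\,g\,\Delta G\in\Delta G\backslash G^{d+1}/\Delta G\mid \Delta G\,g\,\Delta G=\Delta G\,g^{-1}\,\Delta G\}.
\end{displaymath}
It then remains to identify both sides with the quantities in the statement.

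For the left-hand side, every $W\in\widehat{G^{d+1}}$ has the form $W=V_1\otimes\dotsb\otimes V_{d+1}$ with $(V_1,\dotsc,V_{d+1})\in\hat G^{d+1}$, and $\dim(W^{\Delta G})=\kappa(V_1,\dotsc,V_{d+1})$ as already used in the proof of Theorem~\ref{theorem:sum-of-squares}. I would then observe that the Frobenius--Schur indicator is multiplicative for external tensor products: since $\chi_W\bigl((g_1,\dotsc,g_{d+1})^2\bigr)=\prod_{i}\chi_{V_i}(g_i^2)$, averaging over $G^{d+1}$ splits as a product of averages over $G$, so $\sigma(W)=\sigma(V_1)\dotsb\sigma(V_{d+1})$. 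Thus the left-hand side becomes $\sum_{(V_1,\dotsc,V_{d+1})}\sigma(V_1)\dotsb\sigma(V_{d+1})\kappa(V_1,\dotsc,V_{d+1})$.

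For the right-hand side, I would invoke the bijection of Lemma~\ref{lemma:bijection} between $\conj_d(G)$ and $\Delta G\backslash G^{d+1}/\Delta G$, which sends the class of $(g_1,\dotsc,g_d)$ to the double coset of $(g_1,\dotsc,g_d,1)$. Under this bijection the inverse $(g_1^{-1},\dotsc,g_d^{-1},1)$ of $(g_1,\dotsc,g_d,1)$ in $G^{d+1}$ corresponds to the class of $(g_1^{-1},\dotsc,g_d^{-1})$, namely the componentwise inverse of the class of $(g_1,\dotsc,g_d)$. Hence a double coset is self-inverse precisely when the corresponding simultaneous conjugacy class is real, so the right-hand side equals $|\rconj_d(G)|$, and comparing the two expressions finishes the proof.

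The only steps needing real verification are the multiplicativity of the Frobenius--Schur indicator over a direct product, which is the short character computation indicated above, and the compatibility of the self-inverse double-coset condition with componentwise inversion under Lemma~\ref{lemma:bijection}; both are routine bookkeeping, and I do not anticipate any genuine obstacle.
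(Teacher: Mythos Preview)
Your proposal is correct and follows exactly the approach the paper takes: the paper simply states that Theorem~\ref{theorem:trace-dagger-kron} is obtained by applying Theorem~\ref{theorem:trace-dagger-hecke} to the Kronecker--Hecke algebra, without spelling out the details. You have correctly filled in the two ingredients needed---multiplicativity of the Frobenius--Schur indicator for external tensor products and the compatibility of the bijection in Lemma~\ref{lemma:bijection} with componentwise inversion---and both verifications are routine as you note.
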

For $d=1$, Theorem~\ref{theorem:trace-dagger-kron} is the well-known result that the number of real conjugacy classes in $G$ is equal to the number of isomorphism classes of self-dual irreducible representations of $G$.

Applying Theorem~\ref{theorem:easy-gelfand} to Kronecker-Hecke algebras gives a representation-theoretic characterization of $d$-real groups.
\begin{theorem}\label{theorem:d-real}
	Let $G$ be a finite group and $d$ be a positive integer.
	Then $G$ is $d$-real if and only if both the following conditions hold:
	\begin{enumerate}[1.]
		\item The Kronecker coefficient $\kappa(V_1,\dotsc,V_{d+1})\leq 1$ for all $(V_1,\dotsc,V_{d+1})\in \hat G^{d+1}$.
		\item If $\kappa(V_1,\dotsc,V_{d+1})=1$, then $\sigma(V_1)\dotsb \sigma(V_{d+1})=1$.
	\end{enumerate}
\end{theorem}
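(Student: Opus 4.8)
The plan is to reduce the statement to Theorem~\ref{theorem:easy-gelfand} applied to the pair $(G^{d+1},\Delta G)$, in the same spirit that Theorem~\ref{theorem:trace-dagger-kron} was deduced from Theorem~\ref{theorem:trace-dagger-hecke}. The only genuinely new ingredient is a dictionary translating $d$-realness of $G$ into the self-inverse condition on the double cosets of $\Delta G$ in $G^{d+1}$.

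First I would establish: $G$ is $d$-real if and only if $\Delta G\cdot x\cdot \Delta G = \Delta G\cdot x^{-1}\cdot\Delta G$ for every $x\in G^{d+1}$. For the forward implication this is already contained in the proof of the second assertion of Theorem~\ref{theorem:commutativity-kronecker-hecke}, where it is shown that $d$-realness forces $\Delta(G)(g_1,\dotsc,g_{d+1})\Delta(G) = \Delta(G)(g_1^{-1},\dotsc,g_{d+1}^{-1})\Delta(G)$, and in the direct product $G^{d+1}$ one has $(g_1,\dotsc,g_{d+1})^{-1} = (g_1^{-1},\dotsc,g_{d+1}^{-1})$. For the converse I would use the explicit bijection of Lemma~\ref{lemma:bijection}: the simultaneous conjugacy class of $(g_1,\dotsc,g_d)$ corresponds to the double coset of $(g_1,\dotsc,g_d,1)$, whose inverse in $G^{d+1}$ is $(g_1^{-1},\dotsc,g_d^{-1},1)$, which under the inverse map $(h_1,\dotsc,h_{d+1})\mapsto(h_1h_{d+1}^{-1},\dotsc,h_dh_{d+1}^{-1})$ corresponds to the simultaneous conjugacy class of $(g_1^{-1},\dotsc,g_d^{-1})$. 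Hence a double coset is self-inverse exactly when the corresponding simultaneous conjugacy class is closed under componentwise inversion, i.e.\ is real; so all double cosets are self-inverse if and only if all simultaneous conjugacy classes are real, which is exactly $d$-realness of $G$.

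Next I would apply Theorem~\ref{theorem:easy-gelfand} with $G$ replaced by $G^{d+1}$ and $K=\Delta G$: all double cosets of $\Delta G$ in $G^{d+1}$ are self-inverse if and only if, for every irreducible representation $W$ of $G^{d+1}$, one has $\dim W^{\Delta G}\le 1$ and $\sigma(W)=1$ whenever $\dim W^{\Delta G}=1$. Every irreducible $W$ of $G^{d+1}$ has the form $V_1\otimes\dotsb\otimes V_{d+1}$ with $V_i\in\hat G$; as recorded in the proof of Theorem~\ref{theorem:sum-of-squares}, $\dim W^{\Delta G}=\kappa(V_1,\dotsc,V_{d+1})$, and the Frobenius--Schur indicator is multiplicative over direct products, $\sigma(V_1\otimes\dotsb\otimes V_{d+1})=\sigma(V_1)\dotsb\sigma(V_{d+1})$, since $\chi_W((g_1,\dotsc,g_{d+1})^2)=\prod_i\chi_{V_i}(g_i^2)$ and the defining average over $G^{d+1}$ factors as a product of averages over $G$. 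Substituting these two identities turns the conclusion of Theorem~\ref{theorem:easy-gelfand} verbatim into conditions (1) and (2) of the present theorem, finishing the argument.

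The main obstacle is purely the bookkeeping in the first step: one must check that the bijection of Lemma~\ref{lemma:bijection} intertwines ``passing to the inverse of a double coset representative'' with ``componentwise inversion of a simultaneous conjugacy class,'' and in particular that appending a trailing $1$ is compatible with inversion because $1^{-1}=1$. Once this dictionary is in place the result is a direct translation of Theorem~\ref{theorem:easy-gelfand}; the multiplicativity of $\sigma$ over direct products is standard (and is already implicit in the proof of Theorem~\ref{theorem:trace-dagger-kron}), so I would invoke it without further detail.
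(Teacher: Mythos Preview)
Your proposal is correct and follows exactly the approach the paper takes: the paper simply states that Theorem~\ref{theorem:d-real} is obtained by ``applying Theorem~\ref{theorem:easy-gelfand} to Kronecker-Hecke algebras,'' and your write-up supplies precisely the dictionary (via Lemma~\ref{lemma:bijection} and the computation in the proof of Theorem~\ref{theorem:commutativity-kronecker-hecke}) needed to make that application go through. The only content you add beyond the paper is the explicit verification of the converse direction of the equivalence ``$G$ is $d$-real $\Leftrightarrow$ every $\Delta G$-double coset is self-inverse,'' which the paper leaves implicit.
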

\subsection{Enumerating Simultaneous Real $d$-tuples}\label{section:enumeration}
For $g\in G$, let $r(g)$ denote the number of elements $x\in G$ such that $x^2=g$ (the number of square roots of $g$).
Frobenius and Schur~\cite{FS} showed that
\begin{equation}\label{eq:square-roots}
	r(g) = \sum_{U\in \hat G} \sigma(U)\chi_U(g).
\end{equation}
\begin{theorem}\label{theorem:rational-gf}
	Let $G$ be a finite group and $d$ be a positive integer.
	Then
	\begin{displaymath}
		|\rconj_d(G)| = \frac 1{|G|}\sum_{g\in G}r(g)^{d+1}.
	\end{displaymath}
\end{theorem}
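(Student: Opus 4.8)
The plan is to deduce the formula directly from the two results that immediately precede it: Theorem~\ref{theorem:trace-dagger-kron}, which already identifies $|\rconj_d(G)|$ with the signed sum $\sum_{(V_1,\dotsc,V_{d+1})\in\hat G^{d+1}}\sigma(V_1)\dotsm\sigma(V_{d+1})\kappa(V_1,\dotsc,V_{d+1})$, and the Frobenius--Schur formula~\eqref{eq:square-roots}, which writes $r(g)=\sum_{U\in\hat G}\sigma(U)\chi_U(g)$. So it remains only to recognize that signed sum of Kronecker coefficients as the $(d+1)$st moment $\frac1{|G|}\sum_{g\in G}r(g)^{d+1}$.

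First I would rewrite the generalized Kronecker coefficient as a character average. Since the character of $V_1\otimes\dotsb\otimes V_{d+1}$ is the pointwise product $\chi_{V_1}\dotsm\chi_{V_{d+1}}$, and $\kappa(V_1,\dotsc,V_{d+1})$ is the multiplicity of the trivial representation in this tensor product,
\begin{displaymath}
	\kappa(V_1,\dotsc,V_{d+1}) = \frac1{|G|}\sum_{g\in G}\chi_{V_1}(g)\dotsm\chi_{V_{d+1}}(g),
\end{displaymath}
with no complex conjugation needed because the trivial character is real. Substituting this into the expression from Theorem~\ref{theorem:trace-dagger-kron} and interchanging the two finite summations gives
\begin{displaymath}
	|\rconj_d(G)| = \frac1{|G|}\sum_{g\in G}\ \prod_{i=1}^{d+1}\Big(\sum_{V_i\in\hat G}\sigma(V_i)\chi_{V_i}(g)\Big).
\end{displaymath}
By Eq.~\eqref{eq:square-roots} each inner factor equals $r(g)$, so the product is $r(g)^{d+1}$ and the theorem follows. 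There is essentially no obstacle here beyond the bookkeeping just indicated; all the manipulations are of finite sums, and the only point requiring a moment's care is that $\langle\chi_{V_1}\dotsm\chi_{V_{d+1}},1\rangle$ carries no conjugate.

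For robustness I would also record the purely combinatorial argument, which bypasses the Frobenius--Schur machinery altogether. The quantity $\sum_{g\in G}r(g)^{d+1}$ counts tuples $(x_0,\dotsc,x_d)\in G^{d+1}$ with $x_0^2=\dotsb=x_d^2$, and the change of variables $y_i=x_0^{-1}x_i$ (for $i=1,\dotsc,d$) is a bijection of $G^{d+1}$ identifying this set with $\{(x,y_1,\dotsc,y_d)\in G^{d+1}\mid xy_ix^{-1}=y_i^{-1}\text{ for }i=1,\dotsc,d\}$. For a fixed $d$-tuple $(y_1,\dotsc,y_d)$, the set of admissible $x$ is empty unless $(y_1,\dotsc,y_d)$ is simultaneously conjugate to $(y_1^{-1},\dotsc,y_d^{-1})$, and in that case it is a coset of the common centralizer $C_G(y_1,\dotsc,y_d)$, hence has size $|C_G(y_1,\dotsc,y_d)|$. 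Summing $|C_G(y_1,\dotsc,y_d)|=|G|/|\text{orbit}|$ over the tuples in each real simultaneous conjugacy class (via orbit--stabilizer) contributes $|G|$ per class, so the total is $|G|\cdot|\rconj_d(G)|$; dividing by $|G|$ gives Theorem~\ref{theorem:rational-gf} again. Either route finishes the proof; I expect the character-theoretic one to be the shortest to write, the combinatorial one to be the most self-contained.
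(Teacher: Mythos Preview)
Your character-theoretic argument is line-for-line the paper's own proof: start from Theorem~\ref{theorem:trace-dagger-kron}, expand $\kappa(V_1,\dotsc,V_{d+1})$ as $\frac1{|G|}\sum_{g}\chi_{V_1}(g)\dotsm\chi_{V_{d+1}}(g)$, swap the sums, and invoke~\eqref{eq:square-roots}. Your second, combinatorial derivation (counting $(d+1)$-tuples with a common square and reparametrizing by $y_i=x_0^{-1}x_i$) is correct as well and does not appear in the paper; it gives a self-contained proof that avoids Theorem~\ref{theorem:trace-dagger-kron} and the Frobenius--Schur formula entirely, at the cost of not tying the result back to Kronecker coefficients.
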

\begin{proof}
	By Theorem~\ref{theorem:trace-dagger-kron},
	\begin{align*}
		|\rconj_d(G)| & = \sum_{(U_1,\dotsc,U_{d+1})\in \hat G^{d+1}} \sigma(U_1)\dotsb \sigma(U_{d+1})\kappa(U_1,\dotsc,U_{d+1})\\
		& = \sum_{(U_1,\dotsc,U_{d+1})\in \hat G^{d+1}} \sigma(U_1)\dotsb \sigma(U_{d+1}) \frac 1{|G|}\sum_{g\in G} \chi_{U_1}(g)\dotsb \chi_{U_{d+1}}(g)\\
		& = \frac 1{|G|}\sum_{g\in G} \Big(\sum_{U\in \hat G} \sigma(U)\chi_U(g)\Big)^{d+1}\\
		& = \frac 1{|G|}\sum_{g\in G} r(g)^{d+1} & \text{[by~\eqref{eq:square-roots}]},
	\end{align*}
	as claimed.
\end{proof}
Let $r_{\max}(G) = \max\{r(g)\mid g\in G\}$.
We have:
\begin{corollary}\label{corollary:exponential-growth}
	For any finite group $G$, the number of real simultaneous conjugacy classes of $d$-tuples in $G$ grows exponentially with exponential growth factor $r_{\max}(G)$.
\end{corollary}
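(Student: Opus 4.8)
The plan is to feed Theorem~\ref{theorem:rational-gf} into an elementary sandwich argument. Write $m = r_{\max}(G)$ and let $N = \#\{g\in G : r(g) = m\}$. Since the identity is a square root of itself, $r(1)\geq 1$, so $m\geq 1$ and $N\geq 1$. By Theorem~\ref{theorem:rational-gf} we have $|\rconj_d(G)| = \frac1{|G|}\sum_{g\in G} r(g)^{d+1}$, and since $0\leq r(g)\leq m$ for every $g\in G$, I would bound the sum below by discarding all terms except those with $r(g)=m$, and bound it above by replacing every $r(g)$ by $m$, obtaining
\begin{displaymath}
	\frac{N}{|G|}\,m^{d+1} \;\leq\; |\rconj_d(G)| \;\leq\; m^{d+1}.
\end{displaymath}

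Next, interpreting the exponential growth factor of the sequence $d\mapsto |\rconj_d(G)|$ as $\lim_{d\to\infty} |\rconj_d(G)|^{1/d}$, I would take $d$th roots throughout the displayed inequality. The upper bound gives $|\rconj_d(G)|^{1/d}\leq m^{(d+1)/d}\to m$, and the lower bound gives $|\rconj_d(G)|^{1/d}\geq (N/|G|)^{1/d}\,m^{(d+1)/d}\to m$, since $(N/|G|)^{1/d}\to 1$. Hence $\lim_{d\to\infty}|\rconj_d(G)|^{1/d} = m = r_{\max}(G)$, which is the claim.

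There is essentially no obstacle here; the content is entirely in Theorem~\ref{theorem:rational-gf}, and what remains is the two-line estimate above. The only point worth a remark is the degenerate case $m=1$ (e.g.\ $G$ trivial, or more generally any $G$ in which no element has two distinct square roots): there the same inequalities force $|\rconj_d(G)| = 1$ for all $d$, so the sequence is constant and its growth factor is indeed $1 = r_{\max}(G)$, consistently with the general statement.
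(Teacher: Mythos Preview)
Your argument is correct and is exactly the intended one: the paper states the corollary immediately after Theorem~\ref{theorem:rational-gf} with no further proof, and the sandwich estimate you wrote out is precisely the elementary observation the reader is meant to supply. One tiny remark on your final parenthetical: the inequalities alone give only $N/|G|\le |\rconj_d(G)|\le 1$ when $m=1$; to conclude equality you are implicitly using either that $|\rconj_d(G)|$ is a positive integer (the identity tuple is always a real class) or that $\sum_{g} r(g)=|G|$ forces $r(g)\equiv 1$ and hence $N=|G|$.
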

\begin{example}
	When $G$ is the quaternion group $Q_8=\{\pm 1, \pm\ii, \pm\jj, \pm\kk\}$, $r_{\max}(G) = r(-1)=6$.
	In contrast, for groups where $\sigma(U)\geq 0$ for all $U\in \hat G$,~\eqref{eq:square-roots} implies $r_{\max}(G) = r(\mathrm{id}_G)$, the number of elements $g\in G$ with $g=g^{-1}$.
\end{example}
\section{Doubly Real Groups}\label{section:doubly-real}
\subsection{A representation-theoretic characterization}\label{section:representation-theoretic-characterization}
\begin{theorem}\label{theorem:higher-reality}
	If $G$ is $3$-real, then $G$ is an elementary Abelian $2$-group.
\end{theorem}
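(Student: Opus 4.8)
The plan is to prove the theorem in two stages: first deduce that a $3$-real group $G$ must be Abelian, and then use the reality hypothesis a second time to show that every element of $G$ is an involution.

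For the first stage, I would argue directly. Fix $g,h\in G$ and apply the definition of $3$-reality to the triple $(g,h,gh)\in G^3$: its simultaneous conjugacy class is closed under componentwise inversion, so there is an element $x\in G$ with
\begin{displaymath}
	xgx^{-1}=g^{-1},\qquad xhx^{-1}=h^{-1},\qquad x(gh)x^{-1}=(gh)^{-1}.
\end{displaymath}
Multiplying the first two relations gives $g^{-1}h^{-1}=(xgx^{-1})(xhx^{-1})=x(gh)x^{-1}=(gh)^{-1}=h^{-1}g^{-1}$, whence $gh=hg$. Since $g,h$ were arbitrary, $G$ is Abelian. (Alternatively, one can reach the same conclusion without the triple trick: by the second part of Theorem~\ref{theorem:commutativity-kronecker-hecke} a $3$-real group has multiplicity-free $3$-fold tensor products, and Theorem~\ref{theorem:higher-hecke-commutative} then forces commutativity. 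I prefer the self-contained argument above.)

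For the second stage, note that once $G$ is Abelian the simultaneous conjugation action on $G^3$ is trivial, so each simultaneous conjugacy class in $G^3$ is a singleton. Reality of the class of $(g,1,1)$ then says $(g^{-1},1,1)=(g,1,1)$, i.e.\ $g^2=1$, for every $g\in G$. Together with commutativity this says exactly that $G$ is an elementary Abelian $2$-group, which completes the proof.

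I do not anticipate any serious obstacle; the argument is short. The one point worth flagging is the role of the third coordinate. The analogue with pairs fails: groups such as $S_3$ and the quaternion group $Q_8$ are $2$-real — all their Kronecker coefficients are at most $1$ and the relevant products of Frobenius--Schur indicators equal $1$, so Theorem~\ref{theorem:d-real} with $d=2$ applies — yet they are non-Abelian. So $d=3$ is genuinely the threshold at which $d$-reality collapses onto the class of elementary Abelian $2$-groups.
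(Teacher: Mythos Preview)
Your proof is correct. The second stage --- once $G$ is Abelian, conjugation is trivial, so $3$-reality forces every element to be an involution --- is exactly what the paper does (stated there tersely as ``since $G$ is real, it has to be an elementary Abelian $2$-group'').

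The first stage differs. The paper invokes its Kronecker--Hecke machinery: $3$-reality $\Rightarrow$ multiplicity-free $3$-fold tensor products (Theorem~\ref{theorem:commutativity-kronecker-hecke}) $\Rightarrow$ Abelian (Theorem~\ref{theorem:higher-hecke-commutative}), the latter via the observation that $V\otimes V'\otimes V'\otimes V$ always contains the trivial representation at least twice when $\dim V\geq 2$. Your direct argument with the triple $(g,h,gh)$ sidesteps all of this: it is purely group-theoretic, self-contained, and shorter. You do mention the paper's route as an alternative, so you clearly see both. What the paper's approach buys is that it illustrates the general framework being developed; what yours buys is that the theorem becomes independent of that framework, and transparently explains \emph{why} $d=3$ is the threshold --- the third coordinate $gh$ is exactly what lets you compare $g^{-1}h^{-1}$ with $h^{-1}g^{-1}$.
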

\begin{proof}
	If $G$ is $3$-real, then by Theorem~\ref{theorem:commutativity-kronecker-hecke}, $G$ has multiplicity-free $3$-fold tensor products.
	Hence, by Theorem~\ref{theorem:higher-hecke-commutative}, $G$ is Abelian.
	Since $G$ is real, it has to be an elementary Abelian $2$-group.
\end{proof}
If $G$ is $d$-real for $d\geq 3$, then $G$ is $3$-real, hence is Abelian.
Therefore, we will focus on the case $d=2$.
\begin{definition}
	A group $G$ is said to be \emph{doubly real} if it is $2$-real.
\end{definition}
Theorem~\ref{theorem:commutativity-kronecker-hecke} tells us that if a finite group $G$ is doubly real, then it has multiplicity-free tensor products.
In this section, we will prove that the converse is true for real groups using the following lemma.
\begin{lemma}\label{lemma:multiplcity}
	Let $G$ be a finite group, and let $U,V,W$ be irreducible self-dual representations of $G$.
	Assume that the multiplicity of $W$ in $U\otimes V$ is $1$.
	Then $\sigma(U)\sigma(V)=\sigma(W)$.
\end{lemma}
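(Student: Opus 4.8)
The plan is to recast the statement in terms of the Frobenius--Schur characterization of $\sigma$ via invariant bilinear forms. Since $U$, $V$, $W$ are irreducible and self-dual, Schur's lemma provides on each a $G$-invariant nondegenerate bilinear form $b_U$, $b_V$, $b_W$, unique up to scalar; by the discussion preceding the lemma, $b_U$ is symmetric when $\sigma(U)=1$ and alternating when $\sigma(U)=-1$, and similarly for $V$ and $W$. Set $X=U\otimes V\otimes W$ and $B=b_U\otimes b_V\otimes b_W$, a nondegenerate $G$-invariant bilinear form on $X$. Evaluating on pure tensors gives $B(y,x)=\sigma(U)\sigma(V)\sigma(W)\,B(x,y)$ for all $x,y\in X$, so $B$ is symmetric if $\sigma(U)\sigma(V)\sigma(W)=1$ and alternating (equivalently antisymmetric, as we are over $\CC$) if $\sigma(U)\sigma(V)\sigma(W)=-1$. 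Hence it suffices to show $\sigma(U)\sigma(V)\sigma(W)=1$: since $\sigma(W)^2=1$, multiplying by $\sigma(W)$ then yields $\sigma(U)\sigma(V)=\sigma(W)$.

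Next I would use the multiplicity hypothesis to locate the trivial representation inside $X$. Because $W$ is self-dual, the multiplicity of $W$ in $U\otimes V$ equals $\kappa(U,V,W)=\dim(U\otimes V\otimes W)^G=\dim\Hom_G(\mathbf 1,X)$, so by assumption the trivial representation $\mathbf 1$ occurs in $X$ with multiplicity exactly $1$. Let $L\subseteq X$ be the unique copy of $\mathbf 1$, a $G$-stable line.

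The crux is that $B$ restricts to a nonzero form on $L$. If $B|_L=0$, then $L\subseteq L^\perp$; choosing a $G$-stable complement $M$ of $L^\perp$ in the semisimple module $X$, nondegeneracy of $B$ gives $M\cong X/L^\perp\cong L^*\cong\mathbf 1$, and $M\neq L$ (otherwise $L\not\subseteq L^\perp$), contradicting that $\mathbf 1$ has multiplicity $1$ in $X$. So $B|_L\neq 0$, and since $\dim L=1$ we get $B(v,v)\neq 0$ for a spanning vector $v$. An alternating form would force $B(v,v)=0$, so $B$ is not alternating; hence $B$ is symmetric, i.e. $\sigma(U)\sigma(V)\sigma(W)=1$, and the lemma follows. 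The only delicate point is the nondegeneracy of $B|_L$, which is exactly where the multiplicity-one hypothesis is used; the rest is bookkeeping with Schur's lemma and with the symmetry type of a tensor product of bilinear forms.
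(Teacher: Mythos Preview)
Your argument is correct and is a close cousin of the paper's, but packaged a bit differently. The paper works inside $U\otimes V$: it takes the tensor form $b_U\otimes b_V$, writes $U\otimes V=W\oplus X$ with $W\not\subseteq X$, and uses multiplicity one to see that the induced isomorphism $W\oplus X\to W'\oplus X'$ must carry $W$ to $W'$, so the form restricts nondegenerately to $W$ and hence has sign $\sigma(W)$. You instead work inside $U\otimes V\otimes W$ with the triple tensor form $b_U\otimes b_V\otimes b_W$, and use multiplicity one of the trivial representation to show the form is nonzero on the invariant line $L$, forcing the sign $\sigma(U)\sigma(V)\sigma(W)$ to be $+1$. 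The underlying mechanism---a nondegenerate $G$-invariant form cannot kill a multiplicity-one isotypic piece without producing a second copy---is the same in both; your version has the mild aesthetic advantage of treating $U,V,W$ symmetrically, while the paper's version makes the conclusion $\sigma(W)=\sigma(U)\sigma(V)$ appear directly rather than via $\sigma(W)^2=1$.
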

\begin{proof}
	The hypothesis implies that $U\otimes V = W\oplus X$, where $X$ does not contain any copies of $W$.
	Say that a bilinear form $(\text{-},\text{-})$ has sign $\epsilon$ if $(y,x)=\epsilon(x,y)$ for all $x,y$.
	The hypothesis implies that there exist non-degenerate bilinear forms on $U$ and $V$ that are $G$-invariant having signs $\sigma(U)$ and $\sigma(V)$ respectively.
	These induce a non-degenerate bilinear form $B$ on $U\otimes V$ with sign $\sigma(U)\sigma(V)$.
	The corresponding isomorphism $W\oplus X\to W'\oplus X'$ must take $W\to W'$ since $X$ does not contain $W$ so $X'$ does not contain $W'\cong W$.
	Thus, the restriction of $B$ to $W\times W$ is non-degenerate, hence a $G$-invariant non-degenerate bilinear form on $W$ with sign $\sigma(U)\sigma(V)$.
	It follows that $\sigma(W)=\sigma(U)\sigma(V)$.
\end{proof}
\begin{theorem}\label{theorem:real-multiplcity-free}
	A finite group $G$ is doubly real if and only if it is real and it has multiplicity-free tensor products.
\end{theorem}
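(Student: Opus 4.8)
The plan is to prove the two directions separately, using Theorem~\ref{theorem:commutativity-kronecker-hecke} for the easy direction and Lemma~\ref{lemma:multiplcity} together with Theorem~\ref{theorem:d-real} for the hard one. For the forward implication, suppose $G$ is doubly real. By Theorem~\ref{theorem:commutativity-kronecker-hecke}(2) (with $d=2$), $G$ has multiplicity-free tensor products. To see that $G$ is real, observe that $1$-real is a consequence of $2$-real: if every pair $(g_1,g_2)$ is simultaneously conjugate to $(g_1^{-1},g_2^{-1})$, then taking $g_2=1$ shows every $g_1$ is conjugate to $g_1^{-1}$. Hence $G$ is real, and the forward direction is done.

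For the converse, assume $G$ is real and has multiplicity-free tensor products. I want to invoke Theorem~\ref{theorem:d-real} with $d=2$: it suffices to check that $\kappa(V_1,V_2,V_3)\leq 1$ for all triples of irreducibles (which is exactly the multiplicity-free hypothesis, since $\kappa(V_1,V_2,V_3)$ is the multiplicity of $V_3'$ in $V_1\otimes V_2$ and contragredients permute $\hat G$), and that whenever $\kappa(V_1,V_2,V_3)=1$ we have $\sigma(V_1)\sigma(V_2)\sigma(V_3)=1$. Since $G$ is real, every irreducible is self-dual, so each $\sigma(V_i)\in\{1,-1\}$ and $V_3'\cong V_3$. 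Thus $\kappa(V_1,V_2,V_3)=1$ says precisely that $W:=V_3$ occurs with multiplicity $1$ in $U\otimes V$ where $U=V_1$, $V=V_2$, all three self-dual. Lemma~\ref{lemma:multiplcity} then gives $\sigma(V_1)\sigma(V_2)=\sigma(V_3)$, i.e. $\sigma(V_1)\sigma(V_2)\sigma(V_3)=\sigma(V_3)^2=1$. Both conditions of Theorem~\ref{theorem:d-real} hold, so $G$ is $2$-real, i.e. doubly real.

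The only subtlety I anticipate is bookkeeping around contragredients: one must be careful that "multiplicity-free tensor products" (multiplicity of $W$ in $U\otimes V$) is genuinely equivalent to "$\kappa(V_1,V_2,V_3)\leq 1$ for all triples", which holds because $W\mapsto W'$ is a bijection of $\hat G$ and $\kappa(V_1,V_2,V_3)$ equals the multiplicity of $V_3'$ in $V_1\otimes V_2$; under the reality hypothesis this is moot since $V_3'\cong V_3$, but for the multiplicity-free condition itself one should note it already without assuming reality. There is no hard analytic or combinatorial obstacle here — the content is entirely packaged into Lemma~\ref{lemma:multiplcity} and Theorem~\ref{theorem:d-real}, and the proof is a short assembly of those.
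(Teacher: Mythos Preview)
Your proposal is correct and follows essentially the same route as the paper: the forward direction uses that $2$-real implies multiplicity-free tensor products (the paper cites Theorem~\ref{theorem:d-real} rather than Theorem~\ref{theorem:commutativity-kronecker-hecke}, but either works) together with the trivial observation that $2$-real implies $1$-real, and the converse is exactly the paper's argument via Lemma~\ref{lemma:multiplcity} and Theorem~\ref{theorem:d-real}. Your extra remarks on contragredient bookkeeping are sound and make explicit what the paper leaves implicit.
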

\begin{proof}
	Theorem~\ref{theorem:d-real} tells us that if $G$ is doubly real, then it has multiplicity-free tensor products.
	For the converse, suppose a real group $G$ has multiplicity-free tensor products.
	By Lemma~\ref{lemma:multiplcity}, for all irreducible representations $U,V,W$ of $G$ such that $\kappa(U,V,W)=1$, we have $\sigma(U)\sigma(V)\sigma(W)=1$.
	Hence, by Theorem~\ref{theorem:d-real}, $G$ is doubly real.
\end{proof}
\subsection{Example: Generalized Dihedral Groups}
\begin{definition}
	Let $A$ be an Abelian group.
	The generalized dihedral group associated to $A$ is the semidirect product $D(A) = A\rtimes C_2$ where $C_2=\langle s\rangle$, a cyclic group of order $2$, acts on $A$ by $s\cdot a = a^{-1}$.
\end{definition}
\begin{remark}
	For every positive integer $n$, let $C_n$ denote the cyclic group of order $n$. Then $D(C_n)$ is the dihedral group of order $2n$.
	The generalized dihedral group $D(S^1)$ associated to the circle group $S^1$ is the group $O_2(\mathbf R)$.
\end{remark}
\begin{theorem}\label{theorem:generalized-dihedral}
	For every Abelian group $A$, the generalized dihedral group $D(A)$ is doubly real.
\end{theorem}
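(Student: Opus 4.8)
The plan is to verify $2$-reality directly from the definition: given an arbitrary pair $(g_1,g_2)\in D(A)^2$, I will produce $g\in D(A)$ with $g g_1 g^{-1}=g_1^{-1}$ and $g g_2 g^{-1}=g_2^{-1}$ simultaneously. Write each element of $D(A)=A\rtimes\langle s\rangle$ either as $a$ with $a\in A$ (a \emph{rotation}) or as $as$ with $a\in A$ (a \emph{reflection}). Two elementary computations in the semidirect product suffice. First, since $s$ acts on $A$ by inversion and $A$ is Abelian, conjugation of any rotation $a$ by any reflection $bs$ gives $(bs)\,a\,(bs)^{-1}=b a^{-1} b^{-1}=a^{-1}$. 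Second, every reflection is an involution: $(bs)^2=b\,(sbs^{-1})\,s^2=b\cdot b^{-1}\cdot 1=1$, so $(bs)^{-1}=bs$.

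Next I carry out a short case analysis on the number $k\in\{0,1,2\}$ of reflections among $g_1$ and $g_2$. If $k=0$, both $g_1$ and $g_2$ lie in $A$, and $g=s$ works by the first computation. If $k=2$, both $g_1$ and $g_2$ are involutions, so $(g_1^{-1},g_2^{-1})=(g_1,g_2)$ and $g=1$ works trivially. If $k=1$, say $g_2$ is the reflection and $g_1\in A$ (the case where $g_1$ is the reflection is symmetric); take $g=g_2$: conjugation by the reflection $g_2$ sends the rotation $g_1$ to $g_1^{-1}$, while it fixes $g_2$, which is its own inverse. In every case the required $g$ exists, so every simultaneous conjugacy class in $D(A)^2$ contains its componentwise inverse, i.e.\ $D(A)$ is doubly real.

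There is no genuine obstacle here; the only point needing care is that conjugation of a reflection by a reflection need \emph{not} fix it --- one computes $(a_2 s)(a_1 s)(a_2 s)^{-1}=a_2^2 a_1^{-1} s$, which equals $a_1 s$ exactly when $a_2^2=a_1^2$ --- so in the case $k=2$ one cannot in general use $g=g_1$ or $g=g_2$, and must instead invoke that the pair is already componentwise self-inverse. Finally, by Theorem~\ref{theorem:commutativity-kronecker-hecke} this shows in particular that finite generalized dihedral groups have multiplicity-free tensor products, while Theorem~\ref{theorem:higher-reality} shows that $D(A)$ can be $3$-real only when $A$ is an elementary Abelian $2$-group, so $d=2$ is the interesting case.
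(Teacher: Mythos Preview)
Your proof is correct and follows essentially the same approach as the paper: both argue by the same three-case analysis (both elements in $A$, both in $sA$, one in each), choosing $g=s$, $g=1$, and $g=$ the reflection, respectively. The extra remark about why $g=g_1$ or $g=g_2$ fails in the $k=2$ case, and the closing sentence about $3$-reality, are harmless additions but not needed for the theorem.
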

\begin{proof}
	Let $s\in D(A)$ denote the generator of the subgroup $C_2$.
	Every element of $D(A)$ is either in $A$ or in $sA$.
	If $u\in sA$, then $u=u^{-1}$, so $gug^{-1}=u^{-1}$ if $g=1$ or $g=u$.
	If $u\in A$, then $(sa)u(sa)^{-1}=u^{-1}$ for any $a\in A$.
	Thus, $D(A)$ is real.
	To see that $D(A)$ is doubly real, let $u,v\in D(A)$.
	We seek an element $g\in G$ such that $gug^{-1}=u^{-1}$ and $gvg^{-1}=v^{-1}$.
	\begin{enumerate}[1.]
		\item If $u\in sA$ and $v\in sA$, we may take $g=1$,
		\item If $u\in sA$ and $v\in A$, we may take $g=u$,
		\item If $u$ and $v$ are both in $A$, we can take $g=s$.
	\end{enumerate}
	Thus, $D(A)$ is doubly real.
\end{proof}
\begin{corollary}
	Finite generalized dihedral groups have multiplicity-free tensor products.
\end{corollary}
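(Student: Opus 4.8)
The plan is to deduce the corollary immediately from Theorem~\ref{theorem:generalized-dihedral} together with the machinery relating double reality to multiplicity-freeness developed in Section~\ref{section:kronecker-hecke-algebra}. First I would note that a finite generalized dihedral group is precisely $D(A)$ for a finite Abelian group $A$, since $|D(A)| = 2|A|$ is finite if and only if $A$ is finite. By Theorem~\ref{theorem:generalized-dihedral}, such a $D(A)$ is doubly real, i.e.\ $2$-real in the sense of Definition~\ref{definition:d-real}.

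Next I would invoke the second assertion of Theorem~\ref{theorem:commutativity-kronecker-hecke} with $d = 2$: since $D(A)$ is $2$-real, it has multiplicity-free $2$-fold tensor products. As recorded just after the definition of multiplicity-free $d$-fold tensor products, having multiplicity-free $2$-fold tensor products is exactly the statement that $U \otimes V$ decomposes without multiplicity for all $U, V \in \hat G$, which is the definition of \emph{multiplicity-free tensor products}. This completes the argument. (Alternatively one can bypass Theorem~\ref{theorem:commutativity-kronecker-hecke} and cite Theorem~\ref{theorem:d-real} directly: double reality forces $\kappa(V_1, V_2, V_3) \le 1$ for all triples of irreducibles, which is precisely the multiplicity-free condition for $2$-fold tensor products.)

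There is essentially no obstacle here: all the content is carried by Theorem~\ref{theorem:generalized-dihedral}, whose proof handles the case distinctions on where the two elements $u$ and $v$ lie, and by the general Kronecker--Hecke algebra framework; the corollary is merely the specialization of that framework to the generalized dihedral family, with the only extra observation being that finiteness of $A$ is what makes $D(A)$ a finite group and hence keeps us within the setting of Section~\ref{section:kronecker-hecke-algebra}.
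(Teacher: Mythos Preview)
Your proposal is correct and matches the paper's own (implicit) argument: the corollary is stated without proof immediately after Theorem~\ref{theorem:generalized-dihedral}, relying on precisely the implication ``doubly real $\Rightarrow$ multiplicity-free tensor products'' from Theorem~\ref{theorem:commutativity-kronecker-hecke} (or equivalently Theorem~\ref{theorem:d-real}). Your remark that finiteness of $D(A)$ amounts to finiteness of $A$ is the only additional observation, and it is correct.
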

\subsection{Example: Heisenberg Groups}\label{section:heisenberg}
Let $\Fq$ denote a finite field with $q$ elements.
The Heisenberg group $H_n(\Fq)$ is the subgroup of $\GL_{n+2}(\Fq)$ given by
\begin{displaymath}
	H_n(\Fq) = \left\{\left(\begin{smallmatrix}
		1 & x & z\\
		0 & I_n & y\\
		0 & 0 & 1
	\end{smallmatrix}\right)\middle\vert x,y\in \Fq^n, z\in \Fq\right\}.
\end{displaymath}
Denoting the elements of $H_n(\Fq)$ by $(x,y;z)$, the multiplication rule becomes
\begin{displaymath}
	(x,y;z)\cdot (x',y';z') = (x+x',y+y';z+z'+x\cdot y').
\end{displaymath}
\begin{theorem}
	The Heisenberg group $H_n(\Fq)$ is doubly real if $q=2$.
\end{theorem}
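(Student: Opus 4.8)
The plan is to work directly in the coordinates $(x,y;z)$ and to reduce the assertion ``$(u,v)$ is simultaneously conjugate to $(u^{-1},v^{-1})$'' to the solvability of a system of two linear equations over $\mathbf{F}_2$, exploiting the fact that $-1=1$ when $q=2$.

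The first step is two short computations. With $q=2$, the inverse of $(x,y;z)$ is $(x,y;\,z+x\cdot y)$, and conjugation by $(a,b;c)$ is
\[
	(a,b;c)(x,y;z)(a,b;c)^{-1} = (x,y;\,z+a\cdot y+b\cdot x);
\]
in particular the first two coordinates are fixed and $c$ plays no role. Comparing the two formulas, $(x,y;z)$ is conjugate to its inverse via $(a,b;c)$ precisely when $a\cdot y+b\cdot x=x\cdot y$.

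Now take $u=(x_1,y_1;z_1)$ and $v=(x_2,y_2;z_2)$. By the previous step, an element $g=(a,b;0)$ conjugates $u$ to $u^{-1}$ and $v$ to $v^{-1}$ if and only if $(a,b)\in\mathbf{F}_2^n\times\mathbf{F}_2^n$ solves
\[
	a\cdot y_1+b\cdot x_1=x_1\cdot y_1,\qquad a\cdot y_2+b\cdot x_2=x_2\cdot y_2 .
\]
Regard the left side of the $i$th equation as the value at $(a,b)\in\mathbf{F}_2^{2n}$ of the linear functional $\ell_i=(y_i,x_i)$. If $\ell_1,\ell_2$ are linearly independent, then $(a,b)\mapsto(\ell_1(a,b),\ell_2(a,b))$ is surjective onto $\mathbf{F}_2^2$ and the system is solvable for any right-hand side. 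If $\ell_1,\ell_2$ are dependent, then over $\mathbf{F}_2$ either one of them vanishes, say $\ell_1=0$, in which case $x_1=y_1=0$, the right-hand side $x_1\cdot y_1$ equals $0$, and the system collapses to the single equation $\ell_2(a,b)=x_2\cdot y_2$, which is solvable (trivially if $\ell_2=0$, since then $x_2\cdot y_2=0$ as well, and by surjectivity otherwise); or $\ell_1=\ell_2\neq0$, in which case $x_1=x_2$ and $y_1=y_2$, so the two right-hand sides agree and the remaining equation $\ell_1(a,b)=x_1\cdot y_1$ is solvable since $\ell_1\neq0$. In every case a solution exists, so $H_n(\mathbf{F}_2)$ is $2$-real, that is, doubly real.

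The only step that is not purely formal is the consistency of the system in the linearly dependent case; it works because $x\cdot y$ is a genuine function of $(x,y)$ (so equal coefficient vectors force equal right-hand sides) and because it vanishes at $(x,y)=(0,0)$. This is where care is needed, since a bare dimension count does not handle the degenerate configurations. One could instead observe that $H_n(\mathbf{F}_2)$ is an extraspecial $2$-group (isomorphic to the dihedral group of order $8$ when $n=1$) and invoke the fact that extraspecial $2$-groups are doubly real, but the direct argument above is short and self-contained.
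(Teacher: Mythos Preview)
Your proof is correct and follows essentially the same route as the paper: both reduce the double-reality condition to the solvability of the linear system $a\cdot y_i + b\cdot x_i = x_i\cdot y_i$ ($i=1,2$) over $\mathbf{F}_2$ in the unknowns $(a,b)$. The paper simply asserts that this system always has a solution when $q=2$, whereas you supply the explicit case analysis on the linear dependence of the coefficient vectors $(y_i,x_i)$; so your argument is a fleshed-out version of the paper's, not a genuinely different approach.
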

\begin{proof}
	For $(a_i,b_i;c_i)\in H_n(\Fq)$, $i=1,2$, we seek $(x,y;z)\in H_n(\Fq)$ such that
	\begin{displaymath}
		(a_i,b_i;c_i)\cdot(x,y;z)\cdot(a_i,b_i;c_i) = (x,y;z) \text{ for } i=1,2,
	\end{displaymath}
	which (for characteristic $2$) results in the equation
	\begin{displaymath} 
		b_i\cdot x + a_i\cdot y = a_i\cdot b_i, \; i=1,2.
	\end{displaymath}
	This system of equations has a solution for all $a_1,a_2,b_1,b_2\in \Fq^n$ if $q=2$.
\end{proof}
\begin{remark}
	For $q>2$, Lemma~\ref{lemma:combinatorial} implies that $H_n(\Fq)$ does not have multiplicity-free tensor products, hence is not doubly real.
\end{remark}
\subsection{Example: Extraspecial $2$-Groups}\label{section:extraspecial}
Recall that an extraspecial $p$-group is a group $G$ such that $G'=Z(G)$ is the cyclic group $C_p$ of order $p$, and $G/Z(G)$ is an elementary Abelian $p$-group.
Given extraspecial $p$-groups $G_1$ and $G_2$, their central product $G_1\circ G_2$ is the quotient of $G_1\times G_2$ by the normal subgroup $\{(z,z)\mid z\in C_p\}$ (we have fixed an identification of $Z(G_i)$ with $C_p$ for each $i$).
Let $D_8$ denote the dihedral group of order $8$, and $Q_8$ the quaternion group.
Every extraspecial $2$-group is a central product of copies of $D_8$ and $Q_8$~\cite[Theorem~5.2]{MR231903}.
It is easy to see that a central product of doubly real extraspecial $2$-groups is doubly real, as it is a quotient of a product of doubly real groups.
Since $D_8$ and $Q_8$ are doubly real, we have the following result.
\begin{theorem}
	Every extraspecial $2$-group is doubly real.
\end{theorem}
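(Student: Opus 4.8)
The plan is to reduce to the two smallest extraspecial $2$-groups and then propagate the property through central products. First I would record that the class of doubly real groups is closed under direct products --- given $(u_1,u_2),(v_1,v_2)\in G_1\times G_2$, pick $g_i\in G_i$ simultaneously inverting $u_i$ and $v_i$ and take $g=(g_1,g_2)$ --- and under quotients --- lift a pair of elements to the ambient group, invert them simultaneously there, and project the inverting element down. Since a central product $G_1\circ G_2$ is by definition a quotient of $G_1\times G_2$, an iterated central product of doubly real groups is again doubly real; this makes explicit the remark preceding the theorem.

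Next I would invoke the classification of extraspecial $2$-groups~\cite{MR231903}: every such group is an iterated central product of copies of $D_8$ and $Q_8$. So it suffices to show that $D_8$ and $Q_8$ are doubly real. For $D_8$ this is immediate, since $D_8=D(C_4)$ is a generalized dihedral group and hence doubly real by Theorem~\ref{theorem:generalized-dihedral}. For $Q_8=\{\pm1,\pm\ii,\pm\jj,\pm\kk\}$ I would verify the condition of Definition~\ref{definition:d-real} directly: given $u,v\in Q_8$, if either of $u,v$ lies in the center $\{\pm1\}$ then it equals its own inverse, so the problem collapses to inverting the remaining element, which is possible because $Q_8$ is real; if $u,v\notin\{\pm1\}$ and $\gen u=\gen v$ (that is, $v=\pm u$), then any $g$ with $gug^{-1}=u^{-1}$ also satisfies $g(-u)g^{-1}=(-u)^{-1}$, hence inverts $v$; and if $\gen u\neq\gen v$, then $u$ and $v$ lie in two of the three order-$4$ subgroups $\gen\ii,\gen\jj,\gen\kk$, and a generator of the third one anticommutes with both and therefore conjugates each to its inverse. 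Thus $Q_8$ is doubly real. (Alternatively, $Q_8$ is doubly real because it is a real generalized quaternion group.)

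Combining the two steps yields the theorem: an extraspecial $2$-group is an iterated central product of the doubly real groups $D_8$ and $Q_8$, hence doubly real. I do not anticipate a genuine obstacle; the only points needing care are the bookkeeping in the $Q_8$ case --- arranging the case split on $\gen u$ and $\gen v$ so that one is never forced to invert two ``independent'' elements at once --- and checking that the classification is being applied in the form ``central product of $D_8$'s and $Q_8$'s'', so that the closure under quotients of direct products genuinely covers it.
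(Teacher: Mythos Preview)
Your proposal is correct and follows essentially the same route as the paper: invoke the classification of extraspecial $2$-groups as iterated central products of $D_8$ and $Q_8$, observe that central products are quotients of direct products, and use the closure of the doubly real property under direct products and quotients. The paper's argument is terser---it simply asserts that $D_8$ and $Q_8$ are doubly real---whereas you supply the explicit verifications (via Theorem~\ref{theorem:generalized-dihedral} for $D_8$ and a direct case analysis for $Q_8$), so your write-up is more self-contained but not structurally different.
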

\subsection{Example: Generalized Quaternion Groups}\label{section:generalizerd-quaternion}
Let $A$ be a finite Abelian group with a unique element $z$ of order $2$.
Let $C_4=\langle s\rangle$ denote the cyclic group of order $4$.
Let $C_4$ act on $A$ by $s\cdot a = a^{-1}$.
The element $z$ of $A$ lies in the centre of the semidirect product $A\rtimes C_4$.
The generalized quaternion group $Q(A)$ is defined as the quotient:
\begin{displaymath}
	Q(A) = (A\rtimes C_4)/\langle z \rangle.
\end{displaymath}
\begin{theorem}\label{theorem:gen-q}
	Let $A$ be a finite Abelian group with a unique element $z$ of order $2$.
	\begin{enumerate}[1.]
		\item $Q(A)$ has multiplicity-free tensor products.
		\item The following are equivalent:
		\begin{enumerate}[i.]
			\item $Q(A)$ is doubly real,
			\item $Q(A)$ is real,
			\item $z\in A^2$.
		\end{enumerate}
	\end{enumerate}
\end{theorem}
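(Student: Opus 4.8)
Write $G$ for $Q(A)$. The proof uses the concrete structure of $G$: writing $A$ also for its image in $G$ and $s$ for the image of the generator of $C_4$, the group $G$ is generated by $A$ and an element $s$ of order $4$ with $s^2=z$ and $sas^{-1}=a^{-1}$ for all $a\in A$, so that $A$ is an abelian normal subgroup of index $2$ in $G$ and $s\notin A$. I will first establish assertion (1); then (i)$\Rightarrow$(ii) is immediate and (ii)$\Rightarrow$(i) follows from (1) and Theorem~\ref{theorem:real-multiplcity-free}; finally I prove (ii)$\Leftrightarrow$(iii) by a direct conjugacy computation.

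For (1) I would prove the general fact that any finite group with an abelian subgroup of index $2$ has multiplicity-free tensor products, and apply it to $A\le G$. Clifford theory for the cyclic quotient $G/A$ identifies $\hat G$ as consisting of the one-dimensional representations (two of them extend each $s$-invariant character of $A$) and the two-dimensional irreducibles $\Ind_A^G\chi$ with $\chi^s\neq\chi$, where $\Ind_A^G\chi\cong\Ind_A^G\chi'$ precisely when $\chi'\in\{\chi,\chi^s\}$. A one-dimensional representation tensored with any irreducible is irreducible, so it suffices to treat $U=\Ind_A^G\chi$ and $V=\Ind_A^G\psi$. The projection formula $\Ind_A^G\chi\otimes V\cong\Ind_A^G(\chi\otimes\Res_A V)$ and $\Res_A\Ind_A^G\psi\cong\psi\oplus\psi^s$ give
\[
 U\otimes V\;\cong\;\Ind_A^G(\chi\psi)\;\oplus\;\Ind_A^G(\chi\psi^s).
\]
Each summand $\Ind_A^G\eta$ is multiplicity-free --- it is the irreducible two-dimensional $\Ind_A^G\eta$ when $\eta^s\neq\eta$, and the sum of the two distinct extensions of $\eta$ when $\eta^s=\eta$ --- and the two summands share no irreducible constituent: if both are two-dimensional, a shared constituent would force $\chi\psi^s\in\{\chi\psi,\chi^s\psi^s\}$, hence $\psi^s=\psi$ or $\chi^s=\chi$; if both split into one-dimensionals, a shared constituent would force $\chi\psi=\chi\psi^s$ on $A$; and a one-dimensional and a two-dimensional irreducible are never isomorphic. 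Since $\chi^s\neq\chi$ and $\psi^s\neq\psi$, each possibility is excluded, so $U\otimes V$ is multiplicity-free.

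For (2): if $G$ is doubly real then the simultaneous conjugacy class of $(g,1)$ contains $(g^{-1},1)$ for every $g$, so $G$ is real, giving (i)$\Rightarrow$(ii); and (ii)$\Rightarrow$(i) is Theorem~\ref{theorem:real-multiplcity-free} together with (1). For (ii)$\Leftrightarrow$(iii), note that $sas^{-1}=a^{-1}$ makes every element of $A$ conjugate to its inverse, so $G$ is real if and only if every element of the coset $As$ is conjugate to its inverse. One computes $(as)^{-1}=azs$ and, for $b\in A$, $b(as)b^{-1}=(ab^2)s$ and $(bs)(as)(bs)^{-1}=(a^{-1}b^2)s$; since $a^{-1}\in aA^2$ the conjugacy class of $as$ is exactly $(aA^2)s$. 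Therefore $as$ is conjugate to $(as)^{-1}=azs$ if and only if $az\in aA^2$, i.e.\ if and only if $z\in A^2$, and this condition does not depend on $a$. Hence $G$ is real if and only if $z\in A^2$, which is (iii).

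The point requiring care is the constituent bookkeeping in (1): one must track the cases where $\chi\psi$ or $\chi\psi^s$ has order at most $2$, so that the corresponding induced representation is a sum of two one-dimensionals, and confirm that these pieces are not shared between the two summands. This is routine given the Clifford-theoretic description of $\hat G$; alternatively one can state (1) as the commutativity of $\kron_2(Q(A))$ via Theorem~\ref{theorem:commutativity-kronecker-hecke} and read $\kappa(V_1,V_2,V_3)\le1$ off the same tensor-product decomposition.
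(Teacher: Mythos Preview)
Your proof is correct, and for part~(2) it follows essentially the same route as the paper: the direct conjugacy computation showing $G$ is real if and only if $z\in A^2$, combined with Theorem~\ref{theorem:real-multiplcity-free} to pass between real and doubly real.

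For part~(1), however, you take a genuinely different approach. You prove the general statement that \emph{any} finite group with an abelian subgroup of index~$2$ has multiplicity-free tensor products, via Clifford theory and the projection formula, and then apply it to $A\le Q(A)$. The paper instead argues specifically for $Q(A)$ using the counting machinery of Section~\ref{sec:kronecker-hecke}: it computes $|\conj_2(Q(A))|$ by Burnside's lemma, identifies this with $\sum_{(U,V,W)}\kappa(U,V,W)^2$ via Theorem~\ref{theorem:sum-of-squares}, strips off the contributions from dimension vectors involving a~$1$, and is left with $\sum_{\delta=(2,2,2)}\kappa^2=2(n-1)(n-2)$; a separate dimension count shows $\sum_{\delta=(2,2,2)}\kappa=2(n-1)(n-2)$ as well, forcing each such $\kappa$ to be $0$ or~$1$. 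Your argument is shorter, more elementary, and yields a stronger conclusion (it covers, for instance, all dihedral and generalized dihedral groups at once); the paper's argument, on the other hand, serves as a worked illustration of the Kronecker--Hecke identity~\eqref{eq:A} in action. Your case analysis of the shared-constituent question is complete; the observation that $a^{-1}\in aA^2$ (since $a^{-2}=(a^{-1})^2$) neatly consolidates the two conjugacy computations in part~(2).
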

\begin{proof}
	The group $A$ has even order, say $2n$, and is a normal subgroup of $Q(A)$ with index $2$.
	The elements $1$ and $z=s^2$ form the center of $Q(A)$.
	The remaining $2n-2$ elements of $A$ lie in conjugacy classes of the form $\{a,a^{-1}\}$ and have centralizers of order $2n$.
	The $2n$ elements of $Q(A)-A$ fall into two conjugacy classes, each of size $n$ and have centralizers of order $4$.
	In particular, $Q(A)$ has two central classes, $n-1$ classes of size $2$, and $2$ classes of size $n$; a total of $n+3$ conjugacy classes.

	By Burnside's lemma, for any finite group $G$ and any positive integer $d$,
	\begin{equation}\label{eq:burnside}
		|\conj_d(G)| = \frac 1{|G|}\sum_{g\in G} |C_G(g)|^d.
	\end{equation}
	Accordingly,
	\begin{displaymath}
		\frac 1{4n}[2\times (4n)^2 + (2n-2)\times(2n)^2 + 2n\times 4^2] = 2n^2 + 6n + 8.
	\end{displaymath}

	Since $Q(A)$ has an Abelian subgroup of index $2$, its irreducible representations have dimensions $1$ and $2$.
	Suppose $\alpha$ is the number of one dimensional representations, and $\beta$ is the number of two-dimensional representations, then $\alpha+\beta=n+3$ and $\alpha+4\beta=4n$.
	Hence, $\alpha=4$ and $\beta=n-1$.

	Given $(U,V,W)\in\widehat{Q(A)}$, let $\delta=(\dim U, \dim V, \dim W)$.
	Let's calculate the left hand side of the identity in Theorem~\ref{theorem:sum-of-squares}.
	If $\delta=(1,1,1)$, then $W$ is completely determined by $U$ and $V$, and so there are $16$ triples with $\kappa(U,V,W)=1$.
	If $\delta=(2,1,2)$, then $U\otimes V$ is irreducible of dimension $2$, so again $W$ is completely determined by $U$ and $V$, and so there are $4(n-1)$ triples with $\kappa(U,V,W)=1$.
	By symmetry of Kronecker coefficients, each of $\delta=(1,2,2)$ and $(2,2,1)$ also contribute $4(n-1)$ to the sum.
	If $\delta=(2,1,1)$ then $\kappa(U,V,W)=0$, because $U\otimes V$ is irreducible of dimension $2$.
	By symmetry, $\kappa(U,V,W)=0$ for $\delta=(1,2,1)$ and $(1,1,2)$ as well.
	By Theorem~\ref{theorem:sum-of-squares}, we have:
	\begin{displaymath}
		2n^2 + 6n + 8 = 16 + 12(n-1) + \sum_{\delta = (2,2,2)} \kappa(U,V,W)^2,
	\end{displaymath}
	whence
	\begin{equation}\label{eq:222}
		\sum_{\delta=(2,2,2)}\kappa(U,V,W)^2 = 2(n-1)(n-2).
	\end{equation}
	The representation $X:= \bigoplus_{\dim U = \dim V =2} U\otimes V$ has dimension $4(n-1)^2$.
	We have
	\begin{displaymath}
		\dim X = \sum_{\dim U=\dim V=2} \kappa(U,V,W)\dim W.
	\end{displaymath}
	Since $\kappa(U,V,W)=1$ for $d=(2,2,1)$, one-dimensional representations of $Q(A)$ span a $4(n-1)$ dimensional subspace of $X$.
	Thus, two-dimensional representations of $Q(A)$ must span a subspace of dimension $4(n-1)^2 - 4(n-1) = 4(n-1)(n-2)$ in $X$.
	It follows that the sum of multiplicities of two-dimensional representations in $X$ is $2(n-1)(n-2)$.
	In other words
	\begin{displaymath}
		\sum_{\delta=(2,2,2)}\kappa(U,V,W) = 2(n-1)(n-2).
	\end{displaymath}
	Comparing with Equation~\eqref{eq:222}, we see that $\kappa(U,V,W)=1$ whenever $\delta=(2,2,2)$.
	It follows that $Q(A)$ has multiplicity-free tensor products.

	Since $sas^{-1}=a^{-1}$ every element of $a$ is conjugate to its inverse in $Q(A)$.
	For the remaining elements, observe that, for any $a\in A-\{1,z\}$, $(sa)^{-1}=sza$.
	Now $b(sa)b^{-1} = sab^{-2}$ and $(sb)sa(sb)^{-1} = sa^{-1}b^2$.
	Thus $sa$ is conjugate to its inverse (and hence $Q(A)$ is real) if and only if $z\in A^2$.
	By Theorem~\ref{theorem:real-multiplcity-free}, $Q(A)$ is doubly real if and only if it is real.
	\end{proof}
\section{Groups that do not have Multiplicity-Free Tensor Products}\label{sec:groups-that-dont}
\subsection{Symmetric and Alternating Groups}\label{section:alternating-groups}
When $G$ is the symmetric group $S_n$, irreducible representations $U$ and $V$ for which $U\otimes V$ has multiplicity-free tensor products were characterized by Bessenrodt and Bowman~\cite{MR3720803}.
For alternating groups, such pairs are classified by Homma~\cite{homma}.
The following theorem is easily deduced from their results.
For completeness, we outline a proof.
\begin{theorem}\label{theorem:symmetric_and_alternating}
	Then the symmetric group $S_n$ has multiplicity-free tensor products if and only if $n<5$.
	The alternating group $A_n$ has multiplicity-free tensor products if and only if $n< 4$.
\end{theorem}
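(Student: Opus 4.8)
The plan is to brute-force the forward implication and to establish the converse by exhibiting explicit multiplicities, appealing to the classifications of Bessenrodt--Bowman and Homma only where the bookkeeping gets heavy. The ``if'' direction is almost vacuous: $S_1,S_2$ and $A_1,A_2,A_3$ are Abelian, while $S_3$ (irreducible dimensions $1,1,2$) and $S_4$ (dimensions $1,1,2,3,3$) are small enough that one reads directly off the character table that every tensor product of two irreducibles decomposes without repetition; only the products involving the two- or three-dimensional representations need to be examined. So the substance is the ``only if'' direction.

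For the symmetric groups, the case $n=5$ is settled by the single character computation
\[
\chi_{(3,1,1)}\otimes\chi_{(3,1,1)}=\chi_{(5)}\oplus\chi_{(1^5)}\oplus\chi_{(4,1)}\oplus\chi_{(2,1,1,1)}\oplus 2\chi_{(3,2)}\oplus 2\chi_{(2,2,1)}\oplus\chi_{(3,1,1)},
\]
which exhibits a constituent of multiplicity $2$. For $n\ge 6$ I would use the standard representation: in the representation ring one has $\chi_{(n-1,1)}=\Ind_{S_{n-1}}^{S_n}\mathbf 1-\mathbf 1$, so the projection formula gives $\chi_{(n-1,1)}\otimes\chi_\lambda=\Ind_{S_{n-1}}^{S_n}\Res^{S_n}_{S_{n-1}}\chi_\lambda-\chi_\lambda$; the branching rule for $S_{n-1}\subset S_n$ then shows that the multiplicity of $\chi_\lambda$ itself in $\chi_{(n-1,1)}\otimes\chi_\lambda$ equals $c(\lambda)-1$, where $c(\lambda)$ is the number of removable boxes of $\lambda$ (equivalently, its number of distinct part sizes). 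Taking $\lambda=(n-3,2,1)$, for which $c(\lambda)=3$ as soon as $n\ge 6$, yields $\kappa(\chi_{(n-1,1)},\chi_\lambda,\chi_\lambda)\ge 2$, so $S_n$ does not have multiplicity-free tensor products.

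For the alternating groups, $n=4$ is immediate: the three-dimensional irreducible $V$ of $A_4$ satisfies $\wedge^2 V\cong V$ and $\Sym^2 V\cong\mathbf 1\oplus\omega\oplus\bar\omega\oplus V$ (where $\omega,\bar\omega$ are the nontrivial one-dimensional representations), so $V\otimes V\cong\mathbf 1\oplus\omega\oplus\bar\omega\oplus 2V$. Restricting the displayed $S_5$-identity to $A_5$ gives $Y\otimes Y\supseteq 2W$, where $Y$ and $W$ are the five- and four-dimensional irreducibles of $A_5$, so $A_5$ fails as well. For $n\ge 7$ the partition $\lambda=(n-3,2,1)$ is not equal to its transpose, so $\chi_{(n-1,1)}$ and $\chi_\lambda$ each restrict irreducibly to $A_n$, and standard Clifford theory for the normal subgroup $A_n\subset S_n$ transfers the multiplicity, so $A_n$ is again not multiplicity-free. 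The single leftover case $A_6$ I would dispatch by a direct character computation, or simply invoke Homma's classification~\cite{homma}, which in any event treats the entire alternating tail uniformly.

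The only genuine friction I anticipate is $A_6$: there every partition of $6$ with three removable boxes is self-conjugate, so the clean ``corner count'' of the symmetric case splits into two equidimensional $A_6$-constituents and no longer transfers automatically --- this is exactly the point where one must either run a bespoke computation or defer to the literature. Everything else reduces to two finite character-table checks (the ``if'' direction) and the single branching identity for $\chi_{(n-1,1)}\otimes\chi_\lambda$.
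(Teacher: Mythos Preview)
Your overall strategy is sound and differs from the paper's in an instructive way. The paper simply asserts that $\kappa(V_{(n-2,2)},V_{(n-2,1,1)},V_{(n-2,1,1)})=2$ for every $n\ge5$, and then observes that both $(n-2,2)$ and $(n-2,1,1)$ are non-self-conjugate once $n>5$, so the very same triple witnesses the failure for $A_n$; only $A_4$, $A_5$, and the small symmetric groups are left to direct check. Your branching computation for $\chi_{(n-1,1)}\otimes\chi_\lambda$ is more self-contained --- it actually \emph{proves} the multiplicity rather than importing it --- but the price is that your witness $\lambda=(n-3,2,1)$ is self-conjugate exactly at $n=6$, which is why you are forced into a separate $A_6$ argument that the paper's choice of partitions sidesteps.

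There is, however, a genuine slip in your $A_5$ step. The displayed $S_5$-identity decomposes $\chi_{(3,1,1)}^{\otimes2}$, and $(3,1,1)$ is self-conjugate, so $\chi_{(3,1,1)}|_{A_5}$ splits as the sum of the two three-dimensional $A_5$-irreducibles. Restricting the identity therefore does \emph{not} produce the tensor square of an irreducible $A_5$-module, and in particular does not give the claimed inclusion $Y\otimes Y\supseteq 2W$ with $Y$ five-dimensional. The conclusion you want is still true --- with $W_5=\chi_{(3,2)}|_{A_5}$ and $W_4=\chi_{(4,1)}|_{A_5}$ one has $W_5\otimes W_5\supseteq 2W_4$ --- but the clean route is to decompose $\chi_{(3,2)}\otimes\chi_{(3,2)}$ in $S_5$ (multiplicity-free there, containing each of $\chi_{(4,1)}$ and $\chi_{(2,1^3)}$ once) and then restrict, using that $\chi_{(4,1)}$ and $\chi_{(2,1^3)}$ have the same restriction $W_4$. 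Alternatively, simply absorb $A_5$ into the ``direct computation'' bucket alongside $A_4$ and $A_6$.
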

\begin{proof}
	For each integer partition $\lambda$ of $n$, let $V_\lambda$ denote the irreducible representation of $S_n$ corresponding to $\lambda$.
	For $n\geq 5$, it turns out that $\kappa(V_{(n,2)}, V_{(n,1,1)}, V_{(n,1,1)})=2$.
	Hence, $S_n$ does not have multiplicity-free tensors for $n\geq 5$.
	The restrictions of these representations of $A_n$ remain irreducible for $n>5$, so $A_n$ does not have multiplicity-free tensor products for $n>5$.
	The remaining finitely many cases are easily checked by direct computation.
\end{proof}
\begin{remark}\label{remark:subgroup}
	The group $S_4$ has multiplicity-free tensor products, but its subgroup  $A_4$ does not.
	Thus, the property of having multiplicity-free tensor products is not inherited by subgroups.
\end{remark}
\subsection{A Combinatorial Lemma}
\begin{lemma}\label{lemma:combinatorial}
	Let $z,a,q$ be positive integers such that $z<a$ and $q>2$.
	Let $G$ be a finite group of order $aq$ such that, for some $z\leq a$,
	\begin{enumerate}[1.]
		\item $G$ has $z$ elements in its center, $a-z$ elements with centralizer of size $a$, and $a(q-1)$ elements with centralizer of size $zq$.
		\item $G$ has $zq$ representations of dimension $1$ and $(a-z)/q$ irreducible representations of dimension $q$.
	\end{enumerate}
	Then $G$ does not have multiplicity-free tensor products.
\end{lemma}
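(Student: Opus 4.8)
The plan is to argue by contradiction: assume $G$ has multiplicity-free tensor products, so by definition $\kappa(U,V,W)\le 1$ for all irreducible $U,V,W$, and produce a numerical contradiction with $q>2$. The first step is pure bookkeeping with the hypotheses. The dimension identity $zq\cdot 1 + \tfrac{a-z}{q}\cdot q^2 = aq = |G|$ shows that the $zq$ linear characters and the $\tfrac{a-z}{q}$ irreducibles of dimension $q$ exhaust $\widehat G$; in particular $q\mid a-z$, and I set $\alpha=zq$, $\beta=\tfrac{a-z}{q}$. Condition~1 likewise accounts for all $aq$ elements of $G$, so Burnside's formula~\eqref{eq:burnside} with $d=2$ gives $|\conj_2(G)| = \tfrac1{aq}\bigl(z(aq)^2 + (a-z)a^2 + a(q-1)(zq)^2\bigr) = zaq + \tfrac{(a-z)a}{q} + (q-1)z^2q$.

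Next I evaluate the left side of Theorem~\ref{theorem:sum-of-squares} (for $d=2$) by partitioning the triples $(V_1,V_2,V_3)\in\widehat G^3$ according to their dimension pattern $\delta\in\{1,q\}^3$. Three types of pattern are handled directly from elementary facts: a product of linear characters is linear, so $\delta=(1,1,1)$ contributes $\alpha^2=z^2q^2$; a $q$-dimensional irreducible twisted by a linear character stays irreducible of dimension $q$, so $\delta=(q,1,1)$ and its two cyclic shifts contribute $0$; and for $q$-dimensional $U,V$ one has $\langle U\otimes V,\chi\rangle=\langle V,U'\otimes\chi\rangle\in\{0,1\}$, equal to $1$ exactly when $V\cong U'\otimes\chi$, so $\delta=(q,q,1)$ and its two shifts each contribute $\beta\alpha=z(a-z)$. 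Subtracting these from $|\conj_2(G)|$ isolates $\Sigma_2:=\sum_{\delta=(q,q,q)}\kappa(U,V,W)^2$, which simplifies to $(a-z)\bigl(z(q-3)+\tfrac aq\bigr)$.

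I then extract a second identity from dimensions alone: $\sum_{\dim U=\dim V=q}\dim(U\otimes V)=\beta^2q^2=(a-z)^2$, whereas decomposing each $U\otimes V$ and separating its linear part (whose total dimension is exactly the $z(a-z)$ computed above) from its $q$-dimensional part shows the same sum equals $z(a-z)+q\,\Sigma_1$, where $\Sigma_1:=\sum_{\delta=(q,q,q)}\kappa(U,V,W)$; hence $\Sigma_1=\tfrac{(a-z)(a-2z)}{q}$. Now the multiplicity-free hypothesis forces $\kappa\in\{0,1\}$ on every $(q,q,q)$-triple, so $\Sigma_2=\Sigma_1$. Equating the two closed forms, cancelling the nonzero factor $a-z$ (using $z<a$), and clearing denominators yields $z(q^2-3q+2)=0$, hence $(q-1)(q-2)=0$ since $z$ is a positive integer — contradicting $q>2$.

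The one genuinely non-obvious move, which I expect to be the crux, is the third step: the hypotheses say nothing about how products of the $q$-dimensional irreducibles decompose, so rather than attempting to describe those decompositions one must obtain \emph{both} $\sum\kappa^2$ and $\sum\kappa$ over the $(q,q,q)$-block from two global identities — Theorem~\ref{theorem:sum-of-squares} and the total-dimension identity — and then play the inequality $\kappa\le 1\Rightarrow\kappa^2=\kappa$ against them. Everything else is routine arithmetic with the supplied class and representation data.
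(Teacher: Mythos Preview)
Your proof is correct and follows essentially the same route as the paper's: compute $|\conj_2(G)|$ via Burnside's lemma, partition the sum in Theorem~\ref{theorem:sum-of-squares} by dimension pattern, and isolate the $(q,q,q)$ block. The only variation is in the endgame: where the paper bounds the number of nonzero $(q,q,q)$ terms above by $(a-z)^2/q$ (at most $q$ choices of $W$ for each pair $U,V$) to obtain the inequality $q^2-3q+1\le 0$, you compute $\Sigma_1$ exactly from the total-dimension identity and obtain the sharper equation $(q-1)(q-2)=0$; both give the desired contradiction for $q>2$.
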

\begin{proof}
Burnside's lemma~\eqref{eq:burnside} tells us that
	\begin{align*}
		|\conj_2(G)| &= \frac 1{aq}[z\times a^2q^2 + (a-z)\times a^2 + a(q-1)\times z^2q^2]\\
		& = zaq + \frac{a(a-z)}q + z^2q(q-1).
	\end{align*}
	By Theorem~\ref{theorem:sum-of-squares}, $|\conj_2(G)| = \sum_{U,V,W\in \hat G^3}\kappa(U,V,W)^2$.
	We break this sum into parts depending on the dimension vector $\delta=(\dim U,\dim V,\dim W)$.
	If $\delta=(1,1,1)$, then $U$ and $V$ determine $W$ uniquely and $\kappa(U,V,W)=1$.
	There are $z^2q^2$ possibilities for $U$ and $V$.
	If $\delta=(q,1,q)$, then again, $U$ and $V$ completely determine $W$ and $\kappa(U,V,W)=1$.
	There are $z(a-z)$ possibilities for $U$ and $V$.
	Since $\kappa(U,V,W)$ is symmetric in $U$, $V$, and $W$, the number of possibilities for $\delta=(1,q,q)$ and $(q,q,1)$ is also $z(a-z)$.
	When $\delta=(1,1,q)$, $\kappa(U,V,W)=0$, since $U\otimes V$ is one-dimensional.
	By symmetry, $\kappa(U,V,W)=0$ when $\delta=(q,1,1)$ or $(1,q,1)$.
	This leaves us with the contribution of terms for which $\delta=(q,q,q)$, which we denote by $X$.
	The identity of Theorem~\ref{theorem:sum-of-squares} becomes
	\begin{displaymath}
		zaq + \frac{a(a-z)}q + z^2q(q-1) = z^2q^2 + 3z(a-z) + X,
	\end{displaymath}
	whence we get
	\begin{equation}\label{eq:X}
		X = zaq + \frac{a(a-z)}q + z^2q(q-1) - z^2q^2 - 3z(a-z).
	\end{equation}
	Now suppose that $G$ has multiplicity-free tensor products.
	Then each term with $\delta=(q,q,q)$ contributes at most $1$ to the sum of squares.
	There are $(a-z)/q$ possibilities for each of $U$ and $V$.
	Since $U\otimes V$ has dimension $q^2$, there are at most $q$ possibilities for $W$ given $U$ and $V$.
	We get
	\begin{displaymath}
		X \leq (a-z)^2/q.
	\end{displaymath}
	Comparing with Equation~\eqref{eq:X}, we get
	\begin{displaymath}
		zaq + \frac{a(a-z)}q + z^2q(q-1) - z^2q^2 - 3z(a-z) \leq (a-z)^2/q,
	\end{displaymath}
	which simplifies to
	\begin{displaymath}
		\frac{z(a-z)}q[(q-2)(q-1)-1] \leq 0,
	\end{displaymath}
	which implies that the prime number $q$ is equal to $2$.
	Thus, for every odd prime $q$, the non-Abelian semidirect product $G=A\rtimes C_q$ does not have multiplicity-free tensor products.
\end{proof}
\subsection{Nilpotent Groups}\label{sec:nilpotent}
\begin{theorem}\label{theorem:nilpotent}
	Let $G$ be a finite nilpotent group.
	If there exists an odd prime $p$ such that the $p$-primary part of $G$ is non-Abelian, then $G$ does not have multiplicity-free tensor products.
\end{theorem}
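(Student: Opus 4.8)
The plan is to reduce, in two steps, to a very rigid class of $p$-groups and then to produce a tensor square with large multiplicity. The key soft fact is that having multiplicity-free tensor products is inherited by quotients: if $N\trianglelefteq H$, the irreducible representations of $H/N$ are exactly those of $H$ that are trivial on $N$, a tensor product of two such is still trivial on $N$, so the multiplicity of $W$ in $U\otimes V$ is the same whether computed in $H$ or in $H/N$. Since $G$ is nilpotent it is the direct product of its Sylow subgroups, so the $p$-primary part $G_p$ is a quotient of $G$, and by hypothesis $G_p$ is a non-Abelian $p$-group with $p$ odd. Hence it suffices to prove that a non-Abelian $p$-group with $p$ odd does not have multiplicity-free tensor products.

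Next I would pass to a quotient $Q$ of this $p$-group that is non-Abelian but all of whose proper quotients are Abelian (choose a normal subgroup maximal among those with non-Abelian quotient). Then every nontrivial normal subgroup of $Q$ contains $Q'$, since the quotient by it is Abelian. Applying this to the $Q$-normal subgroups of $Q'$ shows $Q'$ has no proper nontrivial one; since $Q'\cap Z(Q)\neq 1$ is $Q$-normal, we get $Q'\le Z(Q)$ and $|Q'|=p$, and $Q'$ is the unique minimal normal subgroup of $Q$. In particular every subgroup of $Z(Q)$ of order $p$ equals $Q'$, so $Z(Q)$ is cyclic. As $Q$ has nilpotency class $2$, the commutator map induces a non-degenerate alternating $\mathbf F_p$-bilinear form on $Q/Z(Q)$ with values in $Q'\cong\mathbf F_p$; non-degeneracy forces $Q/Z(Q)$ to be elementary Abelian of even rank $2m$ with $m\ge 1$. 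Thus $Q$ is a \emph{generalized extraspecial} group, and a standard twisted-group-algebra (finite Stone--von Neumann) argument shows that for each faithful character $\mu$ of the cyclic group $Z(Q)$ there is a unique irreducible representation $\rho_\mu$ of $Q$ on which $Z(Q)$ acts by $\mu$, and that $\dim\rho_\mu=p^m$.

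Now I would finish as follows. Fix a faithful character $\mu$ of $Z(Q)$ and set $U=\rho_\mu$. Because $p$ is odd, squaring is an automorphism of the cyclic group $Z(Q)$, so $\mu^2$ is again faithful and $W:=\rho_{\mu^2}$ is the unique irreducible of $Q$ on which $Z(Q)$ acts by $\mu^2$. The center acts on $U\otimes U$ by the scalar $\mu^2$, hence acts by $\mu^2$ on every irreducible constituent of $U\otimes U$ as well, and by uniqueness every such constituent is isomorphic to $W$; comparing dimensions gives $U\otimes U\cong W^{\oplus p^m}$. Since $p^m\ge p\ge 3>1$, the group $Q$ does not have multiplicity-free tensor products, and therefore neither does the original $p$-group nor $G$.

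The hard part is the structural statement of the second paragraph: verifying cleanly that the minimal non-Abelian quotient $Q$ has cyclic center and elementary-Abelian symplectic central quotient, and that every faithful central character is afforded by a unique irreducible of dimension $p^m$. Everything else is formal, and the odd-prime hypothesis is used at exactly one point, to ensure that $\mu^2$ stays faithful; this is precisely where the argument must break for $p=2$, consistent with extraspecial $2$-groups being doubly real.
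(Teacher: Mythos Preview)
Your argument is correct and shares its endgame with the paper's proof: reduce to a class-$2$ $p$-group with $|G'|=p$, invoke the finite Stone--von Neumann theorem to obtain a unique irreducible $\rho_\mu$ of dimension $p^m$ for each central character $\mu$ that is faithful on $G'$, and then use that for odd $p$ the square $\mu^2$ is again faithful, so $\rho_\mu\otimes\rho_\mu\cong\rho_{\mu^2}^{\oplus p^m}$.

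The reductions to this situation are organised differently. The paper runs an induction on $|G|$: it treats the base case $|G|=p^3$ separately via its combinatorial Lemma~\ref{lemma:combinatorial}, and in the inductive step kills a central element of order $p$; the case where the resulting quotient is Abelian is exactly your class-$2$ situation, but with an arbitrary (not necessarily cyclic) center, which the paper handles by a pushout to a Heisenberg group over $U(1)$ before applying Stone--von Neumann. Your passage directly to a quotient $Q$ all of whose proper quotients are Abelian is more economical: it eliminates the induction and the separate base case, and it forces $Z(Q)$ to be cyclic, so ``faithful on $Z(Q)$'' and ``nontrivial on $Q'$'' coincide and the Stone--von Neumann hypothesis is immediate. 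The paper's route has the virtue of illustrating Lemma~\ref{lemma:combinatorial}; yours is the shorter path to the same conclusion.
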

\begin{proof}
	Every nilpotent group is a direct product of its $p$-primary parts.
	Therefore, it suffices to show that if $p$ is an odd prime, and $G$ is a non-Abelian $p$-group, then $G$ does not have multiplicity-free tensor products.

	We will prove this by induction on the order of $G$.
	The base case is when $G$ is a non-Abelian group of order $p^3$.
	In this case, there are two possibilities for $G$: either $G$ is a semidirect product $C_p^2\rtimes C_p$, where the generator of $C_p$ acts on $C_p^2$ by $(x,y)\mapsto (x,xy)$, or $G$ is a semidirect product $C_{p^2}\rtimes C_p$, where the generator of $C_p$ acts on $C_{p^2}$ by multiplication by $1+p$.
	In both these cases, Lemma~\ref{lemma:combinatorial} applies so $G$ does not have multiplicity-free tensor products.

	Assume that no non-Abelian $p$-group of order less than $p^k$ has multiplicity-free tensor products.
	Let $G$ be a non-Abelian $p$-group of order $p^k$.
	Let $a$ denote a central element of $G$ of order $p$.
	If $G/\gen{a}$ is non-Abelian, then it does not have multiplicity-free tensor products by the inductive hypothesis.
	Therefore, $G$ also does not have multiplicity-free tensor products.

	If $H=G/\gen{a}$ is Abelian, then the commutator map on $G$ gives rise to a well-defined map $B:H\times H\to \gen a$.
	This map is an alternating bihomomorphism, i.e., $B$ is a homomorphism in each argument when the other is fixed, and $B(x,x)=0$ for all $x\in H$.
	Let $\bar R = \{r\in H\mid B(r,x)=0 \text{ for all }x\in H\}$.
	Let $R$ denote the pre-image of $\bar R$ in $G$.
	Then $R$ is the centre of $G$.
	
	Let $\chi:R\to \CC^*$ denote an extension of a faithful character $\gen a\to \CC^*$.
	The group $\bar G$ formed as the pushout of the square on the left in the commutative diagram below is a Heisenberg group in the sense of~\cite{MR2307769}:
	\begin{displaymath}
		\begin{tikzcd}
			1 \arrow[r] & R \arrow[r] \arrow[dr, phantom, "\square"] \arrow[d, "\chi"] & G \arrow[r] \arrow[d] & H/\bar R \arrow[d, equal] \arrow[r] & 0\\
			1 \arrow[r] & U(1) \arrow[r] & \bar G \arrow[r]  & H/\bar R  \arrow[r] & 1
  		\end{tikzcd}
	\end{displaymath}
	Representations of $G$ with central character $\chi$ correspond to representations of $\bar G$ where $U(1)$ acts by the inclusion $U(1)\hookrightarrow \CC^*$.
	The Mackey-Stone-von Neumann theorem as given in~\cite[Theorem~1.2]{MR2307769} implies us that $\bar G$ has a unique irreducible representation of dimension $\sqrt{|H/\bar R|}$ where $U(1)$ acts by the identity character $U(1)\hookrightarrow \CC^*$.
	Thus, $G$ admits a unique irreducible representation $V_\chi$ of dimension $\sqrt{|H/\bar R|}$ with central character $\chi$.

	Since $p>2$, there exists a character $\psi:\gen a\to \CC^*$ such that $\psi$ and $\psi^2$ are faithful.
	Taking $\chi$ to be an extension of $\psi$, we see that $V_\chi\otimes V_\chi$ is an irreducible representation of $G$ of dimension $|H/\bar R|$ whose central character is $\chi^2$.
	Since there is a unique irreducible representation of $G$ with central character $\chi^2$, $V_\chi\otimes V_\chi = V_{\chi^2}^{\oplus \sqrt{|H/\bar R|}}$, hence is not multiplicity-free.
	Thus, $G$ does not have multiplicity-free tensor products.
\end{proof}
\subsection{Groups of Odd Order}\label{sec:odd-order}
We will show that no non-Abelian group of odd order has multiplicity-free tensor products.
For this we need a lemma about Frobenius groups.

Recall that a Frobenius group is a group of the form $G=P \rtimes H$, where $pHp^{-1}\cap H =\{\id\}$ for every $p\in P\setminus \{\id\}$.
In this case  we have (see~\cite[Lemma~7.3]{MR2270898}):
\begin{equation}\label{eq:frobenius-group}
	G\setminus P = \coprod_{x\in P} xHx^{-1}- \{id\},
\end{equation}
\begin{lemma}\label{lemma:frobenius}
	Let $G = P \rtimes H$ be a Frobenius group with the following properties:
	\begin{enumerate}[1.]
		\item $P$ is an elementary Abelian $p$-subgroup of order $p^b$,
		\item $P$ is the derived subgroup of $G$,
		\item $H$ is Abelian, and $H$ acts faithfully and irreducibly on $P$.
		\item $P$ is the unique minimal normal subgroup of $G$.
	\end{enumerate}
	Then $G$ does not have multiplicity-free tensor products unless $|H|=2$, in which case $G \cong D_{2p}$ is the dihedral group of order $2p$ and hence has multiplicity-free tensor products.
\end{lemma}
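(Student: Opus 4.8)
The plan is to reduce the statement to Lemma~\ref{lemma:combinatorial}. First I would observe that the hypotheses force $G$ to be non-trivial with $|H|\geq 2$: if $H=\{\id\}$ then $G=P$ is Abelian, contradicting hypothesis~2 (since $G'=\{\id\}\neq P$), while if $P=\{\id\}$ then faithfulness of the $H$-action forces $H=\{\id\}$ and $G=\{\id\}$. So one may assume $|H|\geq 2$ and split into the cases $|H|=2$ and $|H|\geq 3$.

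If $|H|=2$, a faithful irreducible $\mathbf F_p[C_2]$-module is necessarily one-dimensional: over $\mathbf F_2$ the algebra $\mathbf F_2[C_2]$ is local and its only simple module, the trivial one, is not faithful, so $p$ must be odd; and for $p$ odd, $\mathbf F_p[C_2]\cong\mathbf F_p\times\mathbf F_p$ has exactly two simple modules, the trivial and the sign character, of which only the sign character is faithful. Hence $b=1$, $P\cong C_p$, and $G\cong C_p\rtimes C_2\cong D_{2p}$, which, being generalized dihedral, is doubly real by Theorem~\ref{theorem:generalized-dihedral} and therefore has multiplicity-free tensor products.

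Now suppose $|H|\geq 3$ and put $q=|H|>2$, $a=|P|=p^b$, $z=1$; I would verify the two numerical hypotheses of Lemma~\ref{lemma:combinatorial}. For the conjugacy data: since $P$ is the unique minimal normal subgroup, a non-trivial centre would contain $P$ and force $H$ to act trivially on $P$, so $Z(G)=\{\id\}$; for $x\in P\setminus\{\id\}$, an element $ph\in C_G(x)$ (with $p\in P$, $h\in H$) satisfies $hxh^{-1}=x$ because $P$ is Abelian, hence $h$ fixes $x$ and so $h=\id$ by the Frobenius condition, giving $C_G(x)=P$ of order $a$ and accounting for the $a-1$ non-central elements of $P$; the remaining $|G|-p^b=p^b(|H|-1)=a(q-1)$ elements lie in $G\setminus P$, and by~\eqref{eq:frobenius-group} each is conjugate to some $h\in H\setminus\{\id\}$, for which a short computation using that $H$ is Abelian and the Frobenius condition gives $C_G(h)=H$ of order $q=zq$. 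For the representation data: $H$ acts freely on $\hat P\setminus\{\id\}$ because it does so on $P\setminus\{\id\}$ and each $h\in H$ has the same number of fixed points on $P$ as on $\hat P$ (the relevant matrices over $\mathbf F_p$, with $P\cong\mathbf F_p^b$, being transposes of one another's inverses); the little-group description of the irreducible representations of the semidirect product $P\rtimes H$, with $P$ Abelian and normal, then yields the $|H|$ linear characters inflated from $G/P\cong H$ coming from the $H$-fixed trivial character of $P$, together with one irreducible $\Ind_P^G\psi$ of dimension $[G:P]=|H|$ for each of the $(p^b-1)/|H|$ free $H$-orbits on $\hat P\setminus\{\id\}$, and these exhaust $\hat G$ since $|H|\cdot 1+\tfrac{p^b-1}{|H|}\cdot|H|^2=|G|$. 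Thus $G$ has $zq=|H|$ one-dimensional representations and $(a-z)/q=(p^b-1)/|H|$ irreducibles of dimension $q=|H|$, so, as $z=1<a$ and $q>2$, Lemma~\ref{lemma:combinatorial} shows that $G$ does not have multiplicity-free tensor products.

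The only point demanding any real care is the transfer of Frobenius freeness from $P$ to the dual group $\hat P$, since the orbit count (and with it hypothesis~2 of Lemma~\ref{lemma:combinatorial}) rests on it; everything else is a routine unwinding of the semidirect-product structure together with standard Clifford theory.
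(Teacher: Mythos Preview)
Your proof is correct and follows essentially the same strategy as the paper's: both verify the hypotheses of Lemma~\ref{lemma:combinatorial} with $z=1$, $a=p^b$, $q=|H|$ by computing centralizer sizes from the Frobenius condition and counting irreducibles via Clifford/little-group theory, and both treat the case $|H|=2$ separately to obtain $G\cong D_{2p}$. The only noticeable difference is cosmetic: where the paper computes the inertia group $I(\chi)$ of a nontrivial $\chi\in\hat P$ by finding an $h$-invariant complement to $\ker\chi$, you transfer freeness from $P$ to $\hat P$ directly via the transpose-inverse observation, which is a slightly slicker route to the same conclusion.
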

\begin{proof}
	We will prove the first part of the Lemma by showing that $G$ satisfies the hypotheses of Lemma~\ref{lemma:combinatorial} with $z=1$, $q=|H|$ and $a=p^b$.

	We first show that no non-trivial element of $h\in H$ commutes with any non-trivial element of $x\in P$.
	Indeed, if $hx = xh$, then $x$ lies in the centre $Z$ of the subgroup generated by $h$ and $P$.
	Since $Z$ is a non-trivial normal subgroup of $G$, it contains $P$.
	Therefore, $h$ commutes with all the elements of $P$.
	It follows that the subgroup generated by $h$ is normal in $G$, contradicting the uniqueness of $P$ among minimal normal subgroups of $G$.
	Thus, the centralizer of each non-trivial element of $P$ is $P$, and the centralizer of each non-trivial element of $H$ is $H$.
	Since any element of $G\setminus H$ is conjugate to an element of $H$, by~\eqref{eq:frobenius-group}, its centralizer is a conjugate of $H$.
	It follows that every element of $G\setminus P$ has centralizer of size $|H|$.
	Hence $G$ satisfies the first hypothesis of Lemma~\ref{lemma:combinatorial}.

	Let $\chi$ be a non-trivial linear character of $P$.
	Let $I(\chi)=\{g\in G\mid \chi^g=\chi\}$ be its inertia subgroup.
	Suppose $h\in I(\chi)\cap H$.
	Then the kernel $\ker\chi$ of $\chi$ is invariant under the action of $h$ on $P\cong\mathbf{F}_p^a$.
	Since $h$ is $p$-regular, it acts on $\mathbf F_p^\alpha$ by a semisimple matrix.
	Therefore, $\ker\chi$ has a one-dimensional $h$-invariant complement $D=\langle d\rangle$ in $P$.
	Suppose $hdh^{-1}=d^j$ for some $1\leq j<p$.
	Then $h\in I(\chi)$, $\chi(d)=\chi(hdh^{-1})=\chi(d^j)$.
	Since $\chi(d)$ is a non-trivial (hence primitive) $p$th root of unity, we must have $j=1$.
	Therefore $h$ commutes with a non-trivial element of $P$.
	As we have seen in the previous paragraph, we must have $h=\id$.
	Hence $I(\chi)=P$.

	Since the action of $H$ on $P$ is faithful, there are $(|P|-1)/|H|$ $H$-orbits of non-trivial linear characters of $P$.
	By Clifford theory, it follows that there are $(|P|-1)/|H|$ irreducible representations of dimension $|H|$.
	The remaining irreducible characters of $G$ are the $|H|$ one-dimensional characters extending the trivial character of $P$.
	Thus $G$ also satisfies the second hypothesis of Lemma~\ref{lemma:combinatorial}, and so it does not have multiplicity-free tensor products if $|H|>2$.

	Suppose that $|H|=2$ so $G = P \rtimes C_2$.
	Let $z$ denote the generator of $C_2$.
	Then $z$ acts on $P$ by an involution $\phi: P \to P$.
	Equivalently, $\phi$ is an $\mathbf F_p$-linear involution on $P$ of order $2$, so it has eigenvalues $\pm 1$.
	Since any $\phi$-eigenspace of $P$ is a normal subgroup of $G$, it must be trivial or all of $P$.
	Thus, $\phi = \pm\mathrm{Id}$.
	If $\phi = \mathrm{Id}$, then $G$ is Abelian, contradicting the hypothesis that $G$ is non-Abelian.
	Therefore, $\phi = -\mathrm{Id}$, so $G$ is a non-Abelian group of order $2p$.
	In this case, there is unique automorphism $\phi$ of $P$ of order $2$ given by $\phi(x) = x^{-1}$ for all $x\in P$, so $G\cong D_{2p}$.
\end{proof}
\begin{theorem}\label{theorem:odd-order}
	If $G$ is a non-Abelian group of odd order, $G$ does not have multiplicity-free tensor products.
\end{theorem}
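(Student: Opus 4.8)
The plan is to reduce the statement to two situations that have already been handled: non-abelian $p$-groups (Theorem~\ref{theorem:nilpotent}) and Frobenius groups of the shape covered by Lemma~\ref{lemma:frobenius}.

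I would begin by noting that having multiplicity-free tensor products passes to quotients: if $M\trianglelefteq G$ and $U,V$ are irreducible representations of $G/M$, then $M$ acts trivially on $U\otimes V$ (inflated to $G$), so every irreducible constituent of $U\otimes V$ is inflated from $G/M$, and hence a multiplicity in the $G/M$-decomposition is a multiplicity in the $G$-decomposition. Now suppose for contradiction that some non-abelian group of odd order has multiplicity-free tensor products. Among its normal subgroups choose one, say $K$, maximal subject to the quotient being non-abelian, and replace $G$ by $G/K$; this is still non-abelian of odd order with multiplicity-free tensor products, but now \emph{every} proper quotient of $G$ is abelian. Equivalently, $G/M$ is abelian for every non-trivial normal subgroup $M$, so $G'$ lies in every non-trivial normal subgroup; since $G$ is non-abelian, $N:=G'\neq 1$, and therefore $N$ is the unique minimal normal subgroup of $G$. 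As $G$ has odd order it is solvable (Feit--Thompson), so $N$ is an elementary abelian $p$-group for an odd prime $p$.

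If $G$ is a $p$-group, then it is a non-abelian $p$-group with $p$ odd and Theorem~\ref{theorem:nilpotent} already gives a contradiction, so assume $G$ is not a $p$-group. Being a minimal normal subgroup, $N$ is an irreducible $\mathbf{F}_p[G]$-module, and the abelian quotient $A:=G/C_G(N)$ (abelian since it is a quotient of $G/N$) acts on it faithfully; then a standard argument --- the image of $\mathbf{F}_p[A]$ in $\mathrm{End}_{\mathbf{F}_p}(N)$ is a commutative ring with $N$ as a faithful simple module, hence a field $\mathbf{F}_{p^b}$ with $p^b=|N|$, and $A$ embeds into $\mathbf{F}_{p^b}^\times$ --- shows $|A|$ divides $p^b-1$, in particular is prime to $p$. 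Hence $P:=C_G(N)$ has index prime to $p$ and contains the $p$-group $N$; since $Z(P)$ is a non-trivial normal subgroup of $G$ we get $N\subseteq Z(P)$, so $P'\subseteq G'=N\subseteq Z(P)$ and $P$ is nilpotent. Being normal and nilpotent, $P\subseteq F(G)$; and $F(G)$ is a $p$-group (its non-trivial Sylow subgroups are normal in $G$, hence contain $N$, which forces all primes other than $p$ out) contained in $C_G(N)=P$. Therefore $P=F(G)=O_p(G)$ is a normal Sylow $p$-subgroup, and Schur--Zassenhaus gives $G=P\rtimes H$ with $H\cong A$ abelian, acting faithfully and irreducibly on $N$, and with $|H|$ odd and $>1$.

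The crux is to prove $P=N$. The action of $H$ on $P$ is coprime, so $P=C_P(H)\,[P,H]$; but $[P,H]\subseteq[G,G]=N$, whence $P=C_P(H)\cdot N$. Since $N$ is central in $P$ and $C_P(H)\cap N=C_N(H)=0$ (the fixed space $C_N(H)$ is an $H$-submodule of the faithful irreducible module $N$, hence $0$ as $H\neq 1$), we obtain an internal direct product $P=C_P(H)\times N$. Now $C_P(H)$ is normal in $P$ (a direct factor) and centralized by $H$, hence normal in $G=PH$; if it were non-trivial it would contain the unique minimal normal subgroup $N$, contradicting $C_P(H)\cap N=0$. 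So $P=N$ and $G=N\rtimes H$. For $h\in H\setminus\{1\}$ the fixed space $C_N(h)$ is $H$-invariant (as $H$ is abelian) and, by faithfulness, a proper submodule of $N$, hence $0$; thus $G$ is a Frobenius group with kernel $N$ and complement $H$. The remaining hypotheses of Lemma~\ref{lemma:frobenius} hold: $N=P$ is elementary abelian; $G'=[N,H]$ (as $N$ and $H$ are abelian) and $[N,H]=N$ by irreducibility, so $N=G'$; $H$ is abelian and acts faithfully and irreducibly; and $N$ is the unique minimal normal subgroup. Since $|H|$ is odd it is not $2$, so Lemma~\ref{lemma:frobenius} says $G$ does not have multiplicity-free tensor products --- a contradiction, completing the proof.

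I expect the single genuine obstacle to be the equality $P=N$, i.e.\ excluding a normal $p$-subgroup strictly larger than $G'$; the coprime-action splitting $P=C_P(H)\times N$ combined with the uniqueness of the minimal normal subgroup is the point that makes everything collapse onto Lemma~\ref{lemma:frobenius}, the rest being routine applications of Feit--Thompson, Schur--Zassenhaus, properties of the Fitting subgroup, and coprime action.
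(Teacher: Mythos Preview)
Your argument is correct and shares the paper's overall skeleton: pass to a minimal non-abelian odd-order quotient so that $N:=G'$ is the unique minimal normal subgroup (an elementary abelian $p$-group), handle the $p$-group case via Theorem~\ref{theorem:nilpotent}, and otherwise show $G=N\rtimes H$ is a Frobenius group to which Lemma~\ref{lemma:frobenius} applies with $|H|>2$.

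The substantive difference is in how one shows that the normal Sylow $p$-subgroup coincides with $N$. The paper's Reduction~3 does this by an explicit Sylow computation: for a prime $q\neq p$ it counts Sylow $q$-subgroups of $PQ$ and $FQ$, produces a complement $N_{FQ}(Q)\cap F$ to $P$ inside a Sylow $p$-subgroup $F$, and argues this complement is central in $FQ$, hence normal in $G$, hence trivial; the complement $H$ is then obtained via Hall's theorem. You instead identify $C_G(N)=F(G)=O_p(G)$ using Fitting-subgroup properties, split off $H$ by Schur--Zassenhaus, and use the coprime-action decomposition $P=C_P(H)[P,H]$ together with $[P,H]\subseteq G'=N$ and $N\subseteq Z(P)$ to obtain $P=C_P(H)\times N$, whence $C_P(H)\trianglelefteq G$ is trivial by uniqueness of $N$. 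Your route is shorter and leans on standard structural tools (Fitting subgroup, coprime action); the paper's is more hands-on and avoids the coprime-action lemma at the cost of a longer bespoke calculation. One small point worth a line of justification in your write-up is $F(G)\subseteq C_G(N)$: since $N\cap Z(F(G))$ is a non-trivial normal subgroup of $G$ contained in $N$, it equals $N$, so $N\subseteq Z(F(G))$.
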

\begin{proof}
	Let $G$ be a non-Abelian group of odd order.
	We will show that $G$ does not have multiplicity-free tensor products using a series of reductions.
	By the Feit-Thompson theorem~\cite{MR166261}, $G$ is solvable.
	\begin{reduction}\label{reduction:Abelian-quotients}
		For any non-trivial normal subgroup $N$ of $G$, $G/N$ is Abelian.
	\end{reduction}
	\begin{proof}
		If not, replace $G$ by $G/N$ and proceed as before.
	\end{proof}
	\begin{reduction}\label{reduction:normal_quotient_abelian}
		For some prime $p$, $G$ has an elementary Abelian normal $p$-subgroup $P$ that is minimal among all non-trivial normal subgroups.
		The commutator subgroup $G'=P$, and $P$ is the unique minimal normal subgroup of $G$.
	\end{reduction}
	\begin{proof}
	Let $P$ be a non-trivial minimal normal subgroup of $G$.
	Then $P$ must be Abelian since its derived group, being a distinguished proper subgroup of $P$, is normal in $G$, hence trivial.
	Let $p$ be a prime number dividing the order of $P$.
	The subgroup $\{a\in P\mid a^p = \mathrm{id}\}$ is a distinguished non-trivial elementary Abelian subgroup of $P$ hence normal in $G$, hence must be equal to $P$.
	Therefore $P$ is an elementary Abelian $p$-group.

	By Reduction~\ref{reduction:Abelian-quotients}, $G/P$ is Abelian so $\{\mathrm{id}\}\lneq G'\leq P$.
	Since $G'$ is normal, and $P$ is minimal normal, we must have $G'=P$.
\end{proof}
\begin{reduction}\label{reduction:not-p}
	$|G/P|$ is coprime to $p$.
\end{reduction}
\begin{proof}
	If $G$ is a $p$-group, then $G$ cannot have multiplcity-free tensor products by Theorem~\ref{theorem:nilpotent}.
	Therefore, we may assume that $|G|$ is divisible by at least one more prime besides $p$, say $q$.
	Let $Q$ denote a Sylow $q$-subgroup of $G$.
	Since $G/P$ is Abelian, $PQ$ is a normal subgroup of $G$.

	Let the order of $Q$ be $q^\beta$ and the order of $P$ be $p^\alpha$.
	We claim that the number of Sylow $q$-subgroups of $PQ$ is $p^\alpha$.
	To prove this claim, we need to show that for any non-identity element $p\in P$, $pQp^{-1}\neq Q$.
	Let $K=\{x\in P\mid xQx^{-1}=Q\}$.
	For $x\in K$ and $y\in Q$, $xyx^{-1}y^{-1}\in P\cap Q = \{\id\}$.
	Therefore $K$ is contained in the centre of $PQ$.
	But the centre of $PQ$, being a characteristic subgroup of $PQ$, is normal in $G$.
	If $K$ is non-trivial, then by minimality of $P$, the centre of $PQ$ contains $P$, so $PQ\cong P\times Q$. 
	Thus $Q$, being a characteristic subgroup of $PQ$ is normal in $G$, contradicting the uniqueness of $P$ among minimal normal subgroups of $G$ (Reduction~\ref{reduction:normal_quotient_abelian}).
	Hence $K$ is trivial, so the number conjugates of $Q$ in $PQ$ is $p^\alpha$.

	Let $F$ be a Sylow $p$-subgroup of $G$.
	Then $FQ$ is a normal subgroup of $G$, and by the previous paragraph, the number of Sylow $q$-subgroups of $FQ$ is $p^\alpha$.
	Therefore, $N=N_{FQ}(Q)$ has index $p^\alpha$ in $FQ$, so that
	\begin{equation}
		|N| = |FQ|/p^\alpha = |F|q^{\beta}/p^\alpha.
		\label{eq:n}
	\end{equation}
	We claim that
	\begin{equation}
		[F:N\cap F] = p^\alpha.
		\label{eq:normal-complement}
	\end{equation}
	Since $FQ\supset N\supset Q$, $FQ=FN$.
	We have
	\begin{equation}
		Q \cong FQ/F = FN/F \cong N/(N\cap F).
		\label{eq:normal-complement-2}
	\end{equation}
	Hence, using~\eqref{eq:n},
	\begin{equation}
		|N\cap F| = |N|/q^\beta = |F|/p^\alpha,
	\end{equation}
	from which~\eqref{eq:normal-complement} follows.
	In other words, $N\cap F$ provides a complement of $P$ in $F$.

	But, the centre of $F$ contains $P$, so $F\cong P\times (N\cap F)$.
	Therefore $N\cap F$ lies in the centre of $FQ$.
	
	The centre of $FQ$ is a normal subgroup of $G$.
	If $N\cap F$ is non-trivial, then the centre of $FQ$ is a non-trivial normal subgroup of $G$, therefore it contains $P$.
	In particular, $xQx^{-1}=Q$ for all $x\in P$, a contradiction.
	Hence $N\cap F$ is trivial, so $P=F$, so that $|G/P|$ is coprime to $p$.
\end{proof}
\begin{reduction}\label{reduction:semidirect}
	$G\cong P\rtimes H$, where $P$ is an elementary Abelian $p$-group for some prime $p$, $H$ is an Abelian subgroup of $G$ such that $|H|$ is coprime to $p$, where $H$ acts faithfully and irreducibly on $P$, and $P$ is the unique minimal normal subgroup of $G$.
\end{reduction}
\begin{proof}
	Hall~\cite[Section~2]{MR1574393} showed that any solvable group $G$ of order $pq$, where $p$ and $q$ are coprime integers has at least one subgroup of order $q$.
	In the setting of Reduction~\ref{reduction:not-p}, $G$ has a complement $H$ to $P$.
	The kernel for the action of $H$ on $P$ is a normal subgroup of $G$ that does not contain $P$, so it must be trivial.
	Therefore, $H$ acts faithfully on $P$.
	If $P_1\subset P$ is a proper $H$-invariant subgroup, then $P_1$ is a normal subgroup of $G$ that does not contain $P$, so $P_1$ is trivial.
\end{proof}
Since $|H|>2$, it follows from Lemma~\ref{lemma:frobenius} that $G$ does not have multiplicity-free tensor products.
This completes the proof of the theorem.
\end{proof}
\subsection{Non-Abelian Finite Simple Groups}\label{sec:lie-type}
\begin{theorem}\label{theorem:finite-simple}
	No non-Abelian finite simple group has multiplicity-free tensor products.
\end{theorem}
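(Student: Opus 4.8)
The plan is to combine a simple necessary condition for having multiplicity-free tensor products with the classification of finite simple groups, which reduces the problem to a finite computation. The necessary condition is this: if $G$ has multiplicity-free tensor products and $V\in\hat G$ has largest dimension $d_{\mathrm{max}}(G)$, then $V\otimes V'$ is multiplicity-free, and since the trivial representation occurs in $V\otimes V'$ with multiplicity $\dim\Hom_G(V,V)=1$ we may write $V\otimes V'=\mathbf 1\oplus M$ with $M$ a direct sum of pairwise non-isomorphic non-trivial irreducibles; comparing dimensions,
\[
d_{\mathrm{max}}(G)^2-1=\dim M\le\sum_{\mathbf 1\neq W\in\hat G}\dim W ,
\]
so $d_{\mathrm{max}}(G)^2\le\sum_{W\in\hat G}\dim W$. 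Since $\hat G$ has $k(G)$ elements (where $k(G)$ is the number of conjugacy classes of $G$), each of dimension at most $d_{\mathrm{max}}(G)$, and $\sum_W(\dim W)^2=|G|$, this forces $d_{\mathrm{max}}(G)\le k(G)$ and hence $|G|\le k(G)^3$. One sees the last inequality directly as well: by Theorems~\ref{theorem:sum-of-squares} and~\ref{theorem:commutativity-kronecker-hecke}, multiplicity-freeness gives $|\conj_2(G)|=\sum_{(U,V,W)}\kappa(U,V,W)^2=\#\{(U,V,W)\in\hat G^3:\kappa(U,V,W)\ge1\}\le k(G)^3$, while Burnside's lemma~\eqref{eq:burnside} gives $|\conj_2(G)|\ge|G|$.

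Next I would invoke the classification of finite simple groups: a non-Abelian finite simple group is an alternating group $A_n$ with $n\ge5$, a simple group of Lie type, or one of the $26$ sporadic groups. The alternating groups --- and with them the exceptional isomorphisms $A_5\cong\mathrm{PSL}_2(4)\cong\mathrm{PSL}_2(5)$, $A_6\cong\mathrm{PSL}_2(9)\cong\mathrm{PSp}_4(2)'$, $A_8\cong\mathrm{PSL}_4(2)$, and so on --- are already excluded by Theorem~\ref{theorem:symmetric_and_alternating}. For the sporadic groups the condition $d_{\mathrm{max}}(G)^2\le\sum_W\dim W$ fails wildly (equivalently $|G|\gg k(G)^3$), as one reads off from the ATLAS; and for groups of Lie type \emph{other than} $\mathrm{PSL}_2(q)$ the standard estimates on the number of conjugacy classes --- polynomial in $q$ of degree essentially the Lie rank --- together with the order formula give $|G|>k(G)^3$ for every $q$, so none of these has multiplicity-free tensor products.

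The one family where $|G|\le k(G)^3$ does \emph{not} fail is $\mathrm{PSL}_2(q)$, since $|\mathrm{PSL}_2(q)|=q(q^2-1)<(q+1)^3$ always; here one must use the sharper inequality $d_{\mathrm{max}}(G)^2\le\sum_W\dim W$. The character degrees of $\mathrm{PSL}_2(q)$ are completely explicit, and a short computation gives $\sum_W\dim W\le q^2$, with equality exactly when $q$ is even --- in which case a character of degree $q+1$ is present, so $d_{\mathrm{max}}(G)=q+1$ and $(q+1)^2>q^2$; and when $q$ is odd $\sum_W\dim W$ equals $(q^2+q)/2$ or $(q^2+q+2)/2$ according to $q\bmod 4$, which is strictly below $q^2\le d_{\mathrm{max}}(G)^2$. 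Either way $d_{\mathrm{max}}(G)^2>\sum_W\dim W$, so $\mathrm{PSL}_2(q)$ has no multiplicity-free tensor products for any $q\ge4$, and every non-Abelian finite simple group is covered. (The smallest members of each Lie-type family can also be confirmed directly in GAP by exhibiting a triple with $\kappa(U,V,W)\ge2$, should one prefer not to trust the asymptotic estimates there.)

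The main obstacle is the groups of Lie type: one must assemble the per-family formulas and estimates for $|G|$, $k(G)$, $d_{\mathrm{max}}(G)$ and $\sum_W\dim W$ so as to cover all ranks and all $q$, and then identify precisely the short finite list that needs machine verification. Conceptually the cleanest target is the uniform assertion that $d_{\mathrm{max}}(G)^2>\sum_{W\in\hat G}\dim W$ for every non-Abelian finite simple group $G$; as usual the subtlety lies in the borderline cases --- here the family $\mathrm{PSL}_2(q)$ and the smallest members of the other Lie-type families --- whereas, somewhat counterintuitively, the sporadic groups are the easy case, precisely because their largest character degree is so enormous relative to the sum of all character degrees.
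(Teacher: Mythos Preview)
Your core necessary condition --- that multiplicity-free tensor products force $d_{\max}(G)^2\le\sum_{W\in\hat G}\dim W$, hence $d_{\max}(G)\le k(G)$ --- is exactly the paper's Lemma~\ref{lem:nmftp} (stated contrapositively), and your treatment of alternating and sporadic groups matches the paper's. So the strategy is the same; the only substantive difference is how the Lie-type case is dispatched.

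Here the paper is considerably more efficient. Instead of assembling per-family estimates for $|G|$, $k(G)$, $d_{\max}(G)$ as you propose, it observes two things at once: the Steinberg representation has dimension $|G|_p$, so $d_{\max}(G)\ge|G|_p$; and a single result of Fulman--Guralnick (Theorem~\ref{upperbound} in the paper) gives $k(G)<|G|_p$ for \emph{every} finite simple group of Lie type except $\mathrm{PSL}_2(q)$ with $q$ even and $\mathrm{PSL}_2(5)$, where one has $k(G)=q+1=d_{\max}$ and $k(G)=5<6=d_{\max}$ respectively. Thus $k(G)\le d_{\max}(G)$ uniformly, and Lemma~\ref{lem:nmftp} finishes the job with no family-by-family bookkeeping. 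Your route via $|G|>k(G)^3$ is correct in principle but coarser --- for instance $\mathrm{PSL}_3(2)$ has $|G|=168<216=k(G)^3$, so it only escapes via the isomorphism with $\mathrm{PSL}_2(7)$ --- and would require exactly the case-by-case labour you flag at the end. The Steinberg/Fulman--Guralnick combination is the missing shortcut.
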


We have already seen that alternating groups $A_n$, $n\geq 5$ do not have multiplicity-free tensor products (Theorem~\ref{theorem:symmetric_and_alternating}).
The character table of the sporadic simple groups are available in the GAP character table library~\cite{GAP-CTBL}.
Using these character tables it is easy to see that the sporadic simple groups do not have multiplicity-free tensor products.
It only remains to consider the finite simple groups of Lie type.
\begin{theorem}[{\cite[Lemma~2.1]{MR1489911}}]\label{upperbound}
	If $G$ is a finite simple group of Lie type in characteristic $p$, then either
	\begin{enumerate}
		\item 
		$|\conj_1(G)|<|G|_p$ or
		\item 
		$G=PSL_2(q)$, $q$ even, and $|\conj_1(G)|=q+1$, or 
		\item 
		$G=PSL_2(5)$, and $|\conj_1(G)|=5$.
	\end{enumerate}
\end{theorem}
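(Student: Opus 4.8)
The plan is to compare the class number $k(G):=\abs{\conj_1(G)}$ with $\abs{G}_p=q^N$, where $N$ is the number of positive roots of the ambient algebraic group $\mathbf{G}$, so that $q^N$ is the order of a maximal unipotent subgroup, i.e.\ of a Sylow $p$-subgroup. The decisive numerical fact is that for an irreducible root system of rank $r$ one always has $N\geq r$, with equality \emph{only} in type $A_1$. The argument therefore splits into a generic regime (rank $\geq 2$, where $N$ strictly exceeds $r$ and a degree-$r$ polynomial bound on $k(G)$ is comfortably beaten by $q^N$) and the genuinely sharp regime of type $A_1$, which is exactly what produces the stated exceptions.

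First I would establish a bound of the shape $k(G)\leq C\,q^r$ with $C$ depending only on the Lie type and not on $q$. The cleanest route is Jordan decomposition: writing each $g=su$ with $s$ semisimple, $u$ unipotent and $u\in C_{\mathbf{G}}(s)$, the conjugacy classes of $G$ are parametrised by pairs consisting of an $F$-stable semisimple class $[s]$ together with a unipotent class of $C_{\mathbf{G}}(s)^F$, up to the action of the relevant component group. Hence $k(G)=\sum_{[s]}u([s])$, where $u([s])$ is the number of unipotent classes in the centraliser. Two classical inputs control this sum: Steinberg's theorem, giving that the number of $F$-stable semisimple classes is a polynomial in $q$ of degree $r$ (equal to $q^r$ in the simply connected case), and the finiteness of unipotent classes, which bounds each $u([s])$ by a constant $u(\mathbf{G})$ depending only on the type. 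Since a finite simple group of Lie type is a central quotient (up to bounded index) of $\mathbf{G}_{sc}^F$, and the centre of $\mathbf{G}_{sc}^F$ has order bounded in terms of the type alone, the same $O(q^r)$ estimate descends to $G$; the twisted groups are handled by the analogous count.

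With $k(G)\leq C\,q^r$ in hand, the generic case is immediate: for rank $\geq 2$ one has $N-r\geq 1$, and in fact $N-r$ grows with the rank, so that $q^N=q^{N-r}\cdot q^r>C\,q^r$ for all but finitely many pairs $(\text{type},q)$. The finitely many residual low-rank, small-$q$ groups I would dispose of by direct computation of $k(G)$ from known class-number formulae or the GAP character-table library, exactly as is done for the sporadic groups earlier in this section. The one type the inequality cannot reach is $A_1$, where $N=r=1$, so $\abs{G}_p=q$ and the crude bound is useless; here the exact class numbers decide everything. For $PSL_2(q)$ with $q$ odd one has $k=(q+5)/2$, which is $<q$ precisely when $q>5$ and equals $q$ when $q=5$, giving exception (3); for $q$ even, $PSL_2(q)=SL_2(q)$ has $k=q+1>q$, giving exception (2). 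These two computations pin down the exceptional families.

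The main obstacle is not the generic inequality but the calibration at the boundary: the estimate $k(G)\leq C\,q^r$ is far too lossy to separate $k<q$ from $k=q$ or $k=q+1$, so the type-$A_1$ analysis, and the more tedious verification of the finitely many small-rank groups over small fields where $q^{N-r}$ has not yet overtaken $C$, must be carried out by hand or by machine. Making the constant $C$ small enough to shrink that finite list to a genuinely checkable size, and organising the component-group bookkeeping in the Jordan-decomposition count so the passage to the simple quotient stays clean, are the two places where real care is needed.
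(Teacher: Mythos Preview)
The paper does not prove this theorem at all: it is stated with the attribution \cite[Lemma~2.1]{MR1489911} and used as a black box in the proof of Theorem~\ref{theorem:lie-type}. There is therefore no ``paper's own proof'' to compare your proposal against.

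As for the proposal itself, the strategy is sound in outline. Bounding $k(G)$ by $O(q^r)$ via Jordan decomposition (Steinberg's count of semisimple classes plus Lusztig's finiteness of unipotent classes), and then exploiting $N>r$ for rank $\geq 2$, is exactly the kind of argument that underlies results of this type. You have also correctly identified that type $A_1$ is the genuine boundary case and handled the $PSL_2(q)$ class numbers accurately. The one place where the sketch is honest but incomplete is the ``finitely many residual low-rank, small-$q$ groups'' step: turning the asymptotic inequality $Cq^r<q^N$ into a clean statement with no leftover exceptions beyond those listed requires either a sharper explicit constant $C$ (type by type) or a substantial finite check, and this is where most of the actual work in the cited reference lies. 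Your proposal acknowledges this but does not carry it out, so as written it is a correct plan rather than a complete proof.
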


For a finite group $G$, let $d_G:=\max\{\dim V \mid V \in \hat G \}$ and $V_{\mathrm{max}}$ be an irreducible representation of dimension $d_G$.
\begin{lemma}\label{lem:nmftp}
	If a non-Abelian finite group $G$ satisfies $|\conj_1(G)| \leq d_G$, then $V_{\mathrm{max}} \otimes V_{\mathrm{max}}$ is not multiplicity-free. In particular, $G$ does not have multiplicity-free tensor products.
\end{lemma}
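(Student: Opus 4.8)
The plan is a short dimension count, carried out by contradiction. Assume that $V_{\mathrm{max}}\otimes V_{\mathrm{max}}$ is multiplicity-free. Decomposing it into irreducibles, it is then a direct sum of pairwise non-isomorphic elements of some subset $S\subseteq\hat G$, so that
\[
	d_G^2 = \dim(V_{\mathrm{max}}\otimes V_{\mathrm{max}}) = \sum_{W\in S}\dim W .
\]
The first step is to bound the right-hand side from above. Each $\dim W\le d_G$ by the definition of $d_G$, and $|S|\le|\hat G|=|\conj_1(G)|$, the last equality being the $d=1$ case of Theorem~\ref{theorem:sum-of-squares} (equivalently, the classical fact that a finite group has as many irreducible representations as conjugacy classes). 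Hence $d_G^2\le|\conj_1(G)|\,d_G$, i.e. $d_G\le|\conj_1(G)|$.

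Next I would feed in the hypothesis $|\conj_1(G)|\le d_G$. Together with the previous inequality this forces equality all the way along the chain $d_G^2\le\sum_{W\in\hat G}\dim W\le|\conj_1(G)|\,d_G\le d_G^2$. In particular $\sum_{W\in\hat G}\dim W=|\hat G|\,d_G$, and since every summand is at most $d_G$ this is possible only if $\dim W=d_G$ for \emph{every} $W\in\hat G$.

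Finally I would extract the contradiction: the trivial representation is an element of $\hat G$ of dimension $1$, so $d_G=1$; but a non-Abelian group admits an irreducible representation of dimension at least $2$, so $d_G\ge 2$. This contradiction shows that $V_{\mathrm{max}}\otimes V_{\mathrm{max}}$ is not multiplicity-free, and taking $U=V=V_{\mathrm{max}}$ in the definition then shows that $G$ does not have multiplicity-free tensor products.

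I do not expect a genuine obstacle here — the argument is elementary. The two points needing a little care are: (i) recording that ``$V_{\mathrm{max}}\otimes V_{\mathrm{max}}$ is multiplicity-free'' means its decomposition has \emph{distinct} irreducible summands, so the dimension count uses $\sum_{W}\dim W$ and not $\sum_{W}(\dim W)^2$; and (ii) arranging the inequalities so that the two-sided squeeze genuinely forces \emph{every} irreducible to have dimension $d_G$, which is exactly what turns the numerical coincidence $|\conj_1(G)|=d_G$ into the structural conclusion that collapses $G$ to an Abelian group.
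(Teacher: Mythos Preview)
Your proof is correct and follows essentially the same dimension-counting argument as the paper. The only cosmetic difference is that the paper derives the strict inequality $\sum_{W\in\hat G}\dim W < d_G|\hat G|$ directly from $d_G>1$ (so that $\dim(V_{\max}\otimes V_{\max})$ already exceeds the sum of all irreducible dimensions), whereas you set up a two-sided squeeze and extract the same contradiction from the equality case; the underlying idea is identical.
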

\begin{proof}
	Assume that $|\conj_1(G)| \leq d_G$. Since $d_G>1$ for any non-abelian group $G$, we have
	\[
	\sum_{V \in \hat G} \dim V < d_G |\hat G| \leq d_G^2. 
	\]
Therefore, $V_{\mathrm{max}} \otimes V_{\mathrm{max}}$ is not multiplicity-free.
\end{proof}
\begin{theorem}\label{theorem:lie-type}
No finite simple group of Lie type has multiplicity-free tensor products.
\end{theorem}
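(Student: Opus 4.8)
The plan is to combine the Steinberg representation with Theorem~\ref{upperbound} and the elementary counting in Lemma~\ref{lem:nmftp}. The crucial input is the classical fact (due to Steinberg) that a finite group $G$ of Lie type in characteristic $p$ possesses the irreducible Steinberg representation $\St$, whose dimension is exactly $|G|_p$, the order of a Sylow $p$-subgroup of $G$. Consequently $d_G \geq |G|_p$. Since a non-Abelian simple group has $d_G > 1$, Lemma~\ref{lem:nmftp} will apply as soon as we know $|\conj_1(G)| \leq d_G$.

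First I would handle the generic case~(1) of Theorem~\ref{upperbound}: if $|\conj_1(G)| < |G|_p$, then $|\conj_1(G)| < |G|_p \leq d_G$, so by Lemma~\ref{lem:nmftp} the representation $V_{\mathrm{max}} \otimes V_{\mathrm{max}}$ is not multiplicity-free, and $G$ does not have multiplicity-free tensor products.

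It then remains to treat the two exceptional families in Theorem~\ref{upperbound}. In case~(2), $G = PSL_2(q)$ with $q$ even; simplicity forces $q \geq 4$, and $|\conj_1(G)| = q+1$. I would exhibit a principal series irreducible representation of $G = SL_2(q)$ of dimension $q+1$: induce to $G$ a non-trivial linear character of a Borel subgroup $B$ of index $q+1$, trivial on the unipotent radical; the result is irreducible because $q-1$ is odd, so no non-trivial character of the torus of order $q-1$ coincides with its inverse. Hence $d_G \geq q+1 = |\conj_1(G)|$, and Lemma~\ref{lem:nmftp} applies. In case~(3), $G = PSL_2(5) \cong A_5$ (which is also $PSL_2(4)$, hence already an instance of case~(2)); here $|\conj_1(G)| = 5$ and $A_5$ has an irreducible representation of dimension $5$, so again $|\conj_1(G)| \leq d_G$ and Lemma~\ref{lem:nmftp} applies.

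I do not expect a genuine obstacle. The only points needing care are the two standard facts invoked: that $\dim \St = |G|_p$ for groups of Lie type, and the shape of the irreducible character degrees of $PSL_2(q)$ for $q$ even (in particular that the principal series characters of degree $q+1$ are irreducible and that $|\conj_1(PSL_2(q))| = q+1$). A peripheral remark: if the Tits group ${}^2F_4(2)'$ is counted among the groups of Lie type, it is covered by case~(1) of Theorem~\ref{upperbound} since $|\conj_1(G)| = 22 < |G|_2$; otherwise it has already been handled via the GAP character table library.
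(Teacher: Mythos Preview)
Your proposal is correct and follows essentially the same approach as the paper: use the Steinberg representation to get $d_G \geq |G|_p$, invoke Theorem~\ref{upperbound} for the generic bound $|\conj_1(G)| < |G|_p$, and then check the two exceptional cases $PSL_2(q)$ ($q$ even) and $PSL_2(5)$ directly so that Lemma~\ref{lem:nmftp} applies throughout. Your treatment of the exceptional cases is in fact slightly more careful than the paper's (you justify the existence of a degree $q+1$ irreducible via the principal series, and you correctly note $d_{A_5}=5$ rather than the paper's $d_G=6$, which is a harmless slip since only $|\conj_1(G)|\leq d_G$ is needed).
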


\begin{proof}
	Suppose $G$ is a finite simple group of Lie type other than $PSL_2(q)$, $q$ even and $PSL_2(5)$.
	Let $\St$ denote the Steinberg representation of $G$. For any $g\in G$, the value of the character $\chi$ of $\St$ is given by
	\begin{equation}\label{eq:Steinberg-character}
		\chi(g) = \pm |Z_G(g)|_p,
	\end{equation}
	the $p$-part of the cardinality of the centralizer of $g$.
	In particular, the Steinberg representation has dimension equal to the order $|G|_p$ of a Sylow $p$-subgroup of $G$.

	We have $d_G \geq \dim (\St)=|G|_p $. By Theorem~\ref{upperbound} we get  $|\conj_1(G)|<|G|_p \leq d_G$.
	
	When $G=PSL_2(q)$, $q$ even, we have $\conj_1(G)=q+1$ and $d_G=q+1$. For $G=PSL_2(5)$, $\conj_1(G)=5$ and $d_G=6$. 
	
	In all the above cases $|\conj_1(G)| \leq d_G$, so $G$ does not have multiplicity-free tensor products by Lemma~\ref{lem:nmftp}.
\end{proof}
\subsection{General Linear Groups}\label{sec:general-linear}
\begin{theorem}
	$GL(n,q)$ does not have multiplicity-free tensor products for $n\geq 2$ except when $n=2$ and $q=2$.
\end{theorem}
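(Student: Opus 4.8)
The plan is to separate the cases $n\ge 3$ and $n=2$, the latter being where the real work lies; the exception is explained by $GL_2(2)\cong S_3$, which has multiplicity-free tensor products by Theorem~\ref{theorem:symmetric_and_alternating}. For $n\ge 3$ I would apply Lemma~\ref{lem:nmftp} to the non-Abelian group $GL_n(q)$, so it suffices to check $|\conj_1(GL_n(q))|\le d_{GL_n(q)}$. The Steinberg representation of $GL_n(q)$ is irreducible of dimension equal to the $p$-part $|GL_n(q)|_p=q^{\binom n2}$ of the group order, so $d_{GL_n(q)}\ge q^{\binom n2}$. On the other hand, $|\conj_1(GL_n(q))|$ is a monic polynomial in $q$ of degree $n$ with $|\conj_1(GL_n(q))|<q^n$ for every prime power $q$ --- this is classical and can be extracted from the generating function $\sum_{n\ge 0}|\conj_1(GL_n(q))|\,t^n=\prod_{d\ge 1}(1-t^d)/(1-qt^d)$, or simply cited. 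Since $n\le\binom n2$ whenever $n\ge 3$, this gives $|\conj_1(GL_n(q))|<q^n\le q^{\binom n2}\le d_{GL_n(q)}$, and Lemma~\ref{lem:nmftp} applies. (In the finitely many cases with $n\ge 3$ where $GL_n(q)$ is itself simple, such as $GL_3(2)$ or $GL_4(2)$, this also follows from Theorem~\ref{theorem:finite-simple}.)

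For $n=2$ and $q\ge 3$, Lemma~\ref{lem:nmftp} is useless because $|\conj_1(GL_2(q))|=q^2-1$ exceeds $d_{GL_2(q)}=q+1$, and one cannot reduce to a quotient either --- for instance every proper quotient of $GL_2(3)$, namely $C_2$, $S_3$ and $S_4$, has multiplicity-free tensor products. So I would argue directly with principal series representations. Let $B$ be the Borel subgroup of upper-triangular matrices, $T$ its diagonal torus; for characters $\alpha,\beta$ of $\Fq^\times$ write $(\alpha,\beta)$ also for the character $\diag(a,d)\mapsto\alpha(a)\beta(d)$ of $T$ inflated to $B$, and $\pi(\alpha,\beta)=\Ind_B^{GL_2(q)}(\alpha,\beta)$, of dimension $q+1$. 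Since $q\ge 3$ there exist characters $\alpha\ne\beta$, and for these $\pi(\alpha,\beta)$ is irreducible. Computing $\Res_B\pi(\alpha,\beta)$ via Mackey's formula (there being two double cosets $B\backslash GL_2(q)/B$, indexed by the Weyl group) and using the projection formula yields
\[
  \pi(\alpha,\beta)\otimes\pi(\alpha,\beta)\;\cong\;\Ind_B^{GL_2(q)}(\alpha^2,\beta^2)\ \oplus\ (\alpha\beta\circ\det)\otimes\CC[GL_2(q)/T].
\]
Now $\CC[GL_2(q)/T]=\Ind_T^{GL_2(q)}\mathbf 1$, so by Frobenius reciprocity the multiplicity of the Steinberg representation $\St$ in it equals $\dim\St^{T}$; since $T$ has exactly two fixed points and one orbit of size $q-1$ on $\mathbf{P}^1(\Fq)$, we get $\dim\CC[\mathbf{P}^1(\Fq)]^{T}=3$, whence $\dim\St^{T}=2$ because $\CC[\mathbf{P}^1(\Fq)]\cong\mathbf 1\oplus\St$. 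Therefore $(\alpha\beta\circ\det)\otimes\St$ occurs in $\pi(\alpha,\beta)\otimes\pi(\alpha,\beta)$ with multiplicity at least $2$, so $GL_2(q)$ does not have multiplicity-free tensor products.

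The hard part is precisely the case $n=2$, $q\ge 3$: the soft counting criterion of Lemma~\ref{lem:nmftp} simply fails (there are too many conjugacy classes relative to the top irreducible dimension $q+1$) and no quotient helps, so one is pushed into the explicit representation theory of $GL_2(q)$ and the tensor-product decomposition above --- which is classical and could equally well be cited from the literature on $GL_2(q)$. By contrast, the case $n\ge 3$ is routine once one grants the standard estimate $|\conj_1(GL_n(q))|<q^n$, which it is cleanest to cite.
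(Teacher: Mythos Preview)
Your proof is correct but follows a different route from the paper's. For $n\ge 3$ the paper cites a result of Letellier that $\langle\St\otimes\St,\St\rangle$ is a monic polynomial in $q$ of degree $(n-1)(n-2)/2$ with non-negative coefficients, hence $\ge 2$ once $n\ge 3$; you instead reuse Lemma~\ref{lem:nmftp} together with the standard estimate $|\conj_1(GL_n(q))|<q^n\le q^{\binom n2}=\dim\St$, which is a nice economy since it recycles machinery already in the paper rather than invoking a new external input. For $n=2$ the paper simply cites explicit Kronecker-coefficient tables for $GL_2(q)$ from the literature, whereas you give a self-contained argument via the projection formula and the observation $\dim\St^T=2$, yielding multiplicity $\ge 2$ for $(\alpha\beta\circ\det)\otimes\St$ in $\pi(\alpha,\beta)^{\otimes 2}$; this is more informative and avoids a second citation. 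Both approaches are short, but yours is more internal to the paper and makes the $n=2$ obstruction concrete.
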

\begin{proof}
	Lettelier~\cite[Proposition~31]{Emmanuel_Letellier_2013} proved that $\langle \St\otimes \St, \St\rangle$ is a monic polynomial in $q$ of degree $(n-1)(n-2)/2$ with non-negative integer coefficients.
	If $n\geq 3$, then this polynomial has positive degree hence takes values that are greater than or equal to $2$, so $GL(n,q)$ is not multiplicity-free for $n\geq 3$.
	For $n=2$, the result follows from the explicit computation of Kronecker coefficients in~\cite{gupta2023tensor}.
\end{proof}

\subsection*{Acknowledgements}
We thank A.~P.~Balachandran for bringing the work of Ben Geloun and Ramgoolam to our attention.
We thank Sanjaye Ramgoolam for sharing his work with us and some very helpful discussions.
We thank Steven Spallone for his comments on the manuscript.
We thank Archita Gupta and Pooja Singla for help with GAP calculations.
\printbibliography
\end{document}